\pgfplotsset{
        table/search path={../},
    }
\newcommand*{\mesh}{{\ensuremath{\mathcal{T}_h}}}
\newcommand*{\elem}{\ensuremath{T}}
\newcommand*{\face}{\ensuremath{F}}
\newcommand*{\facets}{{\ensuremath{\mathcal{F}_h}}}
\newcommand*{\interpol}{\ensuremath{\mathcal I}}
\newcommand{\mean}[1]{\{\!\!\{ #1 \} \!\!\}}
\newcommand{\jump}[1]{[\![ #1 ] \!]}
\newcommand{\eps}{\varepsilon}
\newcommand*{\IR}{\mathbb R}
\newcommand*{\IP}{\mathbb P}
\newcommand{\uu}{\underline{u}}
\newcommand*{\dx}{\ensuremath{\; \textup d\boldsymbol{x}}}
\newcommand*{\ds}{\ensuremath{\; \textup d\sigma}}
\newcommand*{\normal}{\ensuremath{\boldsymbol{n}}}
\newcommand*{\uD}{\ensuremath{u_{\mathrm{D}}^{}}}
\newcommand*{\bx}{\boldsymbol{x}}
\newtcolorbox{outerbox}{colback=black!5!white, enhanced, frame hidden, borderline west = {1pt}{-1pt}{black,dashed}}
\newtcolorbox{innerbox}[1][]{colback=white, enhanced, frame hidden,width=0.95\textwidth, before = \hspace{0.3cm},borderline west = {1pt}{-1pt}{black,dashed}}
\definecolor{lateksii_color}{RGB}{155,0,119}
\title{A bound-preserving and conservative enriched Galerkin method for elliptic problems\thanks{G.R. Barrenechea has been supported by the Leverhulme Trust through the Research Project Grant  No. RPG-2021-238. 
A.\ Rupp has been supported by the Research Council of Finland's decision numbers 350101, 354489, 359633, 358944.}}
\author{Gabriel R. Barrenechea\thanks{Department of Mathematics and Statistics, University of Strathclyde, Glasgow, Scotland 
  (\email{gabriel.barrenechea@strath.ac.uk}).}
  \and Philip L.\ Lederer \thanks{
    Department of Mathematics, Faculty
of Mathematics, Informatics and Natural Sciences, University of Hamburg,
Hamburg, Germany (philip.lederer@uni-hamburg.de)}
\and Andreas Rupp
\thanks{
    Department of Mathematics, Saarland University, 66123 Saarbrücken, Germany (andreas.rupp@uni-saarland.de)}
  }
\begin{document}
\maketitle
\begin{abstract}
 \noindent We propose a locally conservative enriched Galerkin scheme that preserves the physical bounds for an elliptic problem. To this end, we use a substantial over-penalization of the discrete solution's jumps to obtain optimal convergence. To avoid the ill-conditioning issues that arise in over-penalized schemes, we introduce an involved splitting approach that separates the system of equations for the discontinuous solution part from the system of equations for the continuous solution part, yielding well-behaved subproblems. We prove the existence of discrete solutions and optimal error estimates, which are validated numerically.
\end{abstract}

\begin{keywords}
 Maximum principle, bound preservation, local conservation, enriched Galerkin method
\end{keywords}

\begin{MSCcodes}
65N30, 65N15, 65N12, 35J25, 35B50
\end{MSCcodes}

\section{Introduction}
Physically-consistent numerical methods have become a key focus in the numerical analysis of partial differential equations (PDEs). These methods aim to preserve specific properties of the exact solutions, such as local conservation laws, entropy conditions, maximum principles, or positivity constraints. Such properties are critical for ensuring that numerical simulations of complex phenomena remain physically accurate and stable. 

One particular type of physically consistent method that has received much attention are those that stay within an acceptable set of values, that is, where the discrete solution does not attain values that the continuous one would not. This is usually carried out by developing discretizations that satisfy the so-called \textit{Discrete Maximum Principle (DMP)}, which essentially states that the discrete solution remain within the bounds of the continuous one and does not have any local extrema that are not present in the continuous solution. This is not a new idea, and in fact in the finite element literature can be traced back to the work of Ciarlet \& Raviart \cite{CR73}. Since then, many different approaches have been proposed, and herein we refert to \cite{Kik77,MH85,BE05,BKK08,BBK17} for some representative methods that respect the DMP in the elliptic case, and to \cite{BJK25} for a very recent review on the topic.

Now, the DMP is sometimes difficult to achieve, as in fact its validity is usually linked to mesh restrictions, and in some situations might even be too strict a requirement. In fact, sometimes a discretization that is just \textit{bound-preserving}, that is, once whose solution remains within the \textit{invariant domain} (i.e., the set of values that the continuous solution takes), is sufficient to guarantee stability of a simulation. In addition, methods that focus on bound preservation are less restrictive and, thus, potentially more efficient for various applications. The importance of bound-preserving methods is underscored in scenarios where, e.g., positivity is required to maintain the validity of a physical model. For instance, negative solutions would be non-physical in reaction-diffusion systems or turbulence models, and phase-field models introduce variables that need to belong to the interval $[-1,1]$. A stable numerical simulation can be built only by ensuring this requirement, without the need to fulfill the stronger restrictions demanded by the DMP.

In the recent work \cite{BarrenecheaGPV23}, the idea of bound-preservation at the nodes was presented for an elliptic reaction-diffusion problem. The core idea involves choosing a baseline discretization of the PDE at hand and combining it with a projection operator that maps finite element functions to their positive (well-behaved) counterparts, ensuring that numerical solutions remain within the required bounds. As anticipated by Godunov's barrier theorem, the method is nonlinear. For symmetric linear problems, the solution is sought as an orthogonal projection of the exact solution onto the convex set of finite element functions with positive nodal values. This work has been recently extended to convection-dominated problems in \cite{AmiriBP23}, using continuous finite element spaces, and to time-dependent problems using discontinuous Galerkin in \cite{BPT25}. A common feature of all the above works is the fact that they can be proven to be equivalent to solving a variational inequality over the constrained convex set (in fact, the method presented in \cite{BPT25} is written directly in such a way, without the link to a stabilized method). This fact prevents the method from being conservative (either locally or globally), a very desirable property in, for example, time-dependent transport problems.

Based on the discussion in the above paragraph, in this work \textbf{we present a method that preserves the bounds of the continuous solution while at the same time is locally (and globally) conservative.} The baseline discretization is the Enriched Galerkin (EG) method, in which the constraints are encoded via Lipschitz-continuous projections. The definition of the projection is one of the main differences with the previous works \cite{BarrenecheaGPV23,AmiriBP23,BPT25}. In fact, while in those papers the whole finite element function is restricted to belong to the acceptable interval, in the present work we split the finite element solution into its linear and constant parts. Then, the piecewise constant part remains untouched, and it is used to build the projection that will take the piecewise linear into  a suitable interval. As such, the stabilization needed to remove the kernel induced by the projection vanishes in the piecewise constant functions, which guarantees local and global conservation. The method is proven to achieve near-best approximation properties in suitable error norms. In addition, this approach enables the use of simple solvers, such as Richardson-like iterations, thereby avoiding more complex constrained optimization techniques commonly employed in the existing literature \cite{EHS09,Bochev20,MN16}. Moreover, to the authors' knowledge, this is the first work that combines bound-preservation and local conservation with a rigorous error analysis.

The enriched Galerkin (EG) methods use the discontinuous Galerkin (DG) bilinear form and a combination of overall continuous and broken polynomial spaces to inherit the polynomial best approximation property from continuous finite elements and other beneficial properties, such as enhanced stability and local mass conservation from discontinuous Galerkin methods, while maintaining fewer degrees of freedom than DG. The first enriched Galerkin method was proposed by Becker et al.\ \cite{BeckerBHL03}, but these methods only gained popularity after reinvention by Lee et al.\ \cite{LeeLW16}. Since then, the EG method has been continuously generalized and applied to many problems. Its main strengths stem from its flexibility concerning the polynomial spaces that allow, for example, higher-order or spatially adaptive enrichments \cite{RuppHA20,RuppL20}. This flexibility allows us to derive, for instance, locking-free and physics-preserving EG methods for poroelasticity \cite{LeeY23,YiL24} or efficient EG-based solvers for the Stokes problem \cite{YiHLA22}. However, this flexibility obstructs bound-preserving approximations even in the lowest-order cases. This issue has been addressed in \cite{KuzminHR20,KuzminH23,KuzminLY24} for hyperbolic problems and will be addressed for elliptic problems in the current work.

One feature of our work is the fact that, due to the particular choice of how the bounds are imposed in the discrete solution, the jump terms in the EG formulation need to be over-penalized. This is not a new concept; in fact, several works have explored the over-penalization of DG methods, such as the WOPSIP method \cite{BOS08}. Now, it is a well-known fact that over-penalized methods lead to very ill-conditioned linear systems of equations. Consequently, in \cite{BOS08}, a preconditioner has also been introduced to address this issue. To completely circumvent the ill-conditioning, this work exploits the fixed-point strategy used to prove the existence of solutions to propose a decoupled scheme that completely avoids solving ill-conditioned linear systems, thereby eliminating the need to propose appropriate preconditioners. This insight seems to be new even for linear EG methods. 

Several numerical experiments demonstrate the effectiveness of the proposed method compared to standard finite element approaches. These experiments show that the method preserves critical structural properties of the exact solution while maintaining computational efficiency. Similar methods that focus on bound preservation, such as truncation-based approaches \cite{Lu2013TheCM,Kreuzer,YYZ22}, have been previously explored. However, the method presented here offers a fresh perspective on the formulation and numerical solution of bound-preserving problems.

The remainder of this contribution is structured as follows: Section \ref{SEC:model_prob} delves into the model problem and the (not bound-preserving) over-penalized enriched Galerkin discretization. In contrast, Section \ref{sec:postpreserv} turns this discretization into a bound-preserving one. Section \ref{SEC:existence} is devoted to proving the existence of discrete and positive solutions, while Section \ref{SEC:conbergence} discusses their convergence properties to the analytic solution. Finally, Section \ref{SEC:numerics} demonstrates how our method can be implemented and analyzes the condition of the linear system of equations that needs to be solved internally before a short conclusion wraps up this work.

\section{Model problem and baseline discretization}\label{SEC:model_prob}
%
We will adopt standard notation on Sobolev and Lebesgue spaces, aligned with, e.g., \cite{EG21-I}. For $D\subseteq\mathbb{R}^{d}$, $d\le 3$, we denote by $\|\cdot\|_{0,p,D} $ the $L^{p}(D)$-norm; when $p=2$ the subscript $p$
will be omitted and we only write $\|\cdot\|_{0,D} $. In addition, for
$s\geq 0$, $p\in [1,\infty]$, we denote by $\| \cdot \|_{s,p,D}$ ($|
\cdot |_{s,p,D}$) the norm (seminorm) in $W^{s,p}(D)$; when $p=2$, we
define $H^{s}(D)=W^{s,2}(D)$, and again omit the subscript $p$ and only write $\|\cdot \|_{s,D}$
($| \cdot |_{s,D}$).  The following space will also be used repeatedly within the text
\begin{align}
	H^{1}_{0}(D)=\left\{ v \in H^1(D) : v = 0 \; {\rm on} \; \partial D \right\}. \label{space}
\end{align}

Let $\Omega \subset \IR^d, d=2,3$ be a 
polyhedral, bounded, Lipschitz domain, $f \in L^2(\Omega)$,
$\uD \in C^0(\partial \Omega)$,
and let $\epsilon, \mu \in \mathbb{R}^+$. We consider the following elliptic reaction-diffusion equation: Find  $u \in H^1(\Omega)$ such that $u = \uD$ on $\partial \Omega$, and
\begin{equation}\label{EQ:cont_prob}
 \int_\Omega \epsilon \nabla u \cdot \nabla v \dx + \int_\Omega \mu u v \dx = \int_\Omega f v \dx,
\end{equation}
holds for all $v \in H^1_0(\Omega)$. Using Lax-Milgram's Lemma (see, e.g., (see, e.g., \cite[Lem.~25.2]{EG21-II}), this variational problem can be shown to have a unique solution. In addition, using the comparison principle (see \cite[Cor.~4.4]{RenardyR04}), then $u$ satisfies
\begin{equation}\label{Eq:comparison}
\|u\|_{0,\infty,\Omega}\le \tilde{U}:=\max\left\{\frac{\|f\|_{0,\infty,\Omega}^{}}{\mu}, \|u_D\|_{0,\infty,\partial\Omega}^{}\right\}\,.
\end{equation}
In addition, if $f\ge 0$ and $u_D\ge0$ we furthermore have that $u\ge 0$. That is, equation \eqref{EQ:cont_prob} respects (physically meaningful) bounds for its solution.

We now describe the baseline discretization of \eqref{EQ:cont_prob}. The choice made in this work is the EG method, proposed in \cite{BeckerBHL03,LeeLW16}. For this, we consider a shape-regular family of conforming simplicial triangulations $(\mesh)_{h > 0}$. Only to avoid technical diversions and simplify the notations and some of the proofs, we will assume that the family of triangulations is quasi-uniform. For a triangulation $\mesh$, the set of its facets is denoted by $\facets$. For any $\elem \in \mesh$, $h_\elem = \operatorname{diam} \elem$, while $h_\face = \operatorname{diam}\face$ for all $\face \in \facets$, $h = \max_{\elem \in \mesh} h_\elem$, and $h_{\min} = \min_{\elem \in \mesh} h_\elem$. In addition, we denote by $\bx_1,\ldots,\bx_n$ the internal nodes of $\mesh$ and for $i\in\{1,\ldots,n\}$ and $T\in\mesh$ we define the neighborhoods
\begin{equation*}
    \omega_i=\{T\in\mesh: \bx_i\in T\}\quad\textrm{and}\quad\omega_T=\{T'\in\mesh: T\cap T'\not=\emptyset\}\,.
\end{equation*}

The finite element space in the EG method is given by $V_h=V_h^1\oplus V_h^0$, where 
\begin{align*}
\tilde{V}_h^1 &= \{ v \in C(\overline{\Omega}) \colon v|_\elem \in \IP_1(\elem) \; \forall \elem \in \mesh \}, \\
 V_h^1 &= \tilde{V}_h^1\cap  H^1_0(\Omega)\,, \\
 V_h^0 &= \{ v \in L^2(\Omega) \colon v|_\elem \in \IP_0(\elem) \; \forall \elem \in \mesh \}.
\end{align*}
Since $V_h \not \subset H^1(\Omega)$, the EG method is not conforming. Still, $V_h$ is a subset of the broken (linear) polynomial space used in DG methods. Thus, we can use a variant of the DG bilinear and linear forms to define the EG approximate $u_h \in V_h$. To this end, let us define the jump and average operators. If $\face \in \facets$ connects two elements $\elem^+, \elem^- \in \mesh$, i.e., 
$\face = \elem^+ \cap \elem^-$,the average and jump of $v_h \in V_h$ are defined by
\begin{equation*}
 \mean{ v_h } = \tfrac12 v_h|_{\elem^+} + \tfrac12 v_h|_{\elem^-} \qquad \text{ and } \qquad \jump{v_h} = v_h|_{\elem^+} \normal^+ + v_h|_{\elem^-} \normal^-,
\end{equation*}
where $\normal^\pm$ denotes the outward pointing unit normal of element $\elem ^\pm$. Note that the jump turns a scalar function into a vector-valued quantity. If $\face \subset \partial \Omega \cap \elem$ for some $\elem \in \mesh$, we use
\begin{equation*}
 \mean{ v_h } = v_h \qquad \text{ and } \qquad \jump{v_h} = v_h \normal,
\end{equation*}
where $\normal$ is the outward pointing unit normal to $\partial\Omega$. Moreover, we need $L^2$-type scalar products in the mesh's bulk and skeleton, i.e., concerning $\mesh$ and $\facets$. We define
\begin{equation*}
 (w_h, v_h) = (w_h, v_h)_\mesh = \sum_{\elem \in \mesh} \int_\elem w_h v_h \dx \quad \text{ and } \quad \langle \eta_h, \sigma_h \rangle = \langle \eta_h, \sigma_h \rangle_\facets = \sum_{\face \in \facets} \int_\face \eta_h \cdot \sigma_h \ds,
\end{equation*} 
for $w_h,v_h \in V_h$ and face-wise defined vectors $\eta_h, \sigma_h$. Importantly, $(\cdot, \cdot)_\mesh$ trivially extends to expressions like $(\nabla w_h, \nabla v_h)_\mesh$ if we reinterpret the product in its definition. In addition, we define the norms
\begin{equation}\label{Def:Disc-Norms}
 \| v_h \|^2_{\mesh} = (v_h, v_h)_\mesh \qquad \text{ and } \qquad 
 \| \eta_h \|^2_{\facets} = \sum_{\face \in \facets} \frac{\epsilon+\mu h_\face^2}{h_\face} \int_\face \eta_h \cdot \eta_h \ds.
\end{equation}

Using the above notation, the baseline EG method used in this work is defined as: Find $u_h\in V_h$ such that
\begin{subequations}\label{EQ:eg_discretization}
\begin{equation}
 a_h(u_h, v_h) = b_h(v_h) \qquad \text{ for all } v_h \in V_h,
\end{equation}
where $a_h$ denotes the symmetric interior penalty DG bilinear form and $b_h$ incorporates the right-hand side $f$ and the boundary data, i.e.,
\begin{align}
 a_h(w_h, v_h) &= (\epsilon \nabla w_h, \nabla v_h) + (\mu w_h, v_h)
 - \langle \mean{\epsilon \nabla w_h}, \jump{v_h} \rangle - \langle \mean{\epsilon \nabla v_h}, \jump{w_h} \rangle\nonumber\\
 &\qquad + \left\langle \gamma \frac{\epsilon+\mu h_\face^2}{h_\face^{\beta}} \jump{w_h},\jump{v_h} \right\rangle,\\
 b(v_h) &= (f,v_h)- a_h(\tilde{u}_{D,h}^{}, v_h).
\end{align}
\end{subequations}
Above, $\tilde{u}_{D,h}^{}\in\tilde{V}_h^1$ is an extension of the Lagrange interpolant of the boundary datum inside the domain, $\beta\in\mathbb{N}$, $\gamma=\gamma_0^{}L_\Omega^{\beta-1}$, where $\gamma_0^{}>0$ is a non-dimensional penalty parameter, and $L_\Omega^{}$ is a characteristic length of $\Omega$ (for example, it can be taken as the diameter of $\Omega$).

Strictly speaking, \eqref{EQ:eg_discretization} is not the standard EG formulation as our EG version enforces the element-wise constant component \( u^0_h \) to approximate a homogeneous Dirichlet boundary condition (via Nitsche's method, as enforced in DG schemes), while the overall continuous solution component \( u_h^1 \) is set to zero strongly on the boundary (since  \( V^1_h\subset H^1_0(\Omega)\)). Eventually, we have that $u_h + \tilde{u}_{D,h}^{}\approx u$. For a standard EG discretization, the choice of extension $\tilde{u}_{D,h}^{}$ of the boundary datum is irrelevant, but due to its nonlinearity,  for the method presented below it is simpler to follow the approach given in \cite[Sec.~4.1]{AmiriBP23} and take $\tilde{u}_{D,h}^{}$ to vanish in all the interior nodes. This approach is also useful as we want to enforce that \( u_h + \tilde{u}_{D,h}^{} \) preserves the prescribed bounds at all (interior) nodes, and this is simplified by the present choice for  \( \tilde{u}_{D,h}^{}  \).

The aforementioned treatment of the boundary conditions allows the solution component \( u^0_h \) to converge to zero rapidly, which will be a key aspect of our analysis. Thereby, \( u^0_h \) is only needed to establish the local mass conservation, but does not contribute to the convergence of the method.

\begin{remark}[Properties of the EG method]\label{rem::prop_EG}
 \begin{enumerate}
 \item In the standard EG (and DG) literature, the choice is $\beta = 1$. Nevertheless, it will become clear in the analysis presented below that we will need larger values for $\beta$. So, for the moment, we only state that $\beta\ge 1$. 
 \item Observing that $V_h$ is a subspace of the standard DG space, and that $a_h(\cdot,\cdot)$ is an elliptic bilinear form in the DG space, and so it is elliptic in $V_h$ as well. More precisely, using standard arguments (see, e.g., \cite{PietroE12}), it can be  shown that if $\gamma$ is large enough, there exists $c_0>0$, independent of $h$ and any physical constant, such that
  \begin{equation}\label{Eq:a-elliptic}
 a_h(v_h,v_h)\ge c_0\Big(\|\epsilon^{1/2}\nabla v_h\|^2_{0,\Omega}+\| \mu^{1/2}v_h\|_{0,\Omega}^2+\sum_{F\in\facets}\gamma\frac{\epsilon+\mu h_\face^2}{h_F^\beta}\|\jump{v_h}\|_{0,F}^2\Big)=: c_0^{}\|v_h\|_{a_h}^2\,.
  \end{equation}
  Then, the well-posedness of \eqref{EQ:eg_discretization} follows.
  \item\label{IT:eg_conserv} The  method \eqref{EQ:eg_discretization} is locally mass conservative, as demonstrated by testing with $v_h = 1$ in one element $\elem \in \mesh$ and $v_h = 0$ in all other elements $\elem \in \mesh$. 
  In this case, the bilinear form \( a_h( w_h , 1|_\elem) \) is restricted to a single element \( \elem \in \mesh \) and reads
  \[
   a_h(w_h, 1|_\elem) = \int_\elem \mu w_h \dx + \sum_{\face \in \facets} \int_{\face \cap \partial \elem} \left[ \gamma \frac{\epsilon+\mu h_\face^2}{h_\face^{\beta}} \jump{w_h} - \mean{\epsilon \nabla w_h} \right] \cdot \normal_\elem \ds.
  \]
  The term in brackets is commonly defined as numerical flux, and it is uniquely defined on inter-element (and boundary) faces. Thus, multiplying it with the element normal \( \normal_\elem\) makes the face-wise integral anti-symmetric, which is the characterizing property for local mass conservation.
  \item  As it was mentioned earlier, we allow the possibility of taking $\beta\ge 1$.
  The error analysis requires taking $\beta=4$, which is the value considered in the numerical experiments (although for completeness, results using smaller values will also be presented). If we were to implement the EG method with this negative power, it would significantly affect the condition number of the associated matrix. Motivated by this fact, within the context of our bound-preserving method presented below, we have also introduced an iterative scheme that completely bypasses this issue.
  \item Strictly speaking, a standard symmetric interior penalty DG bilinear form would motivate the definition of the penalty (with a varying $\beta$ scaling) by 
  $
      \langle \gamma \frac{\epsilon}{h_\face^{\beta}} \jump{w_h},\jump{v_h} \rangle.
      $
  Thus, compared to our definition, it would not include the terms involving $\mu$. While this choice will aid in the analysis, it also has a positive impact on stability with respect to the choice of $\gamma$, as discussed in the numerical section.
 \end{enumerate}
\end{remark}

\section{Positivity-preserving finite elements} \label{sec:postpreserv}
%
As discussed, we require our EG approximate $u_h$ to remain within some invariant domain $\mathcal G = [a,b]$, but the EG method generally does not guarantee that  $u_h(\bx)\in \mathcal G$ for all $\bx\in \Omega$. 
Thus, we need to correct it such that our corrected approximate $u_h^+ \in V_h$
\begin{enumerate}
 \item\label{IT:bound_preserving} takes values in $\mathcal G$ only,
 \item\label{IT:mass_conserving} is locally mass conservative and optimally convergent.
\end{enumerate}

To construct $u^+_h$ let us define a projection operator that attempts to correct a function $v_h \in V_h$ such that it only takes values in $\mathcal G$.  The construction of this operator will be based on the split of $V_h$ as piecewise linear and piecewise constant parts. So, every $v_h\in V_h$ is split as 
\begin{equation}\label{EQ:decomp}
 v_h = v^1_h + v^0_h \qquad \text{ where } v_h \in V_h, \; v^1_h \in V^1_h, \; v^0_h \in V^0_h\,.
\end{equation}
Next, we denote by 
$\phi_1,\ldots,\phi_N$ are the standard basis functions (the ``hat'' functions) spanning the space $V_h^1$. Then, given $w_h=w_h^1+w_h^0\in V_h$ we define the truncation 
\begin{gather}
 P^{w_h}_i (v_h) = \max\left[ a - \underline w_{hi}, \; \min\left( v^1_h(\bx_i), b - \overline w_{hi} \right) \right], \qquad \text{ where} \label{EQ:def_p}\\
 \underline w_{hi} = \min\{ w^0_h(\bx)\colon \bx \in \omega_i \} \qquad \text{ and } \qquad \overline w_{hi} = \max\{ w^0_h(\bx)\colon \bx \in \omega_i \}.\notag
\end{gather}
Using this truncation, we  define the (nonlinear) mapping
\begin{equation*}
 P^{w_h}_h\colon V_h \ni v_h \mapsto P^{w_h}_h(v_h) = \sum_{i = 1}^N P^{w_h}_i (v_h)\phi_i + w^0_h \in V_h.
\end{equation*}

Additionally, we introduce the operator $Q^{w_h}_h (v_h) = v^1_h - [P^{w_h}_h (v_h)]^1$, and emphasize that the superscripts $0$ and $1$ always refer to the decomposition according to \eqref{EQ:decomp}. Finally, we abbreviate
\begin{equation*}
 v^+_h = v^{1+}_h + v^{0}_h = P^{v_h}_h (v_h) \in V_h \qquad \text{ and } \qquad v^-_h = v^{1-}_h = Q^{v_h}_h (v_h) \in V^1_h \subset V_h,
\end{equation*}
where $v^{1+}_h = \sum_{i = 1}^N P^{v_h}_i (v_h)\phi_i$. The effect of these operators is illustrated in Figure \ref{FIG:projection} for the one-dimensional case.

\begin{figure}[ht]\centering
\newcommand{\drawmesh}{
   \draw[fill=black] (0,0) circle [radius=0.3mm];
   \draw[fill=black] (1,0) circle [radius=0.3mm];
   \draw[fill=black] (2,0) circle [radius=0.3mm];
   \draw[fill=black] (3,0) circle [radius=0.3mm];
   \node[] at (0.5,-0.3) {$T_1$};
   \node[] at (1.5,-0.3) {$T_2$};
   \node[] at (2.5,-0.3) {$T_3$};
   \draw[dashed, black!70!white] (0,0) -- (3,0);
   \draw[dashed] (0,1) -- (3,1);
}
\def\czero{0.1}
\def\cone{-0.15}
\def\ctwo{0.25}

\def\lzero{0.4}
\def\lone{1.2}
\def\ltwo{0.0}
\def\lthree{0.6}

\newcommand{\project}[3]{%
   {max(0-min(#1,#2), min(#3,1-max(#1,#2)))}%
}
\newcommand{\plus}[4]{%
   {max(0-min(#1,#2), min(#3,1-max(#1,#2)))+#4}%
}
\newcommand{\minus}[3]{%
   {#3-max(0-min(#1,#2), min(#3,1-max(#1,#2)))}%
}

\begin{tikzpicture}[scale=2] 
 \drawmesh
 
 \draw[purple, line width=0.3mm] (0,\czero) -- (1,\czero);
 \draw[purple, line width=0.3mm] (1,\cone) -- (2,\cone);
 \draw[purple, line width=0.3mm] (2,\ctwo) -- (3,\ctwo);

 \draw[orange, line width=0.3mm] (0,\lzero) -- (1,\lone) -- (2,\ltwo) -- (3,\lthree);
 
   \draw[cyan, line width=0.3mm] (0,\czero+\lzero) -- (1,\czero+\lone);
   \draw[cyan, line width=0.3mm] (1,\cone+\lone) -- (2,\cone+\ltwo);
   \draw[cyan, line width=0.3mm] (2,\ctwo+\ltwo) -- (3,\ctwo+\lthree);

 \begin{scope}[xshift=4cm]
   \drawmesh

   \draw[orange, line width=0.3mm] 
        (0,\project{\czero}{\czero}{\lzero}) -- 
        (1,\project{\czero}{\cone}{\lone}) -- 
        (2,\project{\cone}{\ctwo}{\ltwo}) -- 
        (3,\project{\ctwo}{\ctwo}{\lthree});

   \draw[cyan, line width=0.3mm] (0,\plus{\czero}{\czero}{\lzero}{\czero}) -- (1,\plus{\czero}{\cone}{\lone}{\czero});
   \draw[cyan, line width=0.3mm] (1,\plus{\czero}{\cone}{\lone}{\cone}) -- (2,\plus{\cone}{\ctwo}{\ltwo}{\cone});
   \draw[cyan, line width=0.3mm] (2,\plus{\cone}{\ctwo}{\ltwo}{\ctwo}) -- (3,\plus{\ctwo}{\ctwo}{\lthree}{\ctwo});

   \draw[purple, line width=0.3mm] (0,\minus{\czero}{\czero}{\lzero}) -- (1,\minus{\czero}{\cone}{\lone});
   \draw[purple, line width=0.3mm] (1,\minus{\czero}{\cone}{\lone}) -- (2,\minus{\cone}{\ctwo}{\ltwo});
   \draw[purple, line width=0.3mm] (2,\minus{\cone}{\ctwo}{\ltwo}) -- (3,\minus{\ctwo}{\ctwo}{\lthree});

 \end{scope}

 \node at (3.5,0) {$a = 0$};
 \node at (3.5,1) {$b = 1$}; 
\end{tikzpicture}
\caption{Illustration of $P_h^{v_h}$ on three elements $T_1, T_2, T_3$. Left: $v_h$ (cyan) and $v^1_h$ (orange) and $v_h^{0}$ (purple). Right: $v_h^+$ (cyan) and  $v_h^{1+}$ (orange) with $v_h^-$ (purple). In both cases, $a=0$ and $b=1$.}\label{FIG:projection}
\end{figure}

The mapping $P_h^{v_h}$ is built with the aim of guaranteeing that $P_h^{v_h}(\bx)\in\mathcal G$ for all $\bx\in\Omega$. This is not necessarily true for every $v_h\in V_h$. In the next result, we state the condition under which this fact can be achieved, and we will identify criteria that inform the assumptions made in Section \ref{SEC:positivity}.
\begin{lemma}\label{COR:positivity_preservation}
 Let $v_h \in V_h$. If $a - \underline v_{hi} \le b - \overline v_{hi}$ for all node indices $i = 1, \dots, N$, then $v^+_h(\bx) \in \mathcal G$ for all $\bx\in\Omega$. 
\end{lemma}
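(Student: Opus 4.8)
The plan is to argue elementwise and reduce everything to a convex-combination estimate. First I would record what the hypothesis buys us at the nodal level: the inequality $a - \underline v_{hi} \le b - \overline v_{hi}$ says precisely that the interval $[a - \underline v_{hi},\, b - \overline v_{hi}]$ is nonempty, so that the truncation $P^{v_h}_i(v_h) = \max[\,a - \underline v_{hi},\, \min(v^1_h(\bx_i),\, b - \overline v_{hi})\,]$ is the (well-defined) projection of the nodal value $v^1_h(\bx_i)$ onto that interval. Consequently, for every node index $i$ one obtains the two-sided bound
\begin{equation*}
 a - \underline v_{hi} \le P^{v_h}_i(v_h) \le b - \overline v_{hi}.
\end{equation*}
This is the only place the hypothesis enters; without it the $\max$/$\min$ expression need not lie between the two prescribed bounds.

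Next I would fix an element $\elem \in \mesh$ and a point $\bx \in \elem$, and write, using that $v^0_h$ equals the constant $v^0_h|_\elem$ on $\elem$ and that only the hat functions of the vertices of $\elem$ are supported at $\bx$,
\begin{equation*}
 v^+_h(\bx) = \sum_{i:\, \bx_i \in \elem} P^{v_h}_i(v_h)\,\phi_i(\bx) + v^0_h|_\elem ,
\end{equation*}
where $\phi_i(\bx) \ge 0$ and $\sum_{i:\,\bx_i\in\elem}\phi_i(\bx) = 1$ by the partition-of-unity property on $\elem$. The key structural observation is that whenever $\bx_i \in \elem$ we have $\elem \in \omega_i$, so by the definition of the patch extrema $\underline v_{hi} \le v^0_h|_\elem \le \overline v_{hi}$.

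For the upper bound I would insert $P^{v_h}_i(v_h) \le b - \overline v_{hi}$, use $\phi_i(\bx) \ge 0$ together with the partition of unity to obtain $\sum_i (b - \overline v_{hi})\phi_i(\bx) = b - \sum_i \overline v_{hi}\phi_i(\bx)$, and then exploit $\overline v_{hi} \ge v^0_h|_\elem$ to get $\sum_i \overline v_{hi}\phi_i(\bx) \ge v^0_h|_\elem$. The local constant then cancels and leaves $v^+_h(\bx) \le b$. The lower bound $v^+_h(\bx) \ge a$ is entirely symmetric, inserting $P^{v_h}_i(v_h) \ge a - \underline v_{hi}$ and using $\underline v_{hi} \le v^0_h|_\elem$. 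Since $\elem$ and $\bx$ were arbitrary, this yields $v^+_h(\bx)\in\mathcal{G}$ throughout $\Omega$.

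I expect the main (and essentially the only) subtle point to be this cancellation of the piecewise-constant contribution: the truncation is deliberately taken against the \emph{patch} extrema $\underline v_{hi}$, $\overline v_{hi}$ rather than against the local value $v^0_h|_\elem$, and it is exactly the sandwich $\underline v_{hi} \le v^0_h|_\elem \le \overline v_{hi}$ that makes the constant drop out after forming the convex combination. A secondary point requiring care is the partition-of-unity step on elements meeting $\partial\Omega$, where only the interior hat functions are present and sum to at most one; since $v^+_h|_\elem$ is affine, there I would instead verify the bound at the vertices of $\elem$, which reduces the boundary case to checking that the local constant $v^0_h|_\elem$ itself lies in $\mathcal{G}$.
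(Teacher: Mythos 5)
Your interior-element argument is correct, and it is the natural proof: the hypothesis makes each truncation interval $[a-\underline v_{hi},\,b-\overline v_{hi}]$ nonempty, hence $a-\underline v_{hi}\le P^{v_h}_i(v_h)\le b-\overline v_{hi}$; combined with the sandwich $\underline v_{hi}\le v_h^0|_\elem\le\overline v_{hi}$ (valid because $\elem\in\omega_i$ whenever $\bx_i$ is a vertex of $\elem$) and the partition of unity on $\elem$, the piecewise-constant part cancels and $a\le v_h^+(\bx)\le b$ follows. For what it is worth, the paper states this lemma without any proof, so there is no argument of the authors to compare against; your write-up supplies exactly the missing interior computation.

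The boundary case, however, which you defer as a ``secondary point,'' is a genuine gap, and it cannot be closed under the stated hypothesis. Your reduction is correct: on an element $\elem$ with a vertex on $\partial\Omega$, the function $v_h^+|_\elem$ is affine, the interior vertices are handled by the nodal bound above, and at a boundary vertex every basis function of $V_h^1$ vanishes, so $v_h^+$ equals $v_h^0|_\elem$ there; one therefore needs $v_h^0|_\elem\in\mathcal G$. But this does not follow from $a-\underline v_{hi}\le b-\overline v_{hi}$, which only constrains the oscillation $\overline v_{hi}-\underline v_{hi}\le b-a$ and says nothing about where $v_h^0$ sits. Concretely, take $v_h^1=0$ and $v_h^0\equiv c$ with $c>b$: then $\underline v_{hi}=\overline v_{hi}=c$, so the hypothesis reads $a-c\le b-c$ and holds trivially, yet $P^{v_h}_i(v_h)=b-c$ for every $i$, so that $v_h^+=\theta\,b+(1-\theta)\,c$ with $\theta=\sum_i\phi_i$. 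This exceeds $b$ at every point of every boundary element where $\theta<1$, and equals $c$ on any element all of whose vertices lie on $\partial\Omega$, so the conclusion ``$v_h^+(\bx)\in\mathcal G$ for all $\bx\in\Omega$'' fails. The defect lies in the lemma as stated rather than in your strategy --- it needs either a restriction to elements whose vertices are all interior nodes, or an additional assumption controlling $v_h^0$ on boundary elements --- but as a proof of the stated claim, yours stops precisely at the step that is false in general, and the deferred ``check'' cannot be carried out.
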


\subsection{The nonlinear finite element method} 

With the notations introduced in the last section, we introduce the finite element method studied in this work: Find $u_h\in V_h$ such that
\begin{equation}\label{EQ:stab_prob}
 a_h(u^+_h, v_h) + s_h(u^-_h, v_h) = b_h(v_h)\qquad\forall\, v_h\in V_h\,.
\end{equation}
Here, the bilinear form $a_h$ and linear form $b_h$ are defined as in \eqref{EQ:eg_discretization}, while the stabilization term for any $v_h, w_h \in V_h$ reads
\begin{equation}\label{EQ:stabilizer}
 s_h(w_h, v_h) = \alpha \sum_{i=1}^N (\epsilon h_i^{d-2} + \mu h_i^d) w^1_h(\bx_i) v^1_h(\bx_i).
\end{equation}
Here, $\alpha > 0$ is a nondimensional stabilization parameter, and $h_i = \max\{ h_\elem \mid \elem \in \mesh \text{ with } \elem \subset \omega_i \}$. The stabilization term $s_h$ induces the norm $\| \cdot \|_{s_h}$ on $V_h^1$ (and thus a seminorm in $V_h$). 

The finite element method just presented is nonlinear since $u_h^+$ is a nonlinear transformation of $u_h$. So, it requires an appropriate linearization. In addition, as mentioned earlier, the presence of the term $h_F^{-\beta}$ in the penalty can affect the condition number of the linear subproblems tremendously. To address these two issues, we now present a fixed-point iterative algorithm that enables us to both prove the existence of solutions and circumvent the potential ill-conditioning of the scheme. For this, we define the mapping
\begin{subequations}
\begin{equation}\label{EQ:fixed_point}
 \widetilde{T}\colon V^0_h \ni w^0_h \mapsto w^1_h \mapsto \tilde w^0_h \in V^0_h\,,
\end{equation}
by the following algorithm:

\begin{outerbox}

\textbf{Step 1}: Compute $w^1_h\in V_h^1$ by solving the nonlinear scheme:
\begin{equation}\label{EQ:T_1}
 a_h([P^{w^0_h}_h w_h]^1, v^1_h) + s_h([Q^{w^0_h}_h w_h]^1, v^1_h) = (f, v^1_h)_\mesh - a_h(w^0_h, v^1_h) \qquad \forall v^1_h \in V^1_h.
\end{equation}

\textbf{Step 2}: Compute $\tilde w^0_h \in V^0_h$ as solution of
\begin{equation}\label{EQ:T_2}
 a_h(\tilde w^0_h, v^0_h) = (f, v^0_h)_\mesh - a_h(w^{1+}_h, v^0_h) - \underbrace{ s_h(w^{1-}_h, v^0_h) }_{ = 0 } \qquad \forall v^0_h \in V^0_h.
\end{equation}
\end{outerbox}
\end{subequations}

In the analysis presented in the next section, we will show that the operator $\widetilde{T}$  is well-defined. In addition, we make the following observations:
\begin{enumerate}
 \item a function \( \tilde w_h = \tilde w_h^1 + \tilde w_h^0 \) comprising the fixed-point \( \tilde w^0_h \) of $\widetilde{T}$ and the solution of \eqref{EQ:T_1} with \( w^0_h \) replaced by \( \tilde w^0_h \) solves \eqref{EQ:stab_prob}, which we can easily see by adding \eqref{EQ:T_1} and \eqref{EQ:T_2}. So, in the next section we will show that $\widetilde{T}$ has a fixed point;
 \item since the problem \eqref{EQ:T_1} is posed over the space $V_h^1$ (which contains only continuous functions), the jump terms vanish. Thus, the condition number of the linear problems needed to solve \eqref{EQ:T_1} is independent of $\beta$;
 \item the left-hand side of \eqref{EQ:T_2} can be simplified to
 \begin{equation*}
(\mu \tilde w_h^0, v_h^0)_\mesh  + \sum_{F\in\facets}\frac{\gamma (\epsilon+\mu h_\face^2)}{h_F^\beta}\int_F \jump{\tilde w_h^0}\cdot \jump{v_h^0}\,.
 \end{equation*}
 Since the mesh is assumed to be shape-regular, then the matrix associated with \eqref{EQ:T_2} can be proven to have a condition number independent of $\beta$. We shall provide more details on this in Section~\ref{SEC:condtioning}.
\end{enumerate}

\begin{remark}[Bound preservation and mass conservation]
 Clearly, \( u^+_h \) takes only values in \( \mathcal G \) achieving the target described in item \ref{IT:bound_preserving}. Moreover, the stabilization \( s_h(u_h^-, v_h) \) vanishes if the \( v_h \in V_h \) is chosen as the characteristic function of one mesh element, i.e., from the space \( V^0_h \). So, the method \eqref{EQ:stab_prob} reduces to the terms only included in \eqref{EQ:eg_discretization} for such a test function, and thus the solution \( u^+_h \) satisfies the local mass conservation criterion of item \ref{IT:mass_conserving} (see also item~\ref{IT:eg_conserv} in Remark~\ref{rem::prop_EG}) with the same numerical inter-element flux as the EG scheme defined in \eqref{EQ:eg_discretization}.
\end{remark}

\section{Existence of a discrete solution}\label{SEC:existence}
%
This section establishes the existence and stability of a solution to \eqref{EQ:stab_prob}, which is made explicit in the following Theorem:

\begin{theorem}\label{TH:existence}
 Let $\beta \ge 2$ and let us suppose that $\alpha, \gamma > 0$ are sufficiently large. 
 Then, there exists a solution $u_h=u_h^1+u_h^0\in V_h$ of \eqref{EQ:stab_prob}. In addition, the piecewise constant part $u_h^0$ satisfies the following a priori bound
 \begin{equation}\label{EQ:stab_result}
\| \jump{ u^0_h } \|^2_\facets 
\le \, C\,\frac{h^{2\beta- 4}}{\gamma^2} \left[ C_P^2\mu^{-1}\| f \|^2_{0,\Omega} + (C_P^2\mu+\epsilon)\max(|a|,|b|)^2 |\Omega| \right]\,,
 \end{equation}
 where $C>0$ depends only on the shape-regularity of the mesh.
\end{theorem}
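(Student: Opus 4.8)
The plan is to establish existence via Brouwer's fixed-point theorem applied to the map $\widetilde T$ from \eqref{EQ:fixed_point}, and then to read off \eqref{EQ:stab_result} by testing the fixed-point equation with its own piecewise-constant part. First I would check that $\widetilde T$ is well defined. Step~2 \eqref{EQ:T_2} is linear, and its left-hand side reduces to $(\mu\,\tilde w_h^0,v_h^0)_\mesh+\gamma\sum_{F\in\facets}\frac{\epsilon+\mu h_\face^2}{h_\face^\beta}\langle\jump{\tilde w_h^0},\jump{v_h^0}\rangle$, which is coercive on $V_h^0$ (already $\mu\|\cdot\|_\mesh^2$ is a norm there); hence $\tilde w_h^0$ exists and is unique by Lax--Milgram. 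Step~1 \eqref{EQ:T_1} is nonlinear through the truncation $P^{w^0_h}_h$, but this truncation acts nodally as the $1$-Lipschitz projection onto an interval, so that both $v_h^1\mapsto[P^{w^0_h}_h(\cdot)]^1$ and its complement $Q^{w^0_h}_h$ are monotone; combined with the coercivity \eqref{Eq:a-elliptic} of $a_h$ on the conforming space $V_h^1$ (where all jumps vanish) and the positivity of $s_h$, the resulting operator on the finite-dimensional space $V_h^1$ is continuous, monotone and coercive, so a solution exists by the Browder--Minty theorem. Continuity of $\widetilde T$ then follows from the continuous dependence in both steps.

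Next I would set up the self-map on the closed convex set $K=\{w^0_h\in V_h^0:\|\jump{w^0_h}\|_\facets\le R\}$, where two facts have to be reconciled. For $w^0_h\in K$ with $R$ small, the local oscillation $\overline w_{hi}-\underline w_{hi}$ is controlled by the jumps of $w^0_h$ (via quasi-uniformity and a trace/inverse estimate), so it stays below $b-a$; the hypothesis of Lemma~\ref{COR:positivity_preservation} then holds and $w^+_h=w^{1+}_h+w^0_h$ takes values in $\mathcal G$, whence $\|w^+_h\|_{0,\infty,\Omega}\le\max(|a|,|b|)$. Conversely, the estimate sketched below shows that $\|\jump{\widetilde T(w^0_h)}\|_\facets\to0$ as $\gamma\to\infty$, uniformly on $K$; choosing $\gamma$ (and $\alpha$) large enough then gives $\widetilde T(K)\subseteq K$. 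As $V_h^0$ is finite-dimensional and $\widetilde T$ is continuous, Brouwer's theorem yields a fixed point, which, by adding \eqref{EQ:T_1} and \eqref{EQ:T_2}, solves \eqref{EQ:stab_prob}.

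For the bound \eqref{EQ:stab_result} I would test \eqref{EQ:stab_prob} at a fixed point $u_h=u_h^1+u_h^0$ with $v_h=u^0_h$. Since $u^0_h$ is piecewise constant, all terms with $\nabla u^0_h$ disappear and $s_h(u^-_h,u^0_h)=0$; writing $u^{1+}_h=u^+_h-u^0_h$ and using that the continuous $u^{1+}_h$ has no jumps, the reaction cross term cancels the $\mu\|u^0_h\|_\mesh^2$ contribution and leaves the identity
\[
\gamma\sum_{F\in\facets}\frac{\epsilon+\mu h_\face^2}{h_\face^\beta}\|\jump{u^0_h}\|_{0,F}^2=(f-\mu u^+_h,u^0_h)_\mesh+\langle\mean{\epsilon\nabla u^+_h},\jump{u^0_h}\rangle .
\]
I would then bound the right-hand side using $\|u^+_h\|_{0,\infty,\Omega}\le\max(|a|,|b|)$: a discrete Poincaré--Friedrichs inequality (with the boundary facets pinning down the constant) controls $\|u^0_h\|_\mesh$ by $\big(\sum_{F}h_F^{-1}\|\jump{u^0_h}\|_{0,F}^2\big)^{1/2}$, producing the $C_P^2\mu^{-1}\|f\|_{0,\Omega}^2$ and $C_P^2\mu\max(|a|,|b|)^2|\Omega|$ terms, while an inverse inequality applied to $\nabla u^{1+}_h=\nabla u^+_h$ together with a trace estimate controls $\mean{\epsilon\nabla u^+_h}$ and produces the $\epsilon\max(|a|,|b|)^2|\Omega|$ term. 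Isolating the over-penalized jump quantity, using quasi-uniformity to turn every $h_F$ into $h$, and converting back through $\|\jump{u^0_h}\|_\facets^2\le C\,h^{\beta-1}\sum_{F}\frac{\epsilon+\mu h_\face^2}{h_\face^\beta}\|\jump{u^0_h}\|_{0,F}^2$ then gives \eqref{EQ:stab_result}; the hypothesis $\beta\ge2$ is exactly what keeps $h^{2\beta-4}$ bounded as $h\to0$.

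I expect the self-map step to be the main obstacle. The difficulty is that $V_h^0$ contains global constants invisible to the jump seminorm, so $w^0_h$ cannot be bounded in $L^2$ from membership in $K$ alone; one must simultaneously guarantee that the hypothesis of Lemma~\ref{COR:positivity_preservation} holds on $K$ (to make bound preservation available in the estimate) and that the estimate itself closes the self-map, both of which work only for $\alpha,\gamma$ sufficiently large. The discrete Poincaré--Friedrichs inequality including the boundary facets is what removes the apparent loss of control on the constant, and the quantitative $\gamma^{-2}$ and $h^{2\beta-4}$ dependence must be tracked carefully throughout.
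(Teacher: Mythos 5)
Your proposal follows the same architecture as the paper: well-posedness of the two half-steps (monotone-operator theory for \eqref{EQ:T_1}, Lax--Milgram for \eqref{EQ:T_2}), a self-map of a jump-norm ball under $\widetilde T$, Brouwer's fixed-point theorem, and the a priori bound obtained by testing the constant-part equation with the constant part itself; your closing identity and the conversion $\|\jump{u_h^0}\|^2_\facets \lesssim h^{\beta-1}\sum_{F}\frac{\epsilon+\mu h_F^2}{h_F^\beta}\|\jump{u_h^0}\|^2_{0,F}$ reproduce the content of Lemma~\ref{LEM:stab2}.

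There is, however, one substantive deviation, and it is a genuine weakness. To bound the truncated linear part in the stability estimate you invoke Lemma~\ref{COR:positivity_preservation}, i.e.\ $\|w_h^+\|_{0,\infty,\Omega}\le\max(|a|,|b|)$, and therefore you must force its hypothesis (small patchwise oscillation of $w_h^0$) by shrinking the radius $R$ of your ball $K$. Since a facet jump of a piecewise constant controls the difference of neighboring values only through $|F|^{-1/2}\sim h^{-(d-1)/2}$ and the weight $(\epsilon+\mu h_F^2)/h_F$, this forces $R\lesssim (b-a)\,h^{d/2-1}(\epsilon+\mu h^2)^{1/2}$, an $h$-dependent smallness constraint; closing the self-map then needs $\gamma\gtrsim h^{\beta-2}[\,\cdot\,]^{1/2}/R$, whose threshold degenerates as $h\to 0$ (e.g.\ like $h^{-1/2}$ for $d=3$, $\beta=2$, fixed $\epsilon$), whereas the paper's proof yields a threshold that does not degenerate through any positivity requirement. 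The paper avoids your detour entirely: Lemma~\ref{LEM:stab1} gives the \emph{unconditional} bound $\|w_h^{1+}\|_{0,\Omega}^2\le C\big[\max(|a|,|b|)^2|\Omega|+\|w_h^0\|_{0,\Omega}^2\big]$, which follows directly from the definition of the nodal truncation, $|w_h^{1+}(\bx_i)|\le\max(|a|,|b|)+\|w_h^0\|_{0,\infty,\omega_i}$, with no hypothesis on $w_h^0$; the self-map then works on the fixed unit ball (Lemma~\ref{LEM:existence}), and bound preservation is treated separately as a worst-case criterion in Section~\ref{SEC:positivity}, not as an ingredient of existence. Replacing your use of Lemma~\ref{COR:positivity_preservation} by this unconditional estimate repairs the argument. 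Two smaller points: along the iteration the input $w_h^{1+}$ and the output $\tilde w_h^0$ are different functions, so the recombination $u_h^{1+}=u_h^+-u_h^0$ in your closing identity is only legitimate at a fixed point, and the iteration estimate must keep them separate as Lemma~\ref{LEM:stab2} does; and your Step-1 monotonicity sketch silently needs the cross-term inequality of Lemma~\ref{LEM:generalize_Barranachea} together with $\alpha\ge C_\textup{equiv}$ --- positivity of $s_h$ and nodal $1$-Lipschitz truncation alone do not give strong monotonicity of the combined operator.
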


To show this result, we will exploit the fixed-point iteration defined in \eqref{EQ:fixed_point}. Thus, we need to show that the subproblems \eqref{EQ:T_1} and \eqref{EQ:T_2} are well-posed, i.e., that Step 1 and Step 2 in the construction of \( \tilde T \) are well-defined and continuous operations. We do this in Lemmas \ref{EQ:wellposed_T1_real} and \ref{LEM:t2_wellposed}, respectively. Afterward, we establish the fact that \( \tilde T \) admits at least one fixed-point by applying Brouwer's fixed point theorem in an adequately constructed ball, cf.\ Section \ref{SEC:proof_existence}. To this end, we use that \( \tilde T \) is continuous (which follows from Lemmas \ref{EQ:wellposed_T1_real} and \ref{LEM:t2_wellposed}), and and show that it maps the adequately constructed ball to itself. We start this endeavor by providing auxiliary results.

\subsection{Preliminaries and auxiliary results}
We exploit our fixed-point iteration \eqref{EQ:fixed_point} to prove this result. Thus, we need to prove that \eqref{EQ:T_1} and \eqref{EQ:T_2} induce well-posed problems, which we perform in Lemmas \ref{LEM:T1_wellposed} and \ref{LEM:t2_wellposed}, respectively. To this end, we need several auxiliary results.
\begin{lemma}[Broken Poincar\'e inequality, \cite{Brenner03}]\label{LEM:poincare}
 For every $v^0_h \in V^0_h$ the following broken Poincar\'e inequality holds
 \begin{equation*}
  \| v^0_h \|_{0,\Omega}^{} \le\, C_P^{} \left\{\sum_{\face\in\facets}\frac{1}{h_\face^{}}\|\jump{v_h^0}\|^2_{0,\face}\right\}^{\frac{1}{2}}\,,
 \end{equation*}
 where $C_P^{}>0$ depends only on $\Omega$ and the regularity of the mesh.
\end{lemma}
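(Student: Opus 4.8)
The plan is to prove the inequality by a duality (Aubin--Nitsche-type) argument against the Laplacian, exploiting that $v_h^0$ is piecewise constant so that $\nabla v_h^0$ vanishes element-wise and the whole $L^2$-mass of $v_h^0$ is carried by its jumps across the facets of $\facets$. (Since the result is attributed to \cite{Brenner03}, one may alternatively just invoke that reference; I sketch a self-contained argument.) First I would introduce the dual problem: let $w\in H^1_0(\Omega)$ solve $-\Delta w=v_h^0$ in $\Omega$ with $w=0$ on $\partial\Omega$. On the bounded Lipschitz domain $\Omega$ this is well posed, and elliptic regularity ($w\in H^2$ if $\Omega$ is convex, and $w\in H^{1+s}$ with $s>\tfrac12$ in general, which already guarantees an $L^2$-trace of $\nabla w$ on every facet) provides the stability estimate $\|w\|_{1,\Omega}+h\,|w|_{2,\Omega}\le C\,\|v_h^0\|_{0,\Omega}$, with $C$ depending only on $\Omega$.

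Next I would test the dual problem with $v_h^0$ and integrate by parts element-wise. Because $\nabla v_h^0=0$ the volume contributions drop, and after regrouping the element-boundary integrals facet-by-facet (using that $\nabla w$ has a single-valued normal trace, and that on a boundary facet $\jump{v_h^0}=v_h^0\normal$ with $w=0$ there) one arrives at
\begin{equation*}
\|v_h^0\|_{0,\Omega}^2=(v_h^0,-\Delta w)=-\sum_{F\in\facets}\int_F \jump{v_h^0}\cdot\mean{\nabla w}\ds.
\end{equation*}
This identity is the heart of the matter: it is precisely the boundary facets that let the full norm of $v_h^0$---not merely its zero-mean part---be expressed through jumps, so that no additional mean-value term is needed.

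Then I would estimate the right-hand side by a facet-wise Cauchy--Schwarz inequality with the weights $h_F^{-1/2}$ and $h_F^{1/2}$,
\begin{equation*}
\|v_h^0\|_{0,\Omega}^2\le\Big(\sum_{F\in\facets}\tfrac{1}{h_F}\|\jump{v_h^0}\|_{0,F}^2\Big)^{1/2}\Big(\sum_{F\in\facets}h_F\,\|\mean{\nabla w}\|_{0,F}^2\Big)^{1/2},
\end{equation*}
and control the second factor by the scaled multiplicative trace inequality $h_F\,\|\nabla w\|_{0,F}^2\le C\big(\|\nabla w\|_{0,T}^2+h_T^2\,|w|_{2,T}^2\big)$ on each adjacent element $T$. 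Summing over the shape-regular mesh and inserting the regularity bound gives $\sum_{F\in\facets}h_F\|\mean{\nabla w}\|_{0,F}^2\le C\big(|w|_{1,\Omega}^2+h^2|w|_{2,\Omega}^2\big)\le C\,\|v_h^0\|_{0,\Omega}^2$. Dividing the displayed inequality by $\|v_h^0\|_{0,\Omega}$ then yields the claim, with $C_P$ depending only on $\Omega$ (through the regularity constant) and on the mesh shape-regularity (through the trace and scaling constants).

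I expect the main obstacle to be the $h$-independence of $C_P$: the estimate must be organized so that the negative power $h_F^{-1}$ in the jump weight is exactly balanced by the positive power $h_F$ extracted from the scaled trace inequality, leaving no surviving power of $h$ once the $H^2$-stability of $w$ is used. A secondary delicate point is the regularity of $w$ on a non-convex Lipschitz $\Omega$: one must verify that the available $H^{1+s}$ regularity with $s>\tfrac12$ still furnishes the $L^2$ facet traces of $\nabla w$ used above; if this is in doubt, the combinatorial argument of \cite{Brenner03}---which chains each element value to the boundary along sequences of neighboring facets and never invokes elliptic regularity---is preferable and delivers the same $h$-uniform constant.
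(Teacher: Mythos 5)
The paper never proves this lemma: it is imported verbatim from \cite{Brenner03}, whose proof compares the piecewise function with a conforming approximation (an averaging/enriching construction combined with the classical Poincar\'e--Friedrichs inequality) and uses no elliptic regularity at all. So your remark that one may ``just invoke that reference'' coincides exactly with what the paper does, and your duality sketch is a genuinely different, self-contained route. For convex $\Omega$ it is correct: the identity $\|v_h^0\|_{0,\Omega}^2=-\sum_{F\in\facets}\int_F\jump{v_h^0}\cdot\mean{\nabla w}\ds$ follows from element-wise Green's formula with $\nabla v_h^0=0$ and the single-valued trace of $\nabla w$; you correctly recognized that the boundary facets (where $\jump{v_h^0}=v_h^0\normal$) are what make a Friedrichs-type bound with no mean-value correction possible; and the weights $h_F^{-1/2}$, $h_F^{1/2}$ in Cauchy--Schwarz cancel against the scaled trace inequality with no leftover power of $h$, exactly as you argue. (Incidentally, the vanishing of $w$ on $\partial\Omega$ plays no role in the identity; what matters is only that the boundary terms are retained and match the definition of the jump there.)

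The weak point is the one you flag yourself, and in this paper it is not a side issue: $\Omega$ is assumed only to be a polyhedral, bounded, Lipschitz domain, not convex. On such a domain $|w|_{2,\Omega}$ may be infinite, so your stability estimate $\|w\|_{1,\Omega}+h\,|w|_{2,\Omega}\le C\|v_h^0\|_{0,\Omega}$ and the trace bound $h_F\|\nabla w\|_{0,F}^2\le C\big(\|\nabla w\|_{0,T}^2+h_T^2|w|_{2,T}^2\big)$ are simply unavailable. Observing that $w\in H^{1+s}(\Omega)$ with $s>\tfrac12$ gives $L^2$ facet traces of $\nabla w$ is not enough by itself: the second Cauchy--Schwarz factor must be bounded \emph{quantitatively}, which requires the fractional analogue $h_F\|\nabla w\|_{0,F}^2\le C\big(\|\nabla w\|_{0,T}^2+h_T^{2s}|\nabla w|_{s,T}^2\big)$, the superadditivity $\sum_{T\in\mesh}|\nabla w|_{s,T}^2\le|\nabla w|_{s,\Omega}^2$ of the Slobodeckij seminorm, and the regularity bound $|w|_{1+s,\Omega}\le C\|v_h^0\|_{0,\Omega}$; only then does the $h_F^{-1}$ weight cancel with no surviving power of $h$. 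Since you did not carry this out, your self-contained argument as written covers only the convex case, and your fallback---the chaining/averaging argument of \cite{Brenner03}, or simply the citation---is what actually covers the generality the paper needs, and is what the paper itself relies on.
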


\begin{lemma}
 There is a constant $C_\textup{equiv}^{}$, which only depends on the mesh regularity, such that
 \begin{equation*}
  \left(\mu\|v_h^1\|_{\mathcal{T}_h}^2+\epsilon \|\nabla v_h^1\|_{\mathcal{T}_h}^2\right)^{1/2}=\| v^1_h \|_{a_h}^{} \le \frac{C_\textup{equiv}^{}}{\sqrt{\alpha}} \| v^1_h \|_{s_h}^{} \qquad \text{ for all } v^1_h \in V^1_h.
 \end{equation*}
\end{lemma}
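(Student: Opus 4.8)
The plan is to first exploit that every $v_h^1\in V_h^1$ is globally continuous, hence $\jump{v_h^1}=0$ on every $\face\in\facets$; consequently the consistency and penalty contributions in $\|\cdot\|_{a_h}$ vanish and
\[
\|v_h^1\|_{a_h}^2=\epsilon\|\nabla v_h^1\|_{0,\Omega}^2+\mu\|v_h^1\|_{0,\Omega}^2 ,
\]
which is exactly the equality stated on the left. Recalling that $\|v_h^1\|_{s_h}^2=\alpha\sum_{i=1}^N(\epsilon h_i^{d-2}+\mu h_i^d)\,|v_h^1(\bx_i)|^2$, the entire claim reduces to the $\epsilon$-, $\mu$- and $\alpha$-independent estimate
\[
\epsilon\|\nabla v_h^1\|_{0,\Omega}^2+\mu\|v_h^1\|_{0,\Omega}^2\ \le\ C\sum_{i=1}^N(\epsilon h_i^{d-2}+\mu h_i^d)\,|v_h^1(\bx_i)|^2 ,
\]
with $C$ depending only on the shape-regularity of the mesh.

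I would prove this elementwise and then sum. For a fixed $\elem\in\mesh$, let $F_\elem$ be the affine map from a reference simplex $\hat\elem$ onto $\elem$, so that, by shape-regularity, $\|DF_\elem\|\sim h_\elem$ and $|\det DF_\elem|\sim h_\elem^{d}$. Since $\IP_1(\hat\elem)$ is finite dimensional and its vertex values form a set of degrees of freedom, all norms on it are equivalent; in particular $\|\hat\nabla\hat v\|_{0,\hat\elem}^2+\|\hat v\|_{0,\hat\elem}^2\le \hat C\sum_{j}|\hat v(\hat\bx_j)|^2$, the sum running over the vertices of $\hat\elem$. Pulling back through $F_\elem$ — the vertex values are invariant, the gradient acquires a factor $h_\elem^{-1}$, and the volume element a factor $h_\elem^{d}$ — yields the two local bounds
\[
\|\nabla v_h^1\|_{0,\elem}^2\le C\,h_\elem^{d-2}\sum_{\bx_j\in\elem}|v_h^1(\bx_j)|^2,\qquad \|v_h^1\|_{0,\elem}^2\le C\,h_\elem^{d}\sum_{\bx_j\in\elem}|v_h^1(\bx_j)|^2 .
\]
Multiplying the first by $\epsilon$ and the second by $\mu$ keeps the two physical parameters attached to separate terms, so that the resulting constant stays independent of both.

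It then remains to pass from the elementwise vertex sums to the global nodal sum. Summing over $\elem\in\mesh$ and exchanging the order of summation gives $\sum_{\elem}\sum_{\bx_j\in\elem}(\epsilon h_\elem^{d-2}+\mu h_\elem^{d})|v_h^1(\bx_j)|^2=\sum_{j}|v_h^1(\bx_j)|^2\sum_{\elem\in\omega_j}(\epsilon h_\elem^{d-2}+\mu h_\elem^{d})$. The boundary vertices contribute nothing because $v_h^1$ vanishes there, so only the interior nodes $\bx_1,\dots,\bx_N$ survive. For each such node, $h_\elem\le h_i$ for all $\elem\in\omega_i$ by the definition of $h_i$, and since $d\ge 2$ this also gives $h_\elem^{d-2}\le h_i^{d-2}$; moreover shape-regularity bounds the cardinality of $\omega_i$. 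Collecting these facts reproduces the right-hand side of the displayed estimate, so that $\|v_h^1\|_{a_h}^2\le C\sum_{i}(\epsilon h_i^{d-2}+\mu h_i^{d})|v_h^1(\bx_i)|^2=C\,\alpha^{-1}\|v_h^1\|_{s_h}^2$, which is the asserted relation, the inverse power of $\alpha$ originating from the factor $\alpha$ in the definition of $s_h$.

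The argument is a chain of elementary scaling estimates, so I expect no serious obstacle; the two points that require care are keeping the constant independent of both $\epsilon$ and $\mu$ — which works only because $s_h$ weights the two terms with exactly the powers $h^{d-2}$ and $h^{d}$ produced by the pullback — and the regrouping step, where the exponent bound $h_\elem^{d-2}\le h_i^{d-2}$ hinges on $d\ge 2$. I would also note that this is only the direction needed in the fixed-point analysis (bounding the energy by the nodal stabilization); the reverse inequality, which is not required here, would follow from the same reference-element norm equivalence.
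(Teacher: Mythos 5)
Your scaling argument is correct in substance, but note first what you are comparing against: the paper does not prove this lemma at all -- it is quoted as inequality (3.8) of \cite{BarrenecheaGPV23}. Your proof (norm equivalence via vertex values on $\IP_1(\hat\elem)$, pullback producing exactly the weights $h_\elem^{d-2}$ and $h_\elem^{d}$, reindexing the element sums around interior nodes, boundedness of $\#\omega_i$ and $h_\elem\le h_i$ with $d\ge 2$ by shape regularity) is the standard argument underlying the cited result, so you have supplied the missing proof rather than taken a different route. One small imprecision: for a boundary facet $\face\subset\partial\Omega$ the jump is $\jump{v_h^1}=v_h^1\normal$, so global continuity alone does not make it vanish; you need the zero trace coming from $V_h^1\subset H^1_0(\Omega)$, which is the same fact you later invoke to discard the boundary vertices. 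Both uses are legitimate, but they should be attributed to the boundary condition, not to continuity.

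The one genuine discrepancy is the power of $\alpha$. What you prove is the squared-norm estimate $\|v_h^1\|_{a_h}^2\le \tfrac{C}{\alpha}\|v_h^1\|_{s_h}^2$, which upon taking square roots gives $\|v_h^1\|_{a_h}\le \sqrt{C/\alpha}\,\|v_h^1\|_{s_h}$, not the printed prefactor $C_\textup{equiv}/\alpha$; your closing claim that this ``is the asserted relation'' is therefore not literally accurate. The fault, however, lies with the statement rather than with your proof: since $\|v_h^1\|_{s_h}$ itself carries a factor $\sqrt{\alpha}$, the right-hand side $\tfrac{C_\textup{equiv}}{\alpha}\|v_h^1\|_{s_h}$ scales like $\alpha^{-1/2}$ times an $\alpha$-independent quantity, while the left-hand side is independent of $\alpha$; hence no constant $C_\textup{equiv}$ depending only on mesh regularity can make the printed inequality hold uniformly in $\alpha$. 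The statement is evidently a transcription of the squared-norm inequality of \cite{BarrenecheaGPV23}, which is exactly what you established. The downstream use is unaffected: Lemma \ref{LEM:T1_wellposed} only needs $\|v_h^1\|_{a_h}\le\|v_h^1\|_{s_h}$ once $\alpha$ exceeds a mesh-regularity constant, and your version delivers precisely that with the threshold $\alpha\ge C$.
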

\begin{proof}
    The proof of a very similar result can be seen in \cite[Eq.~(3.8)]{BarrenecheaGPV23}, but we summarize it here for completeness. First, using the following inverse inequality (see \cite[Lem.~12.1]{EG21-I}): There is a constant $\tilde c>0$, independent of $h$ such that
    \begin{equation}\label{Eq:inverse}
        \tilde c h_\elem^2 | v^{1}_h |^2_{1,\elem} \le \| v^{1}_h \|^2_{0,\elem},
    \end{equation}
    for all $v^{1}_h\in V^1_h$. 
    In addition, we recall the following inequality, whose proof can be found, e.g., in \cite[Prop.~12.5]{EG21-I}:
 \begin{equation}\label{Equiv:nodal-values}
  \| v^{1}_h \|^2_{0,\elem} \le\, C \sum_{\boldsymbol{x}_i\in\elem} h_i^d\, [v^{1}_h (\bx_i)]^2\,,
 \end{equation}
 for all $v^{1}_h\in V^1_h$. Then, using the inverse inequality \eqref{Eq:inverse} followed by \eqref{Equiv:nodal-values} we get to
 \begin{align*}
    \mu\|v_h^1\|_{\mathcal{T}_h}^2+\epsilon \|\nabla v_h^1\|_{\mathcal{T}_h}^2
    &\le C\,\sum_{\elem\in\mathcal{T}_h}(\mu+\epsilon h_\elem^{-2})\|v_h^1\|_{0,\elem}^2\\
    &\le C\,\sum_{\elem\in\mathcal{T}_h}(\mu+\epsilon h_\elem^{-2})\sum_{\boldsymbol{x}_i\in\elem}h_i^d[v^{1}_h (\bx_i)]^2\\
    &\le \frac{C}{\alpha}\,s(v_h^1,v_h^1)\,,
 \end{align*}
 where in the last step we used the mesh regularity. This finishes the proof.
\end{proof}

\begin{lemma}\label{LEM:stab1}
Given $v^0_h \in V^0_h$, there is a constant  $C$  (independent of $h$ and $v^0_h$) such that 
 \begin{equation*}
   \| v^{1+}_h \|^2_{0,\Omega} \le C \left[ \max(|a|,|b|)^2 |\Omega| + \| v^0_h \|^2_{0,\Omega} \right], \quad \text{ for all } v^1_h \in V^1_h.
 \end{equation*}
\end{lemma}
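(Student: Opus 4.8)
The plan is to treat the two estimates separately: the first is a routine inverse inequality, whereas the second requires careful use of the truncation defining $v^{1+}_h$. Recall that $v^{1+}_h=\sum_{i=1}^N P^{v_h}_i(v_h)\phi_i$ is, by construction, continuous and piecewise linear, so $v^{1+}_h|_\elem\in\IP_1(\elem)$ for every $\elem\in\mesh$. The claimed bound $\tilde c\,h_\elem^2|v^{1+}_h|^2_{1,\elem}\le\|v^{1+}_h\|^2_{0,\elem}$ is then nothing but the standard local inverse estimate $|p|_{1,\elem}\le C h_\elem^{-1}\|p\|_{0,\elem}$, valid for $p\in\IP_1(\elem)$ on a shape-regular mesh, after squaring and setting $\tilde c=C^{-2}$.

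The second inequality is where the structure of the truncation enters, through a pointwise (nodal) bound. Writing $A=a-\underline v_{hi}$, $B=v^1_h(\bx_i)$ and $C=b-\overline v_{hi}$, definition \eqref{EQ:def_p} gives $v^{1+}_h(\bx_i)=\max[A,\min(B,C)]$. A short case distinction (if $A\le C$ the value lies in $[A,C]$, while if $A>C$ the inner minimum is below $A$ and the value equals $A$) shows that in all cases $|v^{1+}_h(\bx_i)|\le\max(|A|,|C|)$. Since $\underline v_{hi}$ and $\overline v_{hi}$ are the extreme values of $v^0_h$ over $\omega_i$, we have $|\underline v_{hi}|,|\overline v_{hi}|\le\|v^0_h\|_{0,\infty,\omega_i}$, and therefore $|v^{1+}_h(\bx_i)|\le\max(|a|,|b|)+\|v^0_h\|_{0,\infty,\omega_i}$.

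With this nodal bound in hand I would pass to the $L^2$ norm via the finite-dimensional norm equivalence $\|v^{1+}_h\|^2_{0,\Omega}\le C h^d\sum_{i=1}^N|v^{1+}_h(\bx_i)|^2$, valid for continuous piecewise linears on a quasi-uniform mesh. Inserting the nodal bound together with $(x+y)^2\le 2x^2+2y^2$ splits the right-hand side into two sums. The first, $h^d\sum_i\max(|a|,|b|)^2$, is controlled using $h^dN\simeq|\Omega|$, giving the $\max(|a|,|b|)^2|\Omega|$ term. For the second, $h^d\sum_i\|v^0_h\|^2_{0,\infty,\omega_i}$, the elementwise equivalence for $\IP_0$ functions gives $\|v^0_h\|^2_{0,\infty,\elem}\simeq h_\elem^{-d}\|v^0_h\|^2_{0,\elem}$, so that $\|v^0_h\|^2_{0,\infty,\omega_i}\le C h^{-d}\sum_{\elem\in\omega_i}\|v^0_h\|^2_{0,\elem}$; the factor $h^d$ then cancels, and the finite overlap of the patches $\omega_i$ (each $\elem$ belongs to at most $d+1$ of them) collapses the double sum to $C\|v^0_h\|^2_{0,\Omega}$. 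Combining the two contributions yields the claim.

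The only genuinely non-routine step is the nodal bound, namely extracting $|v^{1+}_h(\bx_i)|\le\max(|a|,|b|)+\|v^0_h\|_{0,\infty,\omega_i}$ from the nested maximum and minimum in \eqref{EQ:def_p}; everything else is standard scaling, inverse estimates and finite-overlap bookkeeping, where quasi-uniformity is invoked to replace each $h_\elem$ by $h$.
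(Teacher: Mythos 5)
Your proposal is correct and follows essentially the same route as the paper's proof: a standard local inverse estimate for the first inequality, then the nodal bound $|v^{1+}_h(\bx_i)|\le\max(|a|,|b|)+\|v^0_h\|_{0,\infty,\omega_i}$ combined with the $L^2$--nodal-value norm equivalence (the paper cites \cite[Proposition~12.5]{EG21-I}) and an inverse inequality with finite-overlap bookkeeping for the second. The only difference is that you spell out the case distinction behind the nodal bound and the $\IP_0$ scaling argument, which the paper leaves implicit.
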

\begin{proof}
 Let $v_h^1\in V_h^1$. The definition of $v^{1+}_h(\bx_i)$ implies that $|v^{1+}_h(\bx_i)| \le \max(|a|,|b|) + \| v^0_h \|_{\infty,\omega_i}^{}$. 
Using this fact,  the inverse inequality \eqref{Eq:inverse}, the scaling \eqref{Equiv:nodal-values}, and the mesh regularity we get to
 \begin{equation*}
   \| v^{1+}_h \|^2_{0,\Omega} \le\, C \sum_{i = 1}^N [\max(|a|,|b|) + \| v^0_h \|_{\infty,\omega_i}^{}]^2 h_i^d \le\, C \left[ \max(|a|,|b|)^2 |\Omega| + \| v^0_h \|^2_{0,\Omega} \right],
 \end{equation*}
 which finishes the proof.
\end{proof}

In addition, we also recall the following trace-inverse inequality (see \cite[Lem.~12.10]{EG21-I}):
\begin{lemma}
 There exists $C>0$, depending only on the mesh regularity such that for any $T\in\mesh$ and a facet $F$ of $T$, and any $v_h^{}\in V_h^{}$ the following holds 
 \begin{equation}\label{Local:trace-inverse}
  \|v_h^{}\|_{0,F}^{}\le C\,h_T^{-1/2}\|v_h^{}\|_{0,T}^{}\,.
 \end{equation}
\end{lemma}

The next two results are instrumental in the proof that the nonlinear problem from Step 1 (cf. \eqref{EQ:T_1}) is well-posed. The first result is an generalization of \cite[Lem.\ 3.1]{BarrenecheaGPV23} to the present (more involved) case, so we present a summarized proof.

\begin{lemma}\label{LEM:generalize_Barranachea}
 For given $w^0_h \in V^0_h$, we have the following relations for all $r^1_h, v^1_h \in V^1_h$
 \begin{equation*}
  s_h([Q^{w^0_h}_h r^1_h]^1 - [Q^{w^0_h}_h v^1_h]^1, [P^{w^0_h}_h r^1_h]^1 - [P^{w^0_h}_h v^1_h]^1) \ge 0. 
 \end{equation*}
\end{lemma}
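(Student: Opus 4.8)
The plan is to reduce the bilinear-form inequality to a node-by-node scalar inequality, and then to recognize the latter as the firm nonexpansiveness of the one-dimensional truncation, i.e.\ the metric projection onto an interval.

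First I would unfold the definition of the stabilization \eqref{EQ:stabilizer}. Since $s_h$ couples only the nodal values of the $V_h^1$-components, weighted by the strictly positive coefficients $c_i := \alpha(\epsilon h_i^{d-2} + \mu h_i^d) > 0$, and since it is bilinear in both slots, the left-hand side splits into a sum over the internal nodes,
\[
 s_h([Q^{w^0_h}_h r^1_h]^1 - [Q^{w^0_h}_h v^1_h]^1,\, [P^{w^0_h}_h r^1_h]^1 - [P^{w^0_h}_h v^1_h]^1) = \sum_{i=1}^N c_i\, \delta^Q_i\, \delta^P_i,
\]
where $\delta^Q_i$ and $\delta^P_i$ denote the values at $\bx_i$ of the first and the second argument, respectively. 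Abbreviating $r_i := r^1_h(\bx_i)$, $v_i := v^1_h(\bx_i)$ and writing $\pi_i(t) := \max[a - \underline w_{hi},\, \min(t,\, b - \overline w_{hi})]$ for the projection of $t$ onto the interval $[a - \underline w_{hi}, b - \overline w_{hi}]$, the definitions \eqref{EQ:def_p} of $P$ and of $Q$ give $\delta^P_i = \pi_i(r_i) - \pi_i(v_i)$ and $\delta^Q_i = (r_i - \pi_i(r_i)) - (v_i - \pi_i(v_i))$.

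Because every $c_i$ is positive, it then suffices to show that each summand is nonnegative. Expanding the product, the node-wise claim
\[
 \big[(r_i - \pi_i(r_i)) - (v_i - \pi_i(v_i))\big]\,\big[\pi_i(r_i) - \pi_i(v_i)\big] \ge 0
\]
is equivalent to $(r_i - v_i)\,(\pi_i(r_i) - \pi_i(v_i)) \ge (\pi_i(r_i) - \pi_i(v_i))^2$. For fixed $w^0_h$ and fixed $i$, the map $t \mapsto \pi_i(t)$ is the metric projection onto a fixed convex interval, hence monotone and $1$-Lipschitz; this is precisely its firm nonexpansiveness and yields the inequality. I would verify it by a short case distinction: if $\pi_i(r_i) = \pi_i(v_i)$, both sides vanish; otherwise monotonicity forces $r_i - v_i$ to share the sign of $\pi_i(r_i) - \pi_i(v_i)$, so that the Lipschitz bound $|\pi_i(r_i) - \pi_i(v_i)| \le |r_i - v_i|$ gives the claim. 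Summing the nonnegative contributions weighted by the $c_i > 0$ finishes the argument.

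The only delicate point — and the main difference from \cite[Lem.~3.1]{BarrenecheaGPV23} — is that the truncation interval now varies from node to node through $\underline w_{hi}$ and $\overline w_{hi}$. This turns out to be harmless: at each node the interval depends only on the fixed datum $w^0_h$ and on $i$, not on the input function, so both $r_i$ and $v_i$ are projected onto the very same interval, and the firm-nonexpansiveness argument applies unchanged node by node. Thus the extra generality introduces no genuine obstacle, which is exactly why the proof remains structurally identical to the cited one.
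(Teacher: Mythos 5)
Your proof is correct and is essentially the paper's own approach: the paper omits the argument, stating it is identical to the proof of \cite[Lem.~3.1]{BarrenecheaGPV23}, and that proof is precisely your node-wise reduction to the monotonicity and firm nonexpansiveness of the scalar truncation, weighted by the positive coefficients in $s_h$. Your closing observation---that the node-dependent intervals $[a-\underline w_{hi},\, b-\overline w_{hi}]$ are harmless because both $r^1_h(\bx_i)$ and $v^1_h(\bx_i)$ are projected onto the \emph{same} interval at each node---is exactly what justifies the paper's claim that the extension to this more involved case requires no new ideas.
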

\newcommand*{\wmaxi}{\overline{w}_{hi}}
\newcommand*{\wmini}{\underline{w}_{hi}}
\begin{proof}
 Proving the inequality works by going through the below cases. While each of the cases is easy to verify (see also \cite[Lem.\ 3.1]{BarrenecheaGPV23} to this end), it is essential to observe that they cover all possible combinations that $r_h^1$ and $v^1_h$ can create:
 \begin{itemize}
  \item $r_1, v_1 \ge b - \wmaxi$ or $r_1, v_1 \le  a - \wmini$:
  In this case $[P^{w^0_h}_h r^1_h]^1 = [P^{w^0_h}_h v^1_h]^1$.
  \item $a - \wmini \le r_1, v_1 \le b - \wmaxi$:
  In this case $[Q^{w^0_h}_h r^1_h]^1 = [Q^{w^0_h}_h v^1_h]^1$.
  \item $v_1 \le a - \wmini \le r_1$:
  $[P^{w^0_h}_h v^1_h]^1 = a - \wmini$,  $[P^{w^0_h}_h r^1_h]^1 \ge a - \wmini$, $[Q^{w^0_h}_h v^1_h]^1 \le 0$, $[Q^{w^0_h}_h r^1_h]^1 \ge 0$.
  \item $r_1 \le a - \wmini \le v_1$:
  $[P^{w^0_h}_h r^1_h]^1 = a - \wmini$,  $[P^{w^0_h}_h v^1_h]^1 \ge a - \wmini$, $[Q^{w^0_h}_h r^1_h]^1 \le 0$, $[Q^{w^0_h}_h v^1_h]^1 \ge 0$.
  \item $v_1 \le b - \wmaxi \le r_1$:
  $[P^{w^0_h}_h v^1_h]^1 = b - \wmaxi$,  $[P^{w^0_h}_h r^1_h]^1 \le b - \wmaxi$, $[Q^{w^0_h}_h v^1_h]^1 \ge 0$, $[Q^{w^0_h}_h r^1_h]^1 \le 0$.
  \item $r_1 \le b - \wmaxi \le v_1$:
  $[P^{w^0_h}_h r^1_h]^1 \le b - \wmaxi$,  $[P^{w^0_h}_h v^1_h]^1 = a - \wmini$, $[Q^{w^0_h}_h r^1_h]^1 \le 0$, $[Q^{w^0_h}_h v^1_h]^1 \ge 0$.
 \end{itemize}
\end{proof}

\begin{lemma}\label{LEM:T1_wellposed}
 For given $w^0_h \in V^0_h$ let us define the operator $\tilde T_1^{}\colon V^1_h \to [V^1_h]^\ast \simeq V^1_h$ via
 \begin{equation}\label{EQ:tilde_T}
  [\tilde T_1^{} r^1_h, v^1_h] = a_h([P^{w^0_h}_h r_h]^1, v^1_h) + s_h([Q^{w^0_h}_h r_h]^1, v^1_h) \qquad \text{ for all } v_h \in V^1_h.
 \end{equation}
 Then, $\tilde T_1^{}$ is continuous. Moreover, if
 $\alpha \ge C_\textup{equiv}$, $\tilde T_1^{}$ is 
strongly monotone in the sense that there is $C_M^{} > 0$ with
 \begin{equation}\label{EQ:strongly_monotone}
  [\tilde T_1^{} v^1_h - \tilde T_1^{} r^1_h, v^1_h - r^1_h] \ge C_M \| v^1_h - r^1_h \|^2_{a_h} \qquad \text{ for all } v^1_h, r^1_h \in V^1_h.
 \end{equation}
\end{lemma}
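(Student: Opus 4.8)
The plan is to prove the two assertions separately, exploiting throughout that in \eqref{EQ:tilde_T} every argument of $a_h$ and $s_h$ lies in $V_h^1$. Since functions in $V_h^1$ are continuous and vanish on $\partial\Omega$, one has $\jump{v_h^1}=0$ on every facet, so all average and jump contributions in $a_h$ drop out and $a_h(w_h^1,v_h^1)=\epsilon(\nabla w_h^1,\nabla v_h^1)+\mu(w_h^1,v_h^1)$ on $V_h^1\times V_h^1$; in particular $a_h(v_h^1,v_h^1)=\|v_h^1\|_{a_h}^2$ and $|a_h(w_h^1,v_h^1)|\le\|w_h^1\|_{a_h}\|v_h^1\|_{a_h}$ by Cauchy--Schwarz, i.e.\ $a_h$ restricted to $V_h^1$ is exactly the inner product inducing $\|\cdot\|_{a_h}$. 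Continuity of $\tilde T_1^{}$ is then straightforward: each nodal truncation $z_h^1\mapsto P^{w^0_h}_i(z_h^1)=\max[a-\underline w_{hi},\min(z_h^1(\bx_i),b-\overline w_{hi})]$ is a composition of clippings, hence $1$-Lipschitz in $z_h^1(\bx_i)$, so that $z_h^1\mapsto[P^{w^0_h}_hz_h^1]^1$ and $z_h^1\mapsto[Q^{w^0_h}_hz_h^1]^1=z_h^1-[P^{w^0_h}_hz_h^1]^1$ are Lipschitz maps of $V_h^1$ into itself; composing with the bounded bilinear forms $a_h,s_h$ shows that \eqref{EQ:tilde_T} depends (Lipschitz-)continuously on $r_h^1$ for each fixed $v_h^1$.

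The heart of the proof is \eqref{EQ:strongly_monotone}. Writing $\delta=v_h^1-r_h^1$, $\delta_P=[P^{w^0_h}_hv_h^1]^1-[P^{w^0_h}_hr_h^1]^1$ and $\delta_Q=[Q^{w^0_h}_hv_h^1]^1-[Q^{w^0_h}_hr_h^1]^1$, one has $\delta=\delta_P+\delta_Q$. Using bilinearity of $a_h,s_h$ and then substituting $\delta=\delta_P+\delta_Q$ in the second argument yields
\[
[\tilde T_1^{}v_h^1-\tilde T_1^{}r_h^1,\delta]=a_h(\delta_P,\delta_P)+a_h(\delta_P,\delta_Q)+s_h(\delta_Q,\delta_P)+s_h(\delta_Q,\delta_Q).
\]
By bilinearity the term $s_h(\delta_Q,\delta_P)$ equals $s_h([Q^{w^0_h}_hr_h^1]^1-[Q^{w^0_h}_hv_h^1]^1,[P^{w^0_h}_hr_h^1]^1-[P^{w^0_h}_hv_h^1]^1)$, which Lemma \ref{LEM:generalize_Barranachea} shows to be nonnegative; hence it may be discarded.

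The remaining obstacle is the indefinite cross term $a_h(\delta_P,\delta_Q)$, and this is where the hypothesis $\alpha\ge C_\textup{equiv}$ and the relation-of-bilinear-forms lemma must be combined. Using $a_h(\delta_P,\delta_P)=\|\delta_P\|_{a_h}^2$, Young's inequality $|a_h(\delta_P,\delta_Q)|\le\tfrac12\|\delta_P\|_{a_h}^2+\tfrac12\|\delta_Q\|_{a_h}^2$, and the relation $\|\delta_Q\|_{a_h}^2\le(C_\textup{equiv}/\alpha)^2\,s_h(\delta_Q,\delta_Q)\le s_h(\delta_Q,\delta_Q)$, I obtain
\[
[\tilde T_1^{}v_h^1-\tilde T_1^{}r_h^1,\delta]\ge\tfrac12\|\delta_P\|_{a_h}^2+\tfrac12\,s_h(\delta_Q,\delta_Q).
\]
Finally, the triangle inequality together with the same norm relation gives $\|\delta\|_{a_h}^2\le2\|\delta_P\|_{a_h}^2+2\|\delta_Q\|_{a_h}^2\le2\|\delta_P\|_{a_h}^2+2\,s_h(\delta_Q,\delta_Q)$, so the right-hand side above is at least $\tfrac14\|\delta\|_{a_h}^2$; this proves \eqref{EQ:strongly_monotone} with $C_M=\tfrac14$. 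I expect the main difficulty to be precisely this conversion of the $a_h$-energy of the ``overflow'' component $\delta_Q$ into a controllable $s_h$-contribution: it is the interplay of the sign property of Lemma \ref{LEM:generalize_Barranachea} with the largeness of $\alpha$ that renders the cross term harmless and closes the estimate.
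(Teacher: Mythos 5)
Your proof is correct and is essentially the paper's own argument written out in full: the paper proves this lemma simply by citing \cite[Th.~3.2]{BarrenecheaGPV23} with the sign property used there replaced by Lemma~\ref{LEM:generalize_Barranachea}, and that cited argument is exactly your decomposition $\delta=\delta_P+\delta_Q$, the discarding of the nonnegative term $s_h(\delta_Q,\delta_P)\ge 0$ via Lemma~\ref{LEM:generalize_Barranachea}, and the absorption of the cross term $a_h(\delta_P,\delta_Q)$ by Young's inequality combined with the relation $\|\delta_Q\|_{a_h}\le (C_\textup{equiv}/\alpha)\|\delta_Q\|_{s_h}$ and the hypothesis $\alpha\ge C_\textup{equiv}$. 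Both your continuity argument (nodal clippings are $1$-Lipschitz on the finite-dimensional space $V_h^1$) and the resulting constant $C_M=\tfrac14$ check out.
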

\begin{proof}
 The proof follows exactly the same lines as that of \cite[Thm.\ 3.2]{BarrenecheaGPV23} with \cite[Lem.\ 3.1: (25)]{BarrenecheaGPV23} replaced by Lemma \ref{LEM:generalize_Barranachea}.
\end{proof}

\subsection{Well-posedness of the iteration defined in \eqref{EQ:fixed_point}}
\begin{lemma}[Well-posedness of \eqref{EQ:T_1}]\label{EQ:wellposed_T1_real}
 Under the assumptions of Lemma \ref{LEM:T1_wellposed}, the equation \eqref{EQ:T_1} uniquely defines $w^1_h \in V^1_h$ for a given $w^0_h$. 
Moreover, if $\hat w^0_h, \bar w^0_h \in V^0_h$, and $\hat w^1_h, \bar w^1_h \in V^1_h$ are their images under $\tilde T_1$, then  the following Lipschitz continuity holds
 \begin{equation*}
  \| \hat w^1_h - \bar w^1_h \|_{a_h} \le \tfrac{1}{C_M} \| \hat w^0_h - \bar w^0_h \|_{a_h}.
 \end{equation*}
\end{lemma}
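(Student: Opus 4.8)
The plan is to prove existence and uniqueness via the theory of monotone operators, and then derive the Lipschitz bound directly from strong monotonicity. First I would address well-posedness of Step 1. By Lemma~\ref{LEM:T1_wellposed}, the operator $\tilde T_1^{}\colon V_h^1\to[V_h^1]^\ast$ is continuous and, under the assumption $\alpha\ge C_\textup{equiv}$, strongly monotone with respect to $\|\cdot\|_{a_h}$. Since $V_h^1$ is a finite-dimensional Hilbert space (equipped with the inner product inducing $\|\cdot\|_{a_h}$), the Browder--Minty theorem (or, in finite dimensions, a standard corollary of Brouwer's fixed-point theorem for strongly monotone continuous operators) guarantees that $\tilde T_1^{}$ is a bijection. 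Hence the equation $\tilde T_1^{} w_h^1 = F$, where $F\in[V_h^1]^\ast$ is the functional $v_h^1\mapsto (f,v_h^1)_\mesh - a_h(w_h^0,v_h^1)$, has a unique solution $w_h^1\in V_h^1$. This is exactly equation~\eqref{EQ:T_1}, so $\tilde T_1$ uniquely defines $w_h^1$ from the given $w_h^0$.

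For the Lipschitz estimate, I would exploit strong monotonicity together with the specific structure of the right-hand side. Let $\hat w_h^0,\bar w_h^0\in V_h^0$ be given, with corresponding solutions $\hat w_h^1,\bar w_h^1\in V_h^1$. Writing out \eqref{EQ:T_1} for each and subtracting, we get for all $v_h^1\in V_h^1$
\begin{equation*}
 [\tilde T_1^{}\hat w_h^1 - \tilde T_1^{}\bar w_h^1, v_h^1] = -\,a_h(\hat w_h^0 - \bar w_h^0, v_h^1)\,.
\end{equation*}
The crucial observation is that although $\tilde T_1$ depends on $w_h^0$ through the projection operators $P_h^{w_h^0}$ and $Q_h^{w_h^0}$, the difference on the left is evaluated using the \emph{same} operator if the data $w_h^0$ were fixed; since here the data differ, I must be slightly careful. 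The cleaner route is to test with $v_h^1 = \hat w_h^1 - \bar w_h^1$, apply the strong monotonicity~\eqref{EQ:strongly_monotone} to bound the left-hand side from below by $C_M\|\hat w_h^1-\bar w_h^1\|_{a_h}^2$, and bound the right-hand side from above using the Cauchy--Schwarz-type continuity of $a_h$ in the $\|\cdot\|_{a_h}$ norm, namely $|a_h(\hat w_h^0-\bar w_h^0, v_h^1)|\le \|\hat w_h^0-\bar w_h^0\|_{a_h}\|v_h^1\|_{a_h}$.

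Combining these two bounds yields
\begin{equation*}
 C_M\,\|\hat w_h^1-\bar w_h^1\|_{a_h}^2 \le \|\hat w_h^0-\bar w_h^0\|_{a_h}\,\|\hat w_h^1-\bar w_h^1\|_{a_h}\,,
\end{equation*}
and dividing by $\|\hat w_h^1-\bar w_h^1\|_{a_h}$ (the case of equality being trivial) gives the claimed Lipschitz constant $1/C_M$. The main obstacle I anticipate is the subtlety flagged above: the strong monotonicity in Lemma~\ref{LEM:T1_wellposed} is stated for a \emph{fixed} $w_h^0$, whereas the Lipschitz argument compares solutions arising from \emph{different} data $\hat w_h^0\neq\bar w_h^0$. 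The resolution is that the projection-dependent terms collected in $\tilde T_1$ can be separated from the purely linear coupling term $a_h(w_h^0,v_h^1)$: the latter is what changes with the data and is moved to the right-hand side, while the monotone nonlinear part retains its structure once one verifies that the relevant monotonicity inequality (underpinned by Lemma~\ref{LEM:generalize_Barranachea}) continues to hold for the difference of solutions. Making this separation rigorous—confirming that the left-hand difference still enjoys the lower bound~\eqref{EQ:strongly_monotone} despite the data-dependence of the projections—is the step requiring the most care, and it is precisely where the monotonicity of the truncation operators proved in Lemma~\ref{LEM:generalize_Barranachea} does the essential work.
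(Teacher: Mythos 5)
Your core argument is, in substance, the paper's own proof: existence and uniqueness are obtained from the continuity and strong monotonicity of $\tilde T_1$ established in Lemma~\ref{LEM:T1_wellposed} (the paper cites a monotone-operator theorem from Renardy--Rogers where you invoke Browder--Minty; in finite dimensions these are the same tool), and the Lipschitz bound is derived exactly as you describe, by subtracting the two instances of \eqref{EQ:T_1}, testing with $\hat w_h^1-\bar w_h^1$, applying \eqref{EQ:strongly_monotone} on the left and the continuity of $a_h$ on the right. (Both you and the paper tacitly take the continuity constant of $a_h$ relative to $\|\cdot\|_{a_h}$ to be one in order to arrive at the clean constant $1/C_M$; this is cosmetic.)

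The one place where you depart from the paper is the caveat you raise, so it is worth being precise about its status. The paper does \emph{not} resolve it: its displayed chain applies \eqref{EQ:strongly_monotone} to $[\tilde T_1\hat w_h^1-\tilde T_1\bar w_h^1,\hat w_h^1-\bar w_h^1]$ while simultaneously substituting the two equations \eqref{EQ:T_1}, which carry \emph{different} constant parts $\hat w_h^0\neq\bar w_h^0$ inside the projections, i.e.\ two different operators. Your worry is therefore legitimate; however, your proposed resolution does not work as stated. Lemma~\ref{LEM:generalize_Barranachea} is formulated (and is only true) for a \emph{single} $w_h^0$ appearing in both $P_h^{w_h^0}$ and $Q_h^{w_h^0}$: with two different constant parts the nodal truncation intervals $[\,a-\underline{\hat w}_{hi},\,b-\overline{\hat w}_{hi}\,]$ and $[\,a-\underline{\bar w}_{hi},\,b-\overline{\bar w}_{hi}\,]$ differ, and the nodal sign condition underlying that lemma can fail (take equal nodal values of the two linear parts and truncation intervals lying on opposite sides of that value; the product of differences is then strictly negative). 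Two honest ways to close the argument are: (i) read the lemma, as its constant $1/C_M$ suggests, with one fixed operator $\tilde T_1$ (one $w_h^0$ in the projections) and only the right-hand-side datum $a_h(w_h^0,\cdot)$ varying, in which case your and the paper's computation is rigorous; or (ii) treat the full Step-1 solution map, in which case one must additionally estimate the operator perturbation, i.e.\ the difference of \eqref{EQ:tilde_T} built with $\hat w_h^0$ and with $\bar w_h^0$ applied to the same $\bar w_h^1$; since the nodal truncations are $1$-Lipschitz in their bounds this term is still controlled by $\|\hat w_h^0-\bar w_h^0\|$, but with an $h$-dependent constant rather than $1/C_M$ --- which is nonetheless enough for the continuity of $\widetilde T$ needed for Brouwer's theorem in Theorem~\ref{TH:existence}. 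In short: your proof coincides with the paper's; the gap you flagged is real but equally present in the paper, and the single new ingredient you propose to close it is the one step that does not survive scrutiny.
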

\begin{proof}
The existence and uniqueness of the solution of \eqref{EQ:T_1} are a direct consequence of the strong monotonicity and continuity of $\tilde T_1^{}$ (see, e.g., \cite[Thm.~10.49]{RenardyR04}). To prove the Lipschitz continuity, let $\hat w_h^0,\bar w_h^0\in V_h^0$ and $\hat w_h^1,\bar w_h^1\in V_h^1$ be their images under $\tilde T_1^{}$. Then, using \eqref{EQ:strongly_monotone} and the problem \eqref{EQ:T_1} we get to 
 \begin{equation*}
  C_M \| \hat w^1_h - \bar w^1_h \|_{a_h}^2 \le [\tilde T_1^{} \hat w^1_h, \hat w^1_h - \bar w^1_h] - [\tilde T_1^{} \bar w^1_h,\hat w^1_h - \bar w^1_h] = a_h(\bar w^0_h - \hat w^0_h, \hat w^1_h - \bar w^1_h),
 \end{equation*}
and the proof is finished using the continuity of $a_h(\cdot,\cdot)$.
\end{proof}

Once the proof that Step~1 is well-defined, we prove that the second step in the definition of $\widetilde T$ is also well-posed, thus proving that $\widetilde T$ is well-defined.

\begin{lemma}[Well-posedness of \eqref{EQ:T_2}]\label{LEM:t2_wellposed}
For any given $w^1_h \in V^1_h$, there exists a unique $\tilde w^0_h \in V^0_h$ solution of \eqref{EQ:T_2}. Moreover, if $\hat w^{1+}_h, \bar w^{1+}_h \in V^1_h$ and $\hat w^0_h, \bar w^0_h \in V^0_h$ are the solutions of \eqref{EQ:T_2} with  $\hat w^{1+}_h$ and $\bar w^{1+}_h$ as right-hand sides, respectively, the following Lipschitz continuity holds
 \begin{equation*}
 \| \hat w^0_h - \bar w^0_h \|_{a_h} \le \| \hat w^{1+}_h - \bar w^{1+}_h \|_{a_h}.
 \end{equation*}
\end{lemma}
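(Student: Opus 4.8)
The plan is to exploit the fact that, on the piecewise-constant space $V^0_h$, the bilinear form $a_h$ collapses to a simple, manifestly positive-definite form, turning \eqref{EQ:T_2} into a standard coercive linear problem; and then to read off the Lipschitz constant from a Cauchy--Schwarz inequality for the inner product induced by $a_h$. First I would record the reduction of $a_h$ on $V^0_h$. For $v^0_h, w^0_h \in V^0_h$ the gradients vanish, so the volume diffusion term and both consistency terms $\langle \mean{\epsilon\nabla\cdot}, \jump{\cdot} \rangle$ drop out, leaving
\[
a_h(w^0_h, v^0_h) = \mu(w^0_h, v^0_h)_\mesh + \sum_{F\in\facets}\gamma\frac{\epsilon+\mu h_\face^2}{h_\face^{\beta}}\int_F \jump{w^0_h}\cdot\jump{v^0_h}\ds.
\]
In particular $a_h(v^0_h, v^0_h) = \| v^0_h \|^2_{a_h}$, and since $\mu>0$ this quadratic form is positive definite on the finite-dimensional space $V^0_h$. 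Because the $s_h$-term on the right-hand side of \eqref{EQ:T_2} vanishes (the $V^1_h$-component of a test function $v^0_h\in V^0_h$ is zero), the right-hand side is a genuine linear functional on $V^0_h$, so existence and uniqueness of $\tilde w^0_h$ follow from Lax--Milgram, equivalently from invertibility of a symmetric positive-definite matrix.

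For the Lipschitz estimate I would subtract the two instances of \eqref{EQ:T_2}. Writing $e^0 = \hat w^0_h - \bar w^0_h \in V^0_h$ and $g = \hat w^{1+}_h - \bar w^{1+}_h \in V^1_h$, and using again that the $s_h$-contributions are identically zero, the difference of the right-hand sides gives the error identity
\[
a_h(e^0, v^0_h) = -a_h(g, v^0_h) \qquad \forall\, v^0_h \in V^0_h.
\]
Testing with $v^0_h = e^0$ and using $a_h(e^0,e^0)=\|e^0\|^2_{a_h}$ yields $\|e^0\|^2_{a_h} = -a_h(g,e^0)$, so everything reduces to bounding the cross term $a_h(g,e^0)$.

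To close the argument I would invoke that, under the standing assumption that $\gamma$ is sufficiently large, \eqref{Eq:a-elliptic} shows $a_h$ to be symmetric and positive definite on all of $V_h$, i.e.\ a genuine inner product, so the Cauchy--Schwarz inequality $|a_h(g,e^0)| \le a_h(g,g)^{1/2}\,a_h(e^0,e^0)^{1/2}$ applies. The key point giving the sharp constant $1$ is that the diagonal values coincide \emph{exactly} with the $\|\cdot\|_{a_h}$-norm on each component space: on $V^0_h$ this was just observed, while for $g\in V^1_h\subset C(\overline\Omega)\cap H^1_0(\Omega)$ all jumps $\jump{g}$ vanish, whence the consistency and penalty terms disappear and $a_h(g,g)=\epsilon\|\nabla g\|^2_\mesh+\mu\|g\|^2_\mesh=\|g\|^2_{a_h}$. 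Combining these,
\[
\|e^0\|^2_{a_h} \le \|g\|_{a_h}\,\|e^0\|_{a_h},
\]
and dividing by $\|e^0\|_{a_h}$ (the bound being trivial if this vanishes) gives the claim $\|\hat w^0_h - \bar w^0_h\|_{a_h} \le \|\hat w^{1+}_h - \bar w^{1+}_h\|_{a_h}$.

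I expect the only delicate point — the ``main obstacle'' in spirit — to be securing the constant \emph{exactly} equal to $1$ rather than a mesh- or parameter-dependent factor. This hinges on the two exact identities $a_h(\cdot,\cdot)=\|\cdot\|^2_{a_h}$ on $V^0_h$ and on $V^1_h$ (from the vanishing of jumps for continuous functions and of gradients for constants), together with applying Cauchy--Schwarz to the true bilinear form rather than to its coercivity lower bound; had one instead only used the ellipticity constant $c_0$ from \eqref{Eq:a-elliptic}, the resulting estimate would be strictly weaker.
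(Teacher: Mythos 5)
Your proof is correct and takes essentially the same route as the paper's: reduce $a_h$ on $V^0_h$ to the mass-plus-penalty form to get well-posedness, then subtract the two instances of \eqref{EQ:T_2}, test with the difference, and apply Cauchy--Schwarz to the bilinear form itself. The only difference is that you make explicit what the paper leaves implicit, namely that the constant is exactly $1$ because $a_h$ is positive semi-definite on all of $V_h$ (for $\gamma$ sufficiently large, via \eqref{Eq:a-elliptic}) and agrees exactly with $\|\cdot\|_{a_h}^2$ on the diagonal of both $V^0_h$ (gradients vanish) and $V^1_h$ (jumps vanish).
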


\begin{proof} For $w_h^0,v_h^0\in V_h^0$ the bilinear form on the left-hand side of \eqref{EQ:T_2} reduces to
 \begin{equation}\label{EQ:lhs_T2}
  a_h(w^0_h, v^0_h) = \mu (w^0_h, v^0_h)_\mesh + \gamma \left\langle \frac{\epsilon+\mu h_\face^2}{h_\face^{\beta}} \jump{w^0_h},\jump{v^0_h} \right\rangle_\facets \qquad \text{ for all } w^0_h, v^0_h \in V^0_h.
 \end{equation}
This form is  continuous and $\| \jump{\cdot} \|_\facets^{}$-elliptic with ellipticity constant 
$\gamma h^{1-\beta}$. Thus, the problem in \eqref{EQ:T_2} is well-posed. To prove the Lipschitz continuity, we consider \eqref{EQ:T_2} with the two different right-hand sides, subtract the equations, and get to
 \begin{equation*}
     a_h^{}(\hat w^0_h- \bar w^0_h, v_h^0)= a_h^{}(\hat w^{1+}_h-  \bar w^{1+}_h, v_h^0),
 \end{equation*}
for every $v_h^0\in V_h^0$. Taking $v_h^0= \hat w^0_h- \bar w^0_h$ and using Cauchy-Schwarz's inequality on the right-hand side above leads to the result.
\end{proof}

\subsection{Proof of Theorem \ref{TH:existence}}\label{SEC:proof_existence}
We start proving the following a priori stability result for the solution of \eqref{EQ:T_2}.

\begin{lemma}\label{LEM:stab2}
Let $\beta\ge 2$, $w^1_h \in V^1_h$ be given, and let $\tilde w^0_h \in V^0_h$ be the
solution of \eqref{EQ:T_2} with $w^1_h \in V^1_h$ on the right-hand side. Then, there exists a
constant $C>0$, depending only on the mesh regularity, such that
 \begin{equation*}
 \| \jump{\tilde w^0_h} \|^2_\facets \le\, C\, \frac{h^{2\beta- 4}}{\gamma^2} \left[ C_P^2\mu^{-1}\| f \|^2_{0,\Omega} + (C_P^2\mu+\epsilon)\| w^{1+}_h \|^2_{0,\Omega}  \right].
 \end{equation*}
\end{lemma}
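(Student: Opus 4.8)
The plan is to test the equation \eqref{EQ:T_2} with $v_h^0 = \tilde w_h^0$ itself, using the simplified form of the left-hand side given in \eqref{EQ:lhs_T2}. This gives
\[
\mu \|\tilde w_h^0\|_{0,\Omega}^2 + \gamma \sum_{F\in\facets}\frac{\epsilon+\mu h_F^2}{h_F^\beta}\|\jump{\tilde w_h^0}\|_{0,F}^2 = (f,\tilde w_h^0)_\mesh - a_h(w_h^{1+},\tilde w_h^0)\,.
\]
On the left-hand side I can drop the nonnegative $\mu$-mass term and bound the skeleton term from below: since $\frac{\epsilon+\mu h_F^2}{h_F^\beta} = h_F^{1-\beta}\cdot\frac{\epsilon+\mu h_F^2}{h_F}$, and using quasi-uniformity to replace $h_F$ by $h$ up to mesh-regularity constants, the left-hand side controls $\gamma h^{1-\beta}\|\jump{\tilde w_h^0}\|_\facets^2$ from below (recalling the definition of $\|\cdot\|_\facets$ in \eqref{Def:Disc-Norms}). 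Thus the whole task reduces to bounding the two terms on the right by quantities involving $\|\jump{\tilde w_h^0}\|_\facets$ linearly, so that one power can be absorbed into the left.

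For the right-hand side I would treat the two terms separately. The term $(f,\tilde w_h^0)_\mesh$ is estimated by Cauchy–Schwarz as $\|f\|_{0,\Omega}\|\tilde w_h^0\|_{0,\Omega}$, and then the broken Poincaré inequality (Lemma \ref{LEM:poincare}) converts $\|\tilde w_h^0\|_{0,\Omega}$ into $C_P\{\sum_F h_F^{-1}\|\jump{\tilde w_h^0}\|_{0,F}^2\}^{1/2}$; relating $h_F^{-1}$ to the weight $\frac{\epsilon+\mu h_F^2}{h_F}$ appearing in $\|\cdot\|_\facets$ (here $\mu$ appears because of the $\mu h_F^2$ summand, which explains the $C_P^2\mu^{-1}$ and $C_P^2\mu$ factors in the claimed bound) yields a constant times $\|\jump{\tilde w_h^0}\|_\facets$. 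For the term $a_h(w_h^{1+},\tilde w_h^0)$, since $w_h^{1+}$ is continuous its jumps vanish and the consistency/penalty jump terms involving $\jump{w_h^{1+}}$ drop out; what remains are the volume terms $(\mu w_h^{1+},\tilde w_h^0)$ and $-\langle\mean{\epsilon\nabla w_h^{1+}},\jump{\tilde w_h^0}\rangle$ (the $(\epsilon\nabla w_h^{1+},\nabla\tilde w_h^0)$ piece is zero as $\tilde w_h^0$ is piecewise constant). The mass term is handled exactly as the $f$-term, producing the $\mu$-weighted contribution of $\|w_h^{1+}\|_{0,\Omega}$. The consistency term is the delicate one: I would apply the trace-inverse inequality \eqref{Local:trace-inverse} to $\nabla w_h^{1+}$ to move it into an element norm, then the inverse inequality from Lemma \ref{LEM:stab1} to convert the gradient into $\|w_h^{1+}\|_{0,\Omega}$, each face term being paired against $\|\jump{\tilde w_h^0}\|_{0,F}$ and reweighted to match $\|\cdot\|_\facets$, producing the $\epsilon$-weighted contribution.

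Collecting, the right-hand side is bounded by $C\big(C_P\mu^{-1/2}\|f\|_{0,\Omega} + (C_P\mu^{1/2}+\epsilon^{1/2})\|w_h^{1+}\|_{0,\Omega}\big)\|\jump{\tilde w_h^0}\|_\facets$ up to mesh constants, while the left-hand side is at least $c\,\gamma h^{1-\beta}\|\jump{\tilde w_h^0}\|_\facets^2$. Dividing through by $\|\jump{\tilde w_h^0}\|_\facets$ and by $\gamma h^{1-\beta}$, then squaring, gives precisely the stated estimate with the factor $h^{2\beta-4}/\gamma^2$ — note $(h^{\beta-1})^2 = h^{2\beta-2}$ against a residual $h^{-2}$ from the Poincaré/trace weight bookkeeping yields the $2\beta-4$ exponent. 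The main obstacle I anticipate is the careful tracking of the mesh-size weights: one must convert between the bare $h_F^{-1}$ from Poincaré, the $h_F^{1-\beta}$ penalty scaling, and the trace-inverse factor $h_T^{-1/2}$ squared, and verify using quasi-uniformity that all these combine to the single clean power $h^{2\beta-4}$ while keeping the $\epsilon$- and $\mu$-dependence matched to the asserted $(C_P^2\mu+\epsilon)$ and $C_P^2\mu^{-1}$ coefficients. The condition $\beta\ge 2$ presumably enters to ensure the exponent $2\beta-4\ge 0$ so the bound degrades controllably (rather than blows up) as $h\to 0$.
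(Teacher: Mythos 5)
Your plan is correct and follows essentially the same route as the paper's proof: test \eqref{EQ:T_2} with $\tilde w^0_h$, use the ellipticity constant $\gamma h^{1-\beta}$ of the reduced form \eqref{EQ:lhs_T2}, expand $a_h(w^{1+}_h,\tilde w^0_h)$ into its surviving mass and consistency terms, handle the $f$- and $\mu$-terms via the broken Poincar\'e inequality with the weight conversion $h_F^{-1}\lesssim (\mu h_F^2)^{-1}\tfrac{\epsilon+\mu h_F^2}{h_F}$, handle the $\epsilon$-consistency term via the trace-inverse inequality \eqref{Local:trace-inverse} followed by the inverse inequality of Lemma~\ref{LEM:stab1}, and then divide and square to obtain the $h^{2\beta-4}/\gamma^2$ factor. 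The weight bookkeeping you flag as the main obstacle works out exactly as you predict, each right-hand-side term carrying one factor $h^{-1}$, which combined with the squared division by $\gamma h^{1-\beta}$ gives the stated exponent.
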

\begin{proof}
 Let $v^0_h = \tilde w^0_h$ in \eqref{EQ:T_2} and $\tilde \gamma = \tfrac{\gamma}{h^{\beta - 1}}$. Then, using \eqref{EQ:lhs_T2}, the ellipticity of $a_h(\cdot,\cdot)$ in $V_h^0$, \eqref{EQ:T_2}, Cauchy-Schwarz's inequality, the local trace-inverse inequality \eqref{Local:trace-inverse}, the broken Poincar\'e inequality from Lemma~\ref{LEM:poincare}, and a global inverse inequality,  we get to
 \begin{align*}
&\| \sqrt\mu  \tilde w^0_h \|^2_{0,\Omega} + \tilde \gamma \| \jump{ \tilde w^0_h } \|^2_\facets  = a_h(\tilde w^0_h, \tilde w^0_h) \\
  &=  (f, \tilde w^0_h)_\mesh^{} - a_h(w^{1+}_h, \tilde w^0_h) \\
  & = (f, \tilde w^0_h)_\mesh^{} - (\mu w^{1+}_h, \tilde w^0_h)_\mesh^{}- \langle \mean{\eps \nabla w_h^{1+}}, \jump{\tilde w^0_h} \rangle_\facets \\
  &\le C_P^{}\mu^{-\frac{1}{2}} h^{-1}\|f\|_{0,\Omega}^{}\| \jump{\tilde w^0_h} \|^{}_\facets
  + C_P^{}\sqrt{\mu}h^{-1}\|w^{1+}_h\|_{0,\Omega}^{}\| \jump{\tilde w^0_h }\|^{}_\facets
  + C\sqrt{\epsilon}\|\nabla w_h^{1+}\|_{0,\Omega}^{}\| \jump{ \tilde w^0_h} \|^{}_\facets\\
 &\le C\Big(C_P^{}\mu^{-\frac{1}{2}}h^{-1}\|f\|_{0,\Omega}^{}
  + C_P^{}\sqrt{\mu}h^{-1}\|w^{1+}_h\|_{0,\Omega}^{}
  + \sqrt{\epsilon}h^{-1}\|w_h^{1+}\|_{0,\Omega}^{}\Big)\| \jump{ \tilde w^0_h } \|^{}_\facets\,.
 \end{align*}
 The proof then follows from rearranging terms.
\end{proof}

\begin{lemma}\label{LEM:existence}
Let us assume that $\beta\ge 2$. Then, if $\gamma > 0$ is sufficiently large, $\tilde T \colon B_1(0) \to B_1(0)$, where $B_1(0) \subset V^0_h$ is the closed unit ball in $(V_h^0, \| \jump{\cdot} \|_\facets)$.
\end{lemma}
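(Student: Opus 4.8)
The plan is to verify the self-map property by chaining the a priori bounds of Lemmas~\ref{LEM:stab1} and~\ref{LEM:stab2} with the broken Poincaré inequality of Lemma~\ref{LEM:poincare}, and then absorbing the resulting (fixed) constant into the factor $\gamma^{-2}$ supplied by the over-penalization. First I would fix $w^0_h \in B_1(0)$, so that $\| \jump{w^0_h} \|_\facets \le 1$, write $w^1_h$ for its image under Step~1 of \eqref{EQ:fixed_point} and $\tilde w^0_h = T(w^0_h)$ for the output of Step~2. Since $\tilde w^0_h$ solves \eqref{EQ:T_2} with $w^{1+}_h$ on the right-hand side, Lemma~\ref{LEM:stab2} applies verbatim and yields
\[
\| \jump{\tilde w^0_h} \|^2_\facets \le C\,\frac{h^{2\beta-4}}{\gamma^2}\left[ C_P^2\mu^{-1}\| f \|^2_{0,\Omega} + (C_P^2\mu+\epsilon)\| w^{1+}_h \|^2_{0,\Omega} \right].
\]
The whole argument then reduces to bounding the bracketed quantity by a constant that does \emph{not} depend on $\gamma$.

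The factor $\| w^{1+}_h \|^2_{0,\Omega}$ is controlled by the second estimate of Lemma~\ref{LEM:stab1} (applied with $v^0_h = w^0_h$), namely $\| w^{1+}_h \|^2_{0,\Omega} \le C[\max(|a|,|b|)^2|\Omega| + \| w^0_h \|^2_{0,\Omega}]$, and $\| w^0_h \|^2_{0,\Omega}$ is in turn bounded through Lemma~\ref{LEM:poincare}. The only computation requiring a little care is the passage from the Poincaré weight $h_\face^{-1}$ to the weight $(\epsilon+\mu h_\face^2)/h_\face$ defining $\| \cdot \|_\facets$: since $\epsilon>0$ one has $h_\face^{-1} \le \epsilon^{-1}(\epsilon+\mu h_\face^2)h_\face^{-1}$ facet by facet, whence $\| w^0_h \|^2_{0,\Omega} \le C_P^2\epsilon^{-1}\| \jump{w^0_h} \|^2_\facets \le C_P^2\epsilon^{-1}$. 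Consequently
\[
\| w^{1+}_h \|^2_{0,\Omega} \le C\big[\max(|a|,|b|)^2|\Omega| + C_P^2\epsilon^{-1}\big] =: M,
\]
with $M$ depending only on the data, the mesh, and the physical parameters. The point I would emphasize is that $M$ is independent of $\gamma$: although $w^1_h$ might a priori depend on $\gamma$, the nodal values of $w^{1+}_h$ are by construction confined to $[a-\underline w_{hi},\, b-\overline w_{hi}]$, an interval determined solely by $w^0_h$ and the bounds $a,b$.

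Substituting this back gives $\| \jump{\tilde w^0_h} \|^2_\facets \le C\,\gamma^{-2}h^{2\beta-4}\big[C_P^2\mu^{-1}\| f \|^2_{0,\Omega} + (C_P^2\mu+\epsilon)M\big]$, a fixed multiple of $\gamma^{-2}$ (here $\beta\ge 2$ keeps $h^{2\beta-4}$ harmless), so choosing $\gamma$ large enough forces the right-hand side below $1$ and delivers $\tilde w^0_h \in B_1(0)$. I expect no serious technical obstacle, since every ingredient is already available and the weight conversion above is elementary; the crux is conceptual rather than computational, namely the observation that the bound $M$ on $\| w^{1+}_h \|_{0,\Omega}$ is $\gamma$-independent (thanks to the truncation) while Lemma~\ref{LEM:stab2} produces a prefactor decaying like $\gamma^{-2}$. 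This is precisely the mechanism by which the over-penalization makes the fixed-point map keep the piecewise-constant correction inside the unit ball.
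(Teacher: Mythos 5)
Your proposal is correct and follows essentially the same route as the paper's proof: both chain Lemma~\ref{LEM:stab2} with Lemma~\ref{LEM:stab1} and the broken Poincar\'e inequality of Lemma~\ref{LEM:poincare}, convert the unweighted jump sum into the $\| \jump{\cdot} \|_\facets$ norm (you use the facet-wise factor $\epsilon^{-1}$, the paper the slightly sharper $(\epsilon+\mu h^2)^{-1}$; both are $h$-uniform since $\epsilon>0$), use $\| \jump{w^0_h} \|_\facets \le 1$ on the input ball, and finally choose $\gamma$ large so that the $\gamma^{-2}$-prefactored right-hand side drops below $1$. The only difference is cosmetic---you invoke the unit-ball bound before choosing $\gamma$, while the paper phrases this as hiding the $\| \jump{w^0_h} \|^2_\facets$ term in the last step---and your explicit remark that the truncation makes the bound on $\| w^{1+}_h \|_{0,\Omega}$ independent of $\gamma$ is exactly the mechanism the paper relies on.
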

\begin{proof}
Using Lemma~\ref{LEM:stab1}, Lemma~\ref{LEM:stab2}, and the broken Poincar\'e inequality from Lemma~\ref{LEM:poincare} we get to 
\begin{align}\label{EQ:mean_stab} 
&\| \jump{ \tilde w^0_h } \|^2_\facets  \le \, C\,\frac{h^{2\beta- 4}}{\gamma^2} \left[ C_P^2\mu^{-1}\| f \|^2_{0,\Omega} + (C_P^2\mu+\epsilon)\| w^{1+}_h \|^2_{0,\Omega}  \right]\nonumber\\
& \le \, C\,\frac{h^{2\beta- 4}}{\gamma^2} \left[ C_P^2\mu^{-1}\| f \|^2_{0,\Omega} + (C_P^2\mu+\epsilon)\big(\max(|a|,|b|)^2 |\Omega| + \| w^0_h \|^2_{0,\Omega}\big)\right]\nonumber\\
& \le \, C\,\frac{h^{2\beta- 4}}{\gamma^2} \left[ C_P^2\mu^{-1}\| f \|^2_{0,\Omega} + (C_P^2\mu+\epsilon)\max(|a|,|b|)^2 |\Omega| +
C_P^2\frac{C_p^2\mu+\epsilon}{\epsilon+\mu h^2}\| \jump{ w^0_h } \|^2_{\facets}\right]\,.
\end{align}
The result is then proved by noticing that $\gamma$ can always be chosen large enough so that the last term in the right-hand side above can be hidden in the left-hand side, and the resulting right-hand side is smaller than $1$.
\end{proof}

The proof of Theorem~ \ref{TH:existence} appears as a corollary of the above results.

\begin{proof}[Proof of Theorem \ref{TH:existence}]
 Since the EG finite element space is finite-dimensional, the operator $\tilde T$ is continuous and maps the closed unit ball (a convex compact set) onto itself. Thus,  Brouwer's fixed-point Theorem (see, e.g. \cite[Thm.~10.41]{RenardyR04}) states that $\tilde T$ has at least one fixed point in $B_1(0)$, which is the desired result.
\end{proof}

\subsection{Worst-case criterion for positivity preservation}\label{SEC:positivity}
Lemma \ref{COR:positivity_preservation} implies that for each iterate $w_h \in V_h$ defined via \eqref{EQ:fixed_point}, we have $w^+_h \in \mathcal G$ if $\| w^0_h \|_{0,\infty,\Omega}^{} < \frac{b-a}2$. This section underlines that this criterion can always easily be satisfied if either $\beta$ is large enough, $\gamma$ is large enough, or $h$ is small enough. To this end, we start observing that using the inverse inequality, we get
\begin{equation*}
\| \tilde w^0_h \|_{0,\infty,\Omega}^{} \le 
C\,h^{-d/2} \| \tilde w^0_h \|_{0,\Omega},
\end{equation*}
which can be combined with Lemma \ref{LEM:poincare} and \eqref{EQ:mean_stab} to obtain
\begin{multline*}
 \| \tilde w^0_h \|_{0,\infty,\Omega}^{} \le\, C\,h^{-d/2} \| \tilde w^0_h \|_{0,\Omega} \le \, C\,h^{-d/2}\left\{\sum_{\face\in\facets}\frac{1}{h_\face^{}}\|\jump{w_h^0}\|^2_{0,\face}\right\}^{\frac{1}{2}} \le \, C \frac{h^{-d/2}}{\sqrt{\epsilon+\mu h^2}} \|w_h^0\|_{\facets}^{}\\
 \le \, C\,\frac{\min\{\epsilon^{-1/2},\frac{\mu^{-1/2}}h\}}{\gamma} h^{\beta-2-d/2}\left[ \tfrac{C_P^2}\mu \| f \|^2_{0,\Omega} + (C_P^2\mu+\epsilon)\big(\max(|a|,|b|)^2 |\Omega| +\|w_h^0\|_{0,\Omega}^2\big)\right]^{\frac{1}{2}}.
\end{multline*}
So, assuming that $\beta\ge 2+d/2$, or that $\gamma$ is large enough,  for $w_h^0\in V_h^0$ satisfying $ \| w^0_h \|^2_{0,\Omega}\le 1$, we conclude that $\| \tilde w^0_h \|_{0,\infty,\Omega}^{}\le (b-a)/2$, thus guaranteeing that the next iterate $\tilde w^+_h$ belongs to $\mathcal G$.

\section{Convergence order estimates}\label{SEC:conbergence}
%
In this section, we prove optimal order error estimates for the present bound-preserving EG method. The order of convergence appears as a result of the over-penalization of the jump terms. More precisely, the stability bound \eqref{EQ:stab_result} proven in Theorem~\ref{TH:existence} is an error estimate in its own right, providing optimal convergence for $u_h^0$ if ${\beta\ge 4}$, since in such a case $u^0_h \to 0$ is sufficiently fast. Thus, the approximation properties of the current EG method stem from those of the continuous finite element subspace, while the piecewise constant enrichment is responsible for the local mass conservation. A similar interpretation of the best approximation properties can be used to argue for the convergence of DG methods, which converge optimally since the jump terms in the solutions converge to zero sufficiently quickly (the continuous solution does not have discontinuities). 

We start by stating the following estimate for $u_h^0$. It is important to notice that this result is, in fact, a rewriting of Theorem~\ref{TH:existence}.
\begin{corollary}\label{Cor1}
    Under the assumptions of Theorem~\ref{TH:existence}, 
    Let $u_h^0$ be the piecewise constant part of a solution $u_h{}$ of \eqref{EQ:stab_prob}. Then, the following error estimate holds 
    \begin{equation}
        \| \jump{ u^0_h } \|^2_\facets 
\le \, C\,\frac{h^{{2\beta- 4}}}{\gamma^2} \left[ C_P^2\mu^{-1}\| f \|^2_{0,\Omega} + (C_P^2\mu+\epsilon)\max(|a|,|b|)^2 |\Omega| \right]\,.
\end{equation}
\end{corollary}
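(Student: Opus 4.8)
The plan is to recognize that the claimed inequality is verbatim the a priori bound \eqref{EQ:stab_result} already established in Theorem~\ref{TH:existence}, so no new estimate is required; the only point needing care is that the corollary is phrased for an \emph{arbitrary} solution $u_h$ of \eqref{EQ:stab_prob}, whereas \eqref{EQ:stab_result} was derived for the particular solution produced by the Brouwer argument. I would therefore first argue that the bound holds for every solution, by showing that the piecewise-constant part $u_h^0$ of any solution is a fixed point of the map $\widetilde T$.

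To do this, I would test \eqref{EQ:stab_prob} separately with $v_h \in V_h^1$ and with $v_h \in V_h^0$. Writing $u_h^+ = [u_h^+]^1 + u_h^0$ and using that $s_h(\cdot, v_h^0) = 0$, the $V_h^1$-test recovers exactly equation \eqref{EQ:T_1} with datum $w_h^0 = u_h^0$, while the $V_h^0$-test recovers \eqref{EQ:T_2} with right-hand side $w_h^{1+} = [u_h^+]^1$. By the uniqueness of the solutions of \eqref{EQ:T_1} and \eqref{EQ:T_2}, this forces $\widetilde T(u_h^0) = u_h^0$; that is, $u_h^0$ is a fixed point, and $u_h^1$ is the associated Step~1 output.

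Once this is in place, I would simply invoke the chain of estimates \eqref{EQ:mean_stab} from the proof of Lemma~\ref{LEM:existence} with $w_h^0 = \tilde w_h^0 = u_h^0$ (so that $w_h^{1+} = u_h^{1+}$), and then absorb the resulting $\| \jump{u_h^0} \|^2_\facets$ term on the right-hand side into the left, which is legitimate since $\gamma$ is assumed sufficiently large. This yields precisely the stated bound, with the constant $C$ updated to absorb the relevant factors.

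Since there is no genuinely new analysis, I do not expect a real obstacle here: the only subtlety is the fixed-point characterization of an arbitrary solution, and even that can be bypassed if the corollary is read as referring to the solution constructed in Theorem~\ref{TH:existence}, in which case it is an immediate restatement and follows directly from \eqref{EQ:stab_result}.
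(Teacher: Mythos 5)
Your proposal is correct and matches the paper's approach: the paper offers no separate proof, explicitly describing Corollary~\ref{Cor1} as a rewriting of the bound \eqref{EQ:stab_result} from Theorem~\ref{TH:existence}. Your additional argument—that the constant part of \emph{any} solution of \eqref{EQ:stab_prob} is a fixed point of $\widetilde T$ (by testing with $V_h^1$ and $V_h^0$ separately and invoking the uniqueness in the two well-posedness lemmas), so that Lemma~\ref{LEM:stab2} and the absorption step in \eqref{EQ:mean_stab} apply to it—is a valid refinement that closes the arbitrary-versus-constructed-solution gap the paper leaves implicit.
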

\begin{remark}
Thanks to the broken Poincar\'e inequality from Lemma~\ref{LEM:poincare}, we also have that $\|u_h^0\|_{0,\Omega}^{} \le C\, h^{\beta-{3/2}}$.
\end{remark}

We now state the main convergence result of this work.
\begin{theorem}\label{TH:convergence}
Let us assume that $u \in H^2(\Omega)$ solves \eqref{EQ:cont_prob}, $\beta\ge 4$, and that the assumptions of Theorem~\ref{TH:existence} hold. Then, the following error estimate holds
 \begin{equation*}
  \Big( \mu\| u - u^+_h \|_{0,\mathcal{T}_h}^2+\epsilon \|\nabla (u-u_h^+)\|_{0,\mathcal{T}_h}^2+\| \jump{ u_h^0 } \|_{\facets}^2\Big)^{1/2} \le C\,h(\sqrt{\mu} h+\sqrt{\epsilon})\, \Big( |u|_{2,\Omega} + \sqrt{\tilde{C}(f,a,b,\Omega)}\Big)\,,
 \end{equation*}
 where $\tilde{C}(f,a,b,\Omega)=\gamma^{-2}\left[ C_P^2\mu^{-1}\| f \|^2_{0,\Omega} + (C_P^2\mu+\epsilon)\max(|a|,|b|)^2 |\Omega| \right]$, and the constant $C$ does not depend on the mesh size, nor the physical coefficients of the problem. 
\end{theorem}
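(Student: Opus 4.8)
The plan is to combine consistency of the interior-penalty form with the ellipticity \eqref{Eq:a-elliptic}, and to neutralise the nonlinear stabilisation term by the monotonicity of the truncation (Lemma~\ref{LEM:generalize_Barranachea}); the jump contribution is then read off directly from Corollary~\ref{Cor1}. First I would record the error equation. Since $u\in H^2(\Omega)$ solves \eqref{EQ:cont_prob}, an elementwise integration by parts together with $\jump{u}=\boldsymbol 0$ on interior facets and $u=\uD$ on $\partial\Omega$ (the latter absorbed through $b_h$ and the extension $\tilde u_{\textup D}$) shows that the symmetric interior-penalty form is consistent, so $a_h(u,v_h)=b_h(v_h)$ for all $v_h\in V_h$. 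Subtracting \eqref{EQ:stab_prob} yields $a_h(u-u_h^+,v_h)=s_h(u_h^-,v_h)$ for all $v_h\in V_h$. I would then split the error through the Lagrange interpolant $\interpol_h u\in V_h^1$, setting $\eta=u-\interpol_h u$ and $e_h=u_h^+-\interpol_h u\in V_h$, so that $u-u_h^+=\eta-e_h$; since the linear parts are continuous, $\jump{e_h}=\jump{u_h^0}$ and the $V_h^1$-part of $e_h$ is $e_h^1=u_h^{1+}-\interpol_h u$.

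Testing the error equation with $v_h=e_h$ and using \eqref{Eq:a-elliptic},
\begin{equation*}
 c_0\|e_h\|_{a_h}^2\le a_h(e_h,e_h)=a_h(\eta,e_h)-s_h(u_h^-,e_h^1),
\end{equation*}
where I used that $s_h(u_h^-,e_h)=s_h(u_h^-,e_h^1)$ because $s_h$ only sees the $V_h^1$-components. The interpolation term is routine: as $\jump{\eta}=\boldsymbol 0$, the two terms of $a_h(\eta,e_h)$ containing $\jump{\eta}$ drop out, leaving only the volume contributions and the single consistency term $\langle\mean{\epsilon\nabla\eta},\jump{e_h}\rangle$; the trace inequality, \eqref{Local:trace-inverse}, and the standard interpolation bounds $\|\eta\|_{0,\Omega}\le Ch^2|u|_{2,\Omega}$, $\|\nabla\eta\|_{0,\Omega}\le Ch|u|_{2,\Omega}$ then give $|a_h(\eta,e_h)|\le C\,h(\sqrt{\mu}\,h+\sqrt{\epsilon})\,|u|_{2,\Omega}\,\|e_h\|_{a_h}$.

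The heart of the proof is the sign of $s_h(u_h^-,e_h^1)$. The idea is to apply Lemma~\ref{LEM:generalize_Barranachea} with base state $w_h^0=u_h^0$ and arguments $r_h^1=u_h^1$, $v_h^1=\interpol_h u$, for which $[P_h^{u_h^0}u_h^1]^1=u_h^{1+}$ and $[Q_h^{u_h^0}u_h^1]^1=u_h^-$. Provided the nodal values of $\interpol_h u$ respect the shifted bounds $a-\underline u_{hi}\le\interpol_h u(\bx_i)\le b-\overline u_{hi}$, the truncation leaves $\interpol_h u$ unchanged and the lemma delivers the favourable sign $s_h(u_h^-,e_h^1)\ge 0$, so the term may simply be dropped, leaving $c_0\|e_h\|_{a_h}^2\le a_h(\eta,e_h)$ and hence $\|e_h\|_{a_h}\le C\,h(\sqrt{\mu}\,h+\sqrt{\epsilon})\,|u|_{2,\Omega}$. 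I expect this to be the main obstacle: the shifted bounds may be violated at nodes where $\interpol_h u(\bx_i)$ lies within $\max(|\underline u_{hi}|,|\overline u_{hi}|)\le\|u_h^0\|_{0,\infty,\Omega}$ of $\partial\mathcal G$. When the range of $u$ is strictly interior to $\mathcal G$, the $L^\infty$-bound on $u_h^0$ from Section~\ref{SEC:positivity} (of order $h^{\beta-2-d/2}$) guarantees that no such violation occurs once $h$ is small enough, and the sign is exact; in the borderline case one must estimate the correction $s_h(u_h^-,(\interpol_h u)^-)$ by Cauchy--Schwarz, controlling $\|u_h^-\|_{s_h}$ by testing the error equation with $u_h^-$ itself (its jumps vanish, so $a_h(u-u_h^+,u_h^-)$ reduces to volume terms plus $\langle\mean{\epsilon\nabla u_h^-},\jump{u_h^0}\rangle$) and using the norm relation $\|u_h^-\|_{a_h}\le C_\textup{equiv}\alpha^{-1}\|u_h^-\|_{s_h}$ to obtain $\|u_h^-\|_{s_h}\le C\alpha^{-1}(\|u-u_h^+\|_{a_h}+\|\jump{u_h^0}\|_\facets)$, which is then absorbed for $\alpha,\gamma$ large.

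Finally I would assemble the three contributions of the target norm. Using $u-u_h^+=\eta-e_h$ and $\jump{\eta}=\boldsymbol 0$,
\begin{equation*}
 \mu\|u-u_h^+\|_{0,\mesh}^2+\epsilon\|\nabla(u-u_h^+)\|_{0,\mesh}^2\le 2\|\eta\|_{a_h}^2+2\|e_h\|_{a_h}^2\le C\,h^2(\sqrt{\mu}\,h+\sqrt{\epsilon})^2\,|u|_{2,\Omega}^2,
\end{equation*}
while the jump term $\|\jump{u_h^0}\|_\facets^2$ is bounded directly by Corollary~\ref{Cor1}, producing the factor $\tilde C(f,a,b,\Omega)$ and being of the required order for $\beta\ge 4$. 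Adding the three terms and taking square roots yields the claimed estimate.
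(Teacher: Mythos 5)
Your strategy is viable in outline, but it is not the paper's proof, and at its central step it has a genuine gap \emph{for the theorem as stated}. The paper never estimates the stabilisation term at all: it first shows (Lemma~\ref{LEM:var_ineq}) that $u_h^+$ solves a variational inequality over the convex set $V_h^+$ of truncated functions, then tests with the \emph{truncated} interpolant $\tilde v_h = P_h^{u_h^0}(\interpol_h u)\in V_h^+$, and finally controls the extra error $\interpol_h u - [P_h^{u_h^0}(\interpol_h u)]^1$ by $\|u_h^0\|_{0,\Omega}$ via Lemma~\ref{LEM:plus_equiv} and Corollary~\ref{Cor1}. In this route the nonlinear term is eliminated with no condition on $h$, no strict interiority of the range of $u$, and no absorption requiring $\alpha$ large. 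You instead compare with the raw interpolant and invoke Lemma~\ref{LEM:generalize_Barranachea} with $v_h^1=\interpol_h u$, which delivers the favourable sign only if $\interpol_h u$ respects the \emph{shifted} bounds $a-\underline u_{hi}\le u(\bx_i)\le b-\overline u_{hi}$. Theorem~\ref{TH:convergence} does not assume the range of $u$ to be strictly interior to $\mathcal G$, so what you call the ``borderline case'' is in fact the general case, and your fallback argument is where the proof must live.

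That fallback, as written, does not close. Splitting $\interpol_h u=(\interpol_h u)^+ + (\interpol_h u)^-$ with $(\interpol_h u)^\pm$ the $P$/$Q$ parts relative to $u_h^0$, the lemma applied with $v_h^1=\interpol_h u$ gives
\begin{equation*}
 s_h(u_h^-,e_h^1)\;=\;\underbrace{s_h\bigl(u_h^--(\interpol_h u)^-,\,u_h^{1+}-(\interpol_h u)^+\bigr)}_{\ge 0}
 \;+\;s_h\bigl((\interpol_h u)^-,\,u_h^{1+}-(\interpol_h u)^+\bigr)\;-\;s_h\bigl(u_h^-,(\interpol_h u)^-\bigr),
\end{equation*}
so besides your correction $s_h(u_h^-,(\interpol_h u)^-)$ there is a cross term, and a nodal case check shows it is $\le 0$, i.e.\ it enters the upper bound for $-s_h(u_h^-,e_h^1)$ with the \emph{unfavourable} sign; its natural estimate involves $\|u_h^{1+}-(\interpol_h u)^+\|_{s_h}$, which is not controlled by $a_h$-quantities (the relation $\| v^1_h \|_{a_h} \le \tfrac{C_\textup{equiv}}{\alpha} \| v^1_h \|_{s_h}$ goes the wrong way, and crude nodal bounds leave terms of order $\alpha\,\epsilon\, h^{\beta-4}$, which for $\beta=4$ does not vanish). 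The repair is to invoke the lemma with $v_h^1=[P_h^{u_h^0}(\interpol_h u)]^1$, which is already truncated and hence has vanishing $Q$-part; this yields exactly $s_h(u_h^-,u_h^{1+}-[P_h^{u_h^0}\interpol_h u]^1)\ge 0$ and leaves only your single correction term. You must then also supply the missing bound $\|(\interpol_h u)^-\|_{s_h}\le C\sqrt{\alpha}\,(\sqrt{\epsilon}h^{-1}+\sqrt{\mu})\|u_h^0\|_{0,\Omega}$, which rests on the nodal estimate $|(\interpol_h u)^-(\bx_i)|\le\|u_h^0\|_{0,\infty,\omega_i}$ (valid only because $u(\bx_i)\in[a,b]$, the same hidden hypothesis as in Lemma~\ref{LEM:plus_equiv}) together with Corollary~\ref{Cor1}; with it, the correction is of order $h(\sqrt{\mu}h+\sqrt{\epsilon})h^{\beta-4}\sqrt{\tilde C}$ times an absorbable factor and your argument goes through. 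Note that after both repairs you have essentially reconstructed the paper's proof, since truncating the interpolant is precisely the paper's test-function choice. A final minor point: \eqref{Local:trace-inverse} is stated for $v_h\in V_h$ and cannot be applied to $\eta=u-\interpol_h u$; the facet term $\langle\mean{\epsilon\nabla\eta},\jump{e_h}\rangle$ needs the continuous-level trace inequality producing the $h^{1/2}|u|_{2,T}$ contribution.
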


The first step in the proof of the error is the following result that states that the constrained part of the discrete solution $u_h^{+}$ satisfies a variational inequality.

\begin{lemma}\label{LEM:var_ineq}
 Let $u_h \in V_h$ solve \eqref{EQ:stab_prob} and define the closed convex set
 \begin{equation}\label{Vh+-specific}
  V^+_h = \{ v_h \in V_h\colon v_h = P^{u^0_h}_h w_h \text{ for some } w_h \in V_h \}.
 \end{equation}
 Then, the function $u^+_h$ satisfies
 \begin{equation*}
  a_h(u_h^+, v_h - u^+_h) \ge b(v_h - u_h^+) \qquad \forall v_h \in V^+_h.
 \end{equation*}
\end{lemma}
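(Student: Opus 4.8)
The plan is to derive the inequality directly from the discrete equation \eqref{EQ:stab_prob}, exploiting the sign structure carried by the stabilization term $s_h$. First I would observe that if $v_h\in V_h^+$ then $v_h-u_h^+\in V_h$, since both $v_h$ and $u_h^+=P_h^{u_h^0}u_h$ lie in $V_h^+\subset V_h$ and $V_h$ is a vector space. Hence $v_h-u_h^+$ is an admissible test function in \eqref{EQ:stab_prob}; testing with it and rearranging gives the identity
\[
 a_h(u_h^+,v_h-u_h^+)-b_h(v_h-u_h^+)=-\,s_h(u_h^-,v_h-u_h^+)\,.
\]
The asserted variational inequality is therefore equivalent to the single scalar statement $s_h(u_h^-,v_h-u_h^+)\le 0$ for every $v_h\in V_h^+$, which reduces the whole lemma to a property of $s_h$ alone.

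The second step is to evaluate this stabilization term nodewise. Because $s_h$ depends only on the piecewise-linear parts of its arguments, and because every $v_h\in V_h^+$ has constant part equal to $u_h^0$ (the same as the constant part of $u_h^+$), the constant parts cancel in the decomposition \eqref{EQ:decomp} of $v_h-u_h^+$. Writing $P_i$ for the scalar clamp $t\mapsto\max[a-\underline u_{hi},\,\min(t,\,b-\overline u_{hi})]$ onto the interval $[a-\underline u_{hi},\,b-\overline u_{hi}]$, and using $v_h=P_h^{u_h^0}w_h$ for the appropriate $w_h\in V_h$, I obtain at each node $u_h^-(\bx_i)=t_i-P_i(t_i)$ and $(v_h-u_h^+)^1(\bx_i)=P_i(s_i)-P_i(t_i)$, with $t_i=u_h^1(\bx_i)$ and $s_i=w_h^1(\bx_i)$. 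This recasts the stabilization term as
\[
 s_h(u_h^-,v_h-u_h^+)=\alpha\sum_{i=1}^N(\epsilon h_i^{d-2}+\mu h_i^d)\,[t_i-P_i(t_i)]\,[P_i(s_i)-P_i(t_i)]\,.
\]

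The crux, and the only genuinely nontrivial ingredient, is the nodewise sign inequality $[t-P_i(t)]\,[P_i(s)-P_i(t)]\le 0$ for all $s,t\in\IR$. This is precisely the variational characterization of the metric projection of $t$ onto the interval $[a-\underline u_{hi},\,b-\overline u_{hi}]$: the residual $t-P_i(t)$ is an outward normal at $P_i(t)$, while $P_i(s)-P_i(t)$ points back into the interval. I would verify it by the three elementary cases according to whether $t$ lies below, inside, or above the interval (and note that when the interval is empty, i.e.\ $a-\underline u_{hi}>b-\overline u_{hi}$, the clamp $P_i$ is constant and the bracket vanishes), so that no positivity-preservation hypothesis is actually needed here. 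Since $\epsilon h_i^{d-2}+\mu h_i^d>0$ and $\alpha>0$, every summand is nonpositive, whence $s_h(u_h^-,v_h-u_h^+)\le 0$ and the variational inequality follows. The only real care required is bookkeeping: tracking the splitting \eqref{EQ:decomp} so that the constant parts cancel cleanly and $s_h$ is expressed solely through the clamp values $P_i$; I expect this cancellation to be the easiest place to slip, whereas the scalar projection inequality itself is the essential mechanism.
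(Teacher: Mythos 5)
Your proposal is correct and follows essentially the same route as the paper: test \eqref{EQ:stab_prob} with $v_h-u_h^+$ to reduce the claim to $s_h(u_h^-,v_h-u_h^+)\le 0$, then establish that sign by the projection-onto-interval property at each node. The only cosmetic difference is that you verify the scalar clamp inequality $[t_i-P_i(t_i)][P_i(s_i)-P_i(t_i)]\le 0$ directly, whereas the paper invokes Lemma~\ref{LEM:generalize_Barranachea} together with the idempotence $P^{u_h^0}_h v_h=v_h$, $Q^{u_h^0}_h v_h=0$ on $V_h^+$ --- which is exactly the same nodewise statement in this special case.
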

\begin{proof}
As in the proof of  \cite[Thm.\ 3.5]{BarrenecheaGPV23}, we can derive that
 \begin{equation*}
  a_h(u_h^+, v_h - u^+_h) + s(u^{1-}_h , v^{1}_h - u^{1+}_h) = b(v_h - u_h^+) \qquad \forall v_h \in V_h^+.
 \end{equation*}
 Using that $P^{u^0_h}_h v_h = v_h$ and $Q^{u^0_h}_h v_h = 0$ for $v_h \in V_h^+$ we exploit Lemma \ref{LEM:generalize_Barranachea} to deduce that
 \begin{equation*}
  s(u^{1-}_h, v^{1}_h - u^{1+}_h) = s([Q^{u^0_h}_h u_h]^1 - [Q^{u^0_h}_h v_h]^1, [P^{u^0_h}_h v_h]^1 - [P^{u^0_h}_h u_h]^1) \le 0 \qquad \forall v_h \in V^+_h,
 \end{equation*}
 which implies the result.
\end{proof}

To exploit the variational inequality proven above, we must choose an appropriate test function. To achieve this, we introduce the Lagrange interpolation operator $\interpol_h:C^0(\bar{\Omega})\to V_h^1$ defined in \cite[Ch.r~11]{EG21-I}, and define the test function
\begin{equation}\label{good-test}
\tilde{v}_h^{}:= P^{u^0_h}_h(\interpol_h^{} u).
\end{equation}
Then, the following result holds.
\begin{lemma}
The following error estimate holds
\begin{equation*}
\mu\|u-u_h^+\|_{0,\mathcal{T}_h}^2+\epsilon \|\nabla(u-u_h^+)\|_{0,\mathcal{T}_h}^2
\le C\Big( \epsilon\,\|\nabla(u-\tilde{v}_h^{})\|_{0,\mathcal{T}_h}^2+ \mu\|u-\tilde{v}_h^{}\|_{0,\mathcal{T}_h}^2+\| \jump{ u_h^{0} } \|_{\facets}^2\Big)\,,
\end{equation*}
where $C>0$ depends only on the mesh regularity.
\end{lemma}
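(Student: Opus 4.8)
The plan is to run a Céa-type argument on the variational inequality of Lemma~\ref{LEM:var_ineq}, tested against the function $\tilde v_h^{}$ from \eqref{good-test}. The structural observation that makes everything work is that the discrete error $\chi := u_h^+ - \tilde v_h^{}$ is \emph{continuous}. Indeed, by the definition \eqref{EQ:def_p} of $P^{u^0_h}_h$, both $u_h^+ = P^{u^0_h}_h(u_h)$ and $\tilde v_h^{} = P^{u^0_h}_h(\interpol_h u)$ carry the same piecewise-constant part $u^0_h$, so their difference lies in $V_h^1$. Consequently $\jump{\chi}=0$ on every facet, all skeleton contributions to $a_h$ collapse, and one gets the identity $a_h(\chi,\chi) = \|\chi\|_{\mathrm{en}}^2$, where I abbreviate $\|\chi\|_{\mathrm{en}}^2 := \mu\|\chi\|_{0,\mesh}^2 + \epsilon\|\nabla\chi\|_{0,\mesh}^2$.

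First I would convert the inequality into a sign condition. Testing Lemma~\ref{LEM:var_ineq} with $\tilde v_h^{}\in V_h^+$ gives $a_h(u_h^+,\tilde v_h^{}-u_h^+)\ge b(\tilde v_h^{}-u_h^+)$. Since $\tilde v_h^{}-u_h^+ = -\chi\in V_h^1\subset H_0^1(\Omega)$ is continuous, vanishes on $\partial\Omega$ and has no jumps, the consistency of the EG form for the smooth solution $u$ of \eqref{EQ:cont_prob} yields $b(-\chi)=a_h(u,-\chi)$, and rearranging produces
\[
 a_h(u_h^+ - u,\chi)\le 0.
\]
Combining this with $a_h(\chi,\chi)=\|\chi\|_{\mathrm{en}}^2$ and the splitting $u_h^+-\tilde v_h^{} = (u_h^+-u)+(u-\tilde v_h^{})$, I obtain
\[
 \|\chi\|_{\mathrm{en}}^2 = a_h(u_h^+-u,\chi) + a_h(u-\tilde v_h^{},\chi) \le a_h(u-\tilde v_h^{},\chi).
\]

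Next I would bound the right-hand side. Writing $e:=u-\tilde v_h^{}$ and using $\jump{\chi}=0$, the two terms of $a_h(e,\chi)$ carrying the factor $\jump{\chi}$ (the symmetric consistency term and the penalty term) vanish, leaving
\[
 a_h(e,\chi) = (\epsilon\nabla e,\nabla\chi)_\mesh + (\mu e,\chi)_\mesh - \langle\mean{\epsilon\nabla\chi},\jump{e}\rangle_\facets.
\]
The first two terms are controlled by Cauchy--Schwarz by $\big(\epsilon^{1/2}\|\nabla e\|_{0,\mesh}+\mu^{1/2}\|e\|_{0,\mesh}\big)\|\chi\|_{\mathrm{en}}$. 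For the interface term, the point is that $\jump{e}=-\jump{\tilde v_h^{}}=-\jump{u^0_h}$ on interior facets (the continuous part $(\tilde v_h^{})^1$ having no jump), so I must estimate $\langle\mean{\epsilon\nabla\chi},\jump{u^0_h}\rangle_\facets$. Distributing the weights as $\epsilon=\epsilon^{1/2}\cdot\epsilon^{1/2}$ and $1=h_F^{1/2}\cdot h_F^{-1/2}$, a facet-wise Cauchy--Schwarz bounds this by the product of $\big(\sum_F h_F\epsilon^{-1}\|\mean{\epsilon\nabla\chi}\|_{0,F}^2\big)^{1/2}$ and $\big(\sum_F \epsilon h_F^{-1}\|\jump{u^0_h}\|_{0,F}^2\big)^{1/2}$; since $\chi$ is discrete, the trace-inverse inequality \eqref{Local:trace-inverse} estimates the first factor by $C\,\epsilon^{1/2}\|\nabla\chi\|_{0,\mesh}\le C\|\chi\|_{\mathrm{en}}$, while the second is dominated by $\|\jump{u^0_h}\|_\facets$ by the definition of that norm in \eqref{Def:Disc-Norms}. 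Thus $|\langle\mean{\epsilon\nabla\chi},\jump{u^0_h}\rangle_\facets|\le C\|\chi\|_{\mathrm{en}}\|\jump{u^0_h}\|_\facets$. Dividing the resulting inequality by $\|\chi\|_{\mathrm{en}}$ and applying the triangle inequality $\|u-u_h^+\|_{\mathrm{en}}\le\|e\|_{\mathrm{en}}+\|\chi\|_{\mathrm{en}}$ together with $\|e\|_{\mathrm{en}}^2 = \epsilon\|\nabla e\|_{0,\mesh}^2 + \mu\|e\|_{0,\mesh}^2$ then yields the claim.

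I expect the main obstacle to be exactly the interface term $\langle\mean{\epsilon\nabla\chi},\jump{u^0_h}\rangle_\facets$: one has to distribute the $\epsilon$- and $h_F$-weights precisely so that the trace-inverse estimate lands on $\|\chi\|_{\mathrm{en}}\|\jump{u^0_h}\|_\facets$ with no leftover power of $\epsilon$ or $h_F$, which is the very reason the $\|\cdot\|_\facets$-norm in \eqref{Def:Disc-Norms} carries the weight $(\epsilon+\mu h_\face^2)/h_\face$. A secondary technical point is the treatment of the boundary facets, where $\jump{e}$ additionally sees the Dirichlet data; this must be reconciled with the boundary terms hidden in $b(\cdot)$ so that the consistency identity $b(-\chi)=a_h(u,-\chi)$ holds (or contributes only higher-order terms). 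Once the continuity of $\chi$ is exploited, the remainder is routine Cauchy--Schwarz and Young bookkeeping.
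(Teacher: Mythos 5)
Your proposal is correct and follows essentially the same route as the paper's own proof: testing the variational inequality of Lemma~\ref{LEM:var_ineq} with $\tilde v_h^{}$, invoking consistency of $a_h$, exploiting that $u_h^+-\tilde v_h^{}\in V_h^1$ is continuous so its jumps vanish and $\jump{u-\tilde v_h^{}}=\jump{u_h^0}$, then bounding the remaining skeleton term via Cauchy--Schwarz and the trace-inverse inequality \eqref{Local:trace-inverse} before concluding with the triangle inequality. Your explicit distribution of the $\epsilon$- and $h_F$-weights, and your remark on the boundary facets, merely spell out details the paper's proof leaves implicit.
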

\begin{proof}
    Thanks to the regularity assumption on $u$, the EG method used in this work is consistent, and thus
 \begin{equation*}
  a_h(u, v_h - u^+_h) = b(v_h - u^+_h)
\,,
\end{equation*}
for every $v_h^{}\in V_h$. So, for every $v_h^{}\in V_h^+$, where we remind that $V_h^+$ is given by \eqref{Vh+-specific}, the following inequality holds
\begin{equation*}
  a_h(u_h^+ - u, v_h - u^+_h) \ge 0\,.
 \end{equation*}
 Next, considering $\tilde{v}_h^{}$ defined in \eqref{good-test}, using the ellipticity of the bilinear form $a_h^{}(\cdot,\cdot)$, the Cauchy-Schwarz inequality, the local trace-inverse result \eqref{Local:trace-inverse}, the fact that $\tilde{v}_h^{}-u_h^+$ is continuous, and that $\jump{\tilde{v}_h^{}-u}= {\jump{ P^{u^0_h}_h(\interpol_h^{} u) } =}\jump{u_h^0}$
 we get
 \begin{align*}
     \|\tilde{v}_h^{}-u_h^+&\|_{a_h}^{2} \\
     &= a_h^{}(\tilde{v}_h^{}-u_h^+,\tilde{v}_h^{}-u_h^+)\\
     &= a_h^{}(\tilde{v}_h^{}-u, \tilde{v}_h^{}-u_h^+)+ \underbrace{a_h^{}(u-u_h^+,\tilde{v}_h^{}-u_h^+)}_{\le 0}\\
     &\le a_h^{}(\tilde{v}_h^{}-u, \tilde{v}_h^{}-u_h^+)\\
     &= \epsilon (\nabla (\tilde{v}_h^{}-u), \nabla (\tilde{v}_h^{}-u_h^+))_{\mathcal{T}_h}^{} + \mu(  \tilde{v}_h^{}-u, \tilde{v}_h^{}-u_h^+)_{\mathcal{T}_h}^{} -\langle \epsilon \mean{\nabla(\tilde{v}_h^{}-u_h^+)}, \jump{\tilde{v}_h^{}-u}\rangle_\facets^{}\\
     &\le C\Big( \epsilon \|\nabla (u-\tilde{v}_h^{})\|_{0,\mathcal{T}_h}^2+\mu\|u-\tilde{v}_h^{}\|_{0,\mathcal{T}_h}^2+ \|\jump{u_h^0}\|_{\facets}^2\Big)^{\frac{1}{2}}\|\tilde{v}_h^{}-u_h^+\|_{a_h}^{}\,,
 \end{align*}
 which, after using the triangle inequality, proves the result.
\end{proof}

To prove the error, it only remains to bound the difference {$u -\tilde{v}_h^{}=u-P_h^{u_h^0}(\interpol_h u)$}. We remark that $u_h^0$ has already been bounded in Corollary~\ref{Cor1}, so it only remains to bound the difference $u-{[P_h^{u_h^0}(\interpol_h u)]^1}$. The following result states that bound.

\begin{lemma}\label{LEM:plus_equiv}
Let $w_h { = w_h^1 + w_h^0} \in V_h$ be arbitrary. Then,  there exists a constant $C>0$ depending only on the mesh regularity such that
 \begin{align*}
  {\| w^1_h - [P_h^{w_h^0}(w_h)]^{1} \|_{0,\Omega} =}  \| w^1_h - w^{1+}_h \|_{0,\Omega} &\le C\, \| w_h^0 \|_{0,\Omega},  \\
    {\| \nabla w^1_h - \nabla [P_h^{w_h^0}(w_h)]^{1} \|_{0,\Omega} =} \| \nabla w^1_h - \nabla w^{1+}_h \|_\mesh &\le C\, h^{-1} \| w_h^0 \|_{0,\Omega}.
 \end{align*}
\end{lemma}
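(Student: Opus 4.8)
The plan is to reduce everything to the nodal values of the two $V_h^1$ functions involved and then bound the truncation error node by node. Both $w_h^1$ and $[P_h^{w_h^0}(w_h)]^1 = w_h^{1+}$ live in $V_h^1$, and their difference has nodal values $w_h^1(\bx_i) - P_i^{w_h}(w_h)$. As in the proof of Lemma~\ref{LEM:stab1}, I would invoke the norm equivalence $\|v_h\|_{0,\Omega}^2 \le C\sum_{i} h_i^d\,v_h(\bx_i)^2$ valid on $V_h^1$ (cf.\ \cite[Proposition~12.5]{EG21-I}) to obtain
\[
\|w_h^1 - w_h^{1+}\|_{0,\Omega}^2 \le C\sum_{i=1}^N h_i^d\,\bigl(w_h^1(\bx_i) - P_i^{w_h}(w_h)\bigr)^2 .
\]
It then suffices to control each factor $|w_h^1(\bx_i) - P_i^{w_h}(w_h)|$ by the local size of $w_h^0$.

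The heart of the argument --- and the step I expect to be the main obstacle --- is the pointwise truncation estimate
\[
|w_h^1(\bx_i) - P_i^{w_h}(w_h)| \le \|w_h^0\|_{0,\infty,\omega_i}.
\]
By definition $P_i^{w_h}(w_h)$ is the clipping of $w_h^1(\bx_i)$ onto the interval $[a - \underline w_{hi},\,b - \overline w_{hi}]$. Since $\underline w_{hi}$ and $\overline w_{hi}$ are the min and max of $w_h^0$ over the patch $\omega_i$, the endpoints of this interval differ from $a$ and $b$ by at most $\|w_h^0\|_{0,\infty,\omega_i}$. The clipping therefore moves $w_h^1(\bx_i)$ by at most $\|w_h^0\|_{0,\infty,\omega_i}$, provided the original value $w_h^1(\bx_i)$ already lies in $\mathcal G = [a,b]$: in that case the distance from $w_h^1(\bx_i)$ to $[a - \underline w_{hi}, b - \overline w_{hi}]$ is bounded by $\max(|\underline w_{hi}|,|\overline w_{hi}|)\le \|w_h^0\|_{0,\infty,\omega_i}$. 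This is precisely where the invariant-domain structure enters, and it is met by the function the lemma is applied to, namely $w_h^1 = \interpol_h u$ with $u \in \mathcal G$, so that $w_h^1(\bx_i)=u(\bx_i)\in[a,b]$; without a bound on the nodal values of $w_h^1$ the estimate fails (take $w_h^0\equiv 0$ and $w_h^1$ large), so making this hypothesis explicit is the delicate point of the statement.

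With the pointwise bound in hand, the first estimate follows from an inverse inequality for piecewise constants. Since $w_h^0|_\elem$ is constant and the mesh is quasi-uniform, $\|w_h^0\|_{0,\infty,\omega_i} \le C\,h_i^{-d/2}\|w_h^0\|_{0,\omega_i}$; substituting into the nodal sum makes the weights $h_i^d$ cancel, and since each element belongs to only a bounded number of patches $\omega_i$, summation yields $\sum_i \|w_h^0\|_{0,\omega_i}^2 \le C\|w_h^0\|_{0,\Omega}^2$. This gives $\|w_h^1 - w_h^{1+}\|_{0,\Omega}\le C\|w_h^0\|_{0,\Omega}$.

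Finally, for the gradient estimate I would apply the local inverse inequality of Lemma~\ref{LEM:stab1} (equivalently \cite[Lemma~12.1]{EG21-I}) to the $V_h^1$ function $w_h^1 - w_h^{1+}$ on each element, giving $\|\nabla(w_h^1 - w_h^{1+})\|_{0,\elem}\le C h_\elem^{-1}\|w_h^1 - w_h^{1+}\|_{0,\elem}$. Squaring, summing over $\elem\in\mesh$, and invoking quasi-uniformity ($h_\elem \simeq h$) reduces the right-hand side to $C h^{-2}\|w_h^1 - w_h^{1+}\|_{0,\Omega}^2$, and the already-proven first estimate then yields $\|\nabla(w_h^1 - w_h^{1+})\|_\mesh \le C h^{-1}\|w_h^0\|_{0,\Omega}$, completing the proof.
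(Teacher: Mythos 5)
Your proof follows essentially the same route as the paper's: the nodal-value norm equivalence \eqref{Equiv:nodal-values} on $V_h^1$, a pointwise bound on the clipping error $|w_h^1(\bx_i)-w_h^{1+}(\bx_i)|$ by $\|w_h^0\|_{0,\infty,\omega_i}$, the inverse estimate for piecewise constants to trade the local sup-norm for the local $L^2$-norm (making the $h_i^d$ weights cancel), finite patch overlap for the summation, and a standard inverse inequality for the gradient bound. The one place you deviate is in fact a correction rather than a gap on your side: the paper's one-line chain of inequalities asserts $(w_h^1(\bx_i)-w_h^{1+}(\bx_i))^2 \le \|w_h^0\|^2_{0,\infty,\omega_T}$ for \emph{arbitrary} $w_h\in V_h$, and as you observe this is false without a hypothesis on the nodal values of $w_h^1$ (your counterexample $w_h^0\equiv 0$, $w_h^1$ large is exactly right: then $w_h^{1+}$ is the plain clipping of $w_h^1$ onto $[a,b]$ and the left-hand side is unbounded while the right-hand side vanishes). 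The estimate does hold, with a harmless constant, once $w_h^1(\bx_i)\in\mathcal G$, and this is satisfied where the lemma is actually invoked in the proof of Theorem \ref{TH:convergence}, namely for $w_h^1=\interpol_h u$ with $u$ taking values in $\mathcal G$. So your proposal is correct, matches the paper's argument, and additionally makes explicit a hypothesis that the paper's statement of the lemma omits and its proof uses silently; the only minor point you leave untreated is the degenerate case $a-\underline w_{hi} > b-\overline w_{hi}$, where the truncation interval is empty, but there the oscillation of $w_h^0$ on $\omega_i$ exceeds $b-a$ and the same bound survives up to a fixed constant factor.
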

\begin{proof}
 Let $T\in\mesh$. Then, using \eqref{Equiv:nodal-values} and the regularity of the mesh family, we get
 \begin{equation*}
 \| w^1_h - w^{1+}_h \|^2_{0,T} \le C\, h^{d} \max_{\bx_i \in \elem} (w^1_h(\bx_i) - w^{1+}_h(\bx_i))^2 \le C h^{d} \max_{\hat \elem \in \omega_\elem} \| w_h^0 \|^2_{0,\infty,\omega_T} \le C\, \| w_h^0 \|^2_{0,\omega_\elem}.
 \end{equation*}
 Adding over the elements in the mesh yields the first inequality. The second one follows using the inverse inequality.
\end{proof}

Gathering all these preliminary results, we can finally prove Theorem \ref{TH:convergence}.

\begin{proof}[Proof of Theorem \ref{TH:convergence}]
Classical approximation properties of $\interpol_h$ (see, e.g., \cite{EG21-I}) yield
 \begin{equation*}
     \sqrt{\mu}\| u - \interpol_h u \|_{0,\Omega}^{}+\sqrt{\epsilon}\|\nabla (u-\interpol_h u)\|_{0,\Omega}^{}\le C\, (\sqrt{\mu} h+\sqrt{\epsilon}) h |u|_{2,\Omega}\,.
 \end{equation*}
In addition, using Lemma~\ref{LEM:plus_equiv}, the approximation properties of $\interpol_h$ and Corollary~\ref{Cor1} we arrive at
\begin{align*}
   \| u - \tilde{v}_h^{}\|_{0,\Omega}^{}&\le \|u-\interpol_h u\|_{0,\Omega}^{}+ \|\interpol_h u-{[P^{u^0_h}_h(\interpol_h^{} u)]^1}\|_{0,\Omega}^{}+\|u_h^0\|_{0,\Omega}^{}\\
   &\le Ch^2 |u|_{2,\Omega}^{}+ C\|u_h^0\|_{0,\Omega}^{}\\
   &\le Ch^2 |u|_{2,\Omega}^{}+ C h^{\beta-2}\sqrt{\tilde{C}(f,a,b,\Omega)}\,,\\
   \| \nabla(u - \tilde{v}_h^{})\|_{0,\mathcal{T}_h}^{}&\le \| \nabla(u - \interpol_h^{}u)\|_{0,\mathcal{T}_h}^{}+\| \nabla( \interpol_h^{}u-{[P^{u^0_h}_h(\interpol_h^{} u)]^1})\|_{0,\mathcal{T}_h}^{}\\
   &\le Ch |u|_{2,\Omega}^{}+ Ch^{-1}\|u_h^0\|_{0,\Omega}^{}\\
   &\le Ch |u|_{2,\Omega}^{}+ C h^{\beta-3}\sqrt{\tilde{C}(f,a,b,\Omega)}\,.
\end{align*}
 Gathering the last inequalities and using that $\beta\ge 4$ proves the error estimate.
\end{proof}

\section{Numerical examples}\label{SEC:numerics}
This section presents various numerical examples to validate our theoretical findings and discuss implementation aspects. All computations were done using the finite element library Netgen/NGSovle, see \cite{Schoberl14}, \url{www.ngsolve.org}.

\subsection{An iterative algorithm}\label{SEC:algo}

To solve the nonlinear problem \eqref{EQ:stab_prob}, we follow a similar strategy as was presented in \cite{BarrenecheaGPV23}, i.e., we consider using a Richardson-like iterative approximation. However, since an iteration of the (linearized) fully coupled problem defined on $V_h$ might lead to very ill-conditioned system matrices, see Section \ref{SEC:condtioning} below, we define a nested iterative scheme that is motivated by the fixed point iteration \eqref{EQ:fixed_point} introduced in Section \ref{sec:postpreserv}. Each step in the iterative algorithm includes solving equations \eqref{EQ:T_1} and \eqref{EQ:T_2}. While \eqref{EQ:T_2} is linear and can be solved with classical tools (e.g., a direct solver), we introduce another iterative (sub) scheme to solve the non-linear problem \eqref{EQ:T_1}. 

To track the nested iterative algorithm, we use the following notation. The outer loop, resembling the iteration of the fixed point iteration \eqref{EQ:fixed_point}, is denoted by the index $m$. The inner loop, resembling the iteration of the linearized problem \eqref{EQ:T_1}, is denoted by the index $n$. Correspondingly, we denote use $u_{h}^{1}\big|^{n}_m$ to indicate the linear part of the solution of the linearized problem at the $n$-th inner iteration of the $m$-th outer iteration. Since $u_h\big|_m$ is fixed in the inner loop, we omit the index $m$ for the constants in the following. Now consider a tolerance $tol_m$ for the outer loop and a tolerance $tol_n$ for the inner loop. The algorithm is then defined as follows:

\begin{outerbox}
Given a starting guess $u_h^0\big|_0 \in V_h^0$
we solve for all $m = 0,1,...$ the problem for $u_h^0\big|_{m+1}$:\\

\textbf{Step 1}: Solve the following iteration to find $u_h^1\big|_{m+1}$:

\begin{innerbox}
    Given the starting guess $u_h^1\big|_m^0 = u_h^1\big|_m$ and a  damping parameter $\omega \in (0,1]$, for each $n = 0,...$ we find $u_{h}^{1}\big|^{n+1}_m \in V_h^1$ such that for all $v_h^1 \in V_h^1$ there holds
    \begin{align*}
        a_h(u_{h}^{1}\big|^{n+1}_m, v_h^1) = &a_h(u_h^{1}\big|_m^n,  v_h^1) \\
        &+ \omega \Big( (f,  v_h^1) - a_h([P_h^{u_h^0\big|_m} u_h^1\big|_m^{n}]^1,  v_h^1) 
        - s_h([Q_h^{u_h^0\big|_m} u_h^1|^{n}_m]^1,  v_h^1) \Big), 
    \end{align*}
    until $\| u_h^1\big|^{n+1}_m - u_h^1\big|^{n}_m\|_0 \le tol_n$. Let $N$ denote the number of iterations needed to reach the tolerance $tol_n$. We then set $u_h^1\big|_{m+1} = u_h^1\big|^N_{m}$.
\end{innerbox}

\textbf{Step 2}:
 To find $u_h^0\big|_{m+1}$, solve the problem:
\begin{align*}
    a_h(u_h^0\big|_{m+1}, v_h^0) = (f, v_h^0) - a_h(u_h^1\big|_{m+1}, v_h^0),  \quad \forall v_h^0 \in V_h^0.
\end{align*}
Terminate the (outer) iteration when $\| u_h^0\big|_{m+1} - u_h^0\big|_{m}\|_0 \le tol_m$. 
\end{outerbox}

We chose $\omega = 1$ in our numerical examples and tested different values for the stopping criteria. Surprisingly, we found that the tolerance of the inner loop is not crucial for the algorithm's convergence. We observed that the algorithm converges for a wide range of tolerances; see Section \ref{sec:smooth_solution} for more details. As a starting guess for the outer loop, we used $u_h^0\big|_0 = [u_h^{init}]^0$, i.e., the constant part of $u_h^{init}$ given as the solution of the standard EG problem without modification, i.e. 
\begin{align*}
    a_h(u_h^{init}, v_h) = (f, v_h), \quad \forall v_h \in V_h.
\end{align*}
We similarly used $u_h^1\big|_0 = [u_h^{init}]^1$ for the inner loop.

\subsection{The condition number of the linearized problems}\label{SEC:condtioning}

Let  $\mathbb{A}$ denote the finite element matrix associated to the bilinear form $a_h(\cdot, \cdot)$, that is, the matrix that needs to be inverted when solving a linearized problem of \eqref{EQ:stab_prob}. In the discussion that follows, we can neglect the stabilization term $s_h(\cdot,\cdot)$, as it only appears on the right-hand side of the inner iterations in Step 1 of the scheme presented in the last section. Further let $a_h^i(\cdot, \cdot)$ with $i \in \{0,1\}$ denote the restriction of the bilinear form $a_h$ on $V_h^i$, and let $\mathbb{A}^i$ denote the corresponding matrix.  In the following, we discuss the condition numbers we expect for each system matrix, considering varying choices of $\beta$. By construction, the matrix $\mathbb{A}^1$ is the standard finite element matrix of linear Lagrange finite elements and is independent of $\beta$. The condition $\kappa(\mathbb{A}^1)$ is then known to scale like $\mathcal{O}(h^{-2})$ which follows with standard techniques, see e.g. \cite[Ch.~28]{EG21-II}.

In the forthcoming discussion, we will write $a\lesssim b$ to denote that $a\le C\, b$, where $C>0$ is a constant independent of $h$ (but that might depend on the physical parameters $\mu$ and $\epsilon$), and $a\sim b$ if $a\lesssim b$ and $b\lesssim a$.
To discuss the condition number of $\mathbb{A}^0$ we introduce the bilinear forms 
\begin{align*}
    m(w_h^0, v_h^0) = \int_\Omega w_h^0 v_h^0 \dx, 
    \quad \text{and} \quad 
    j(w_h^0, v_h^0) = \langle \jump{w^0_h},\jump{v^0_h} \rangle,
\end{align*}
for $w_h^0, v_h^0 \in V_h^0$, and denote by 
$\mathbb{M}$ and $\mathbb{J}$ the corresponding system matrices. Then, assuming for simplicity $\mu = \gamma = 1$, we have that $\mathbb{A}^0 = \mathbb{M} + h^{-\beta} \mathbb{J}$ (with the obvious abuse of notation regarding the term containing the negative power of $h$). In the first step, we only consider the jump matrix $\mathbb{J}$. Let $u_h^0 \in V_h^0$ be arbitrary, and let $\uu_0$ denote the corresponding finite element coefficient vector.  

Using the broken Poincar\'e inequality from Lemma~\ref{LEM:poincare} and the inverse trace inequality \eqref{Local:trace-inverse} we obtain
\begin{align*}
    \uu_0^T \mathbb{M} \uu_0 = 
    \| u_h^0\|^2_{0,\Omega}
   \lesssim  \underbrace{h^{-1} \langle \jump{u^0_h},\jump{u^0_h} \rangle}_{h^{-1} \uu_0^T \mathbb{J} \uu_0} 
   \lesssim h^{-2} \| u_h^0\|^2_{0,\Omega}
   = h^{-2}  \uu_0^T \mathbb{M} \uu_0,
\end{align*}
that is, after multiplying by $h$, 
\begin{equation}
h\uu_0^T \mathbb{M} \uu_0 \lesssim  \uu_0^T \mathbb{J} \uu_0\lesssim h^{-1} \uu_0^T \mathbb{M} \uu_0\,,
\end{equation}
and using that $\kappa(\mathbb{M})\sim \mathcal{O}(1)$ (see, e.g., \cite[Prop.~28.6]{EG21-II}) we conclude that $\kappa(\mathbb{J}) \sim \mathcal{O}(h^{-2})$ (and thus is independent of the value of $\beta$). This further implies 
\begin{align*}
   (1 + h^{-\beta + 1} ) \uu_0^T \mathbb{M} \uu_0 
   \lesssim
   \underbrace{
   \uu_0^T( \mathbb{M} + h^{-\beta}  \mathbb{J}) \uu_0
   }_{\sim \uu_0^T \mathbb{A}^0 \uu_0}
   \lesssim
   (1 + h^{-(\beta + 1)}) \uu_0^T \mathbb{M} \uu_0,
\end{align*}
from which we also conclude 
$\kappa(\mathbb{A}^0) \sim \mathcal{O}\Big( (1 + h^{-(\beta + 1)})/(1 + h^{-\beta + 1} ) \Big)$
which is bounded by $C h^{-2}$
\textbf{independently} of the choice of $\beta$.

\pgfplotstableset{
EOCStyle/.style={
column name = {$eoc$}, column type={c},precision = 2, zerofill,  postproc cell content/.append style={
/pgfplots/table/@cell content/.add={(}{)}}
},
NUMStyle/.style={
sci, precision = 2, sci zerofill
}
}

\pgfplotstableset{
ConditionStyle/.style={
columns/h/.style={column name = $h$, precision = 4,column type = {r} },
    columns/k_a/.style={column name = $\kappa(\mathbb{A})^{-1}$,  NUMStyle},
    columns/k_a1/.style={column name = $\kappa(\mathbb{A}^1)^{-1}$, NUMStyle},
    columns/k_a0/.style={column name = $\kappa(\mathbb{A}^0)^{-1}$, NUMStyle},
    create on use/eta_k_a/.style={create col/expr={ln(\prevrow{k_a}/\thisrow{k_a})/ln(2)}},
    columns/eta_k_a/.style={EOCStyle},
    create on use/eta_k_a1/.style={create col/expr={ln(\prevrow{k_a1}/\thisrow{k_a1})/ln(2)}},
    columns/eta_k_a1/.style={EOCStyle},
    create on use/eta_k_a0/.style={create col/expr={ln(\prevrow{k_a0}/\thisrow{k_a0})/ln(2)}},
    columns/eta_k_a0/.style={EOCStyle},
    columns={h,k_a, eta_k_a, k_a1, eta_k_a1, k_a0, eta_k_a0}, 
 every last row/.style={after row=\bottomrule},
 clear infinite,
 empty cells with={\ensuremath{--}}
}
}

\pgfplotstableset{
ErrorplotStyle/.style={
columns/EL/.style={column name = $|\mesh|$,int detect, precision = 0,column type = {r},set thousands separator={}},
    columns/l2up/.style={column name = $\| u - u_h^{+}\|_{0, \Omega}$,  NUMStyle},
    columns/h1up_1/.style={column name = $\| \nabla (u - u_h^{+})\|_{0, \Omega}$, NUMStyle},
    columns/it/.style={column name = $\#its$, precision = 0},
    create on use/eoc_l2up/.style={create col/expr={ln(\prevrow{l2up}/\thisrow{l2up})/ln(2)}},
    columns/eoc_l2up/.style={EOCStyle},
    create on use/eoc_h1up_1/.style={create col/expr={ln(\prevrow{h1up_1}/\thisrow{h1up_1})/ln(2)}},
    columns/eoc_h1up_1/.style={EOCStyle},
    columns/eoc_h1up_0/.style={EOCStyle},
    columns={EL, l2up, eoc_l2up, h1up_1, eoc_h1up_1, it}, 
 every last row/.style={},
 clear infinite,
 empty cells with={\ensuremath{--}}
}
}

\paragraph{Numerical investigation:}
To validate the findings presented above, we computed the condition numbers for a structured triangulation of the domain $\Omega = (0,1)^2$ with various mesh sizes. The results, summarized in Table \ref{tab::condition}, demonstrate that $\kappa(\mathbb{A}), \kappa(\mathbb{A}^0)$ and $\kappa(\mathbb{A}^1)$ scale at the anticipated rates, consistent with the theoretical predictions discussed earlier. 

\begin{table}[]
    \centering
    \pgfplotstabletypeset[
    ConditionStyle,
    every head row/.style={before row={ \multicolumn{7}{c}{$\beta = 1$} \\}, after row=\midrule},
]{
h	k_a	k_a1	k_a0
0.5	0.0015234553584498364	0.020317687019874978	0.12785520512223322
0.25	0.0005137793904233167	0.005604012360017594	0.04123629655440426
0.125	0.00015499898824980585	0.0015926783490645307	0.011973101818079427
0.0625	4.31385512756838e-05	0.0004361437155001302	0.0032517711072445475
0.03125	1.142474293088148e-05	0.0001150303858377088	0.0008493687641849691
}
\pgfplotstabletypeset[
    ConditionStyle,
    every head row/.style={before row={ \multicolumn{7}{c}{$\beta = 2$} \\}, after row=\midrule},
]{
h	k_a	k_a1	k_a0
0.5	0.000622730545026675	0.020317687019874978	0.10631967314009769
0.25	0.00010633318019992562	0.005604012360017594	0.03529481432533098
0.125	1.6103196747576793e-05	0.0015926783490645307	0.010385339571664551
0.0625	2.2439022192891456e-06	0.0004361437155001302	0.002838002506999782
0.03125	2.972915982285604e-07	0.0001150303858377088	0.0007434754393226912
}
\pgfplotstabletypeset[
    ConditionStyle,
    every head row/.style={before row={ \multicolumn{7}{c}{$\beta = 4$} \\}, after row=\midrule},
]{
h	k_a	k_a1	k_a0
0.5	9.505724002822431e-05	0.020317687019874978	0.06688751036414015
0.25	4.148754036639504e-06	0.005604012360017594	0.023464310913774485
0.125	1.5824140125222578e-07	0.0015926783490645307	0.0070872013858186925
0.0625	5.524068133561655e-09	0.0004361437155001302	0.001959876641678632
0.03125	1.8306241074931717e-10	0.0001150303858377088	0.0005162693471985208
}
    \caption{Condition numbers $\kappa(\mathbb{A}), \kappa(\mathbb{A}^0)$ and $\kappa(\mathbb{A}^1)$ obtained with a structured triangulation of the domain $\Omega = (0,1)^2$ with varying mesh sizes $h$.}
    \label{tab::condition}
\end{table}

\subsection{Smooth solution} \label{sec:smooth_solution}
Let $\Omega = (-1,1) \times (0,1)$ and the parameters be set to $\epsilon = 10^{-5}$ and $\mu = 1$. Further let $f$ be defined such that the exact solution of \eqref{EQ:cont_prob} is given by 
\begin{align*}
    u(x,y) = \sin( \pi (x+1)/2) \cdot \sin(\pi y).
\end{align*}
i.e., we have $a = 0$ and $b =1$. In the following, we investigate the convergence of the method on a sequence of (nested) unstructured triangulations $\mathcal{T}_h$. As suggested by the theory, we choose $\beta = 4$ (for the first two test cases) and the tolerance of the outer loop is $tol_m = 10^{-12}$. Further, following \cite{BarrenecheaGPV23}, we choose the damping parameter $\omega = 0.5$ and set $\alpha = 1$ in \eqref{EQ:stabilizer}.

\paragraph{Convergence of the method and choice of $tol_n$:}
In Table \ref{tab::convergence_smooth_ex}, we present the results for the convergence of the method for different tolerances of the inner loop and the penalty factor $\gamma = 10$. We observe that the method converges for a wide range of tolerances of the inner loop, and the number of (outer) iterations $it$ is only mildly influenced by the choice of $tol_{n}$. Further, we see that the estimated order of convergence $eoc$ of the errors is optimal, i.e., we observe a quadratic rate for the error $\| u - u_h^{+}\|_{0,\Omega}$, and a linear rate for $\| \nabla (u - u_h^{+})\|_{0,\Omega}$. Note, that the $H^1$-error only considers the linear part in $V_h^1$ but not the constant part in $V_h^0$, i.e. $\| \nabla (u - u_h^{+})\|_{0,\Omega} = \| \nabla (u - (u_h^{+})^1)\|_{0,\Omega}$. 

\begin{table}[]
    \centering
    \pgfplotstabletypeset[
    ErrorplotStyle,
    every head row/.style={before row={ \multicolumn{6}{c}{$tol_{n} = 10^{-3}$} \\}, after row=\midrule},
]{
EL	l2up	h1up_1	h1up_0	l2up_0	it
496	0.0053268353184241545	0.34409474456722167	9.070241571411266e-08	1.0707205156514822e-07	9
1984	0.0013025184662036147	0.1652503558560618	2.7570779752012828e-09	4.410883813075645e-09	8
7936	0.00032643545668526423	0.07809679219803074	9.849408847222093e-11	1.5750046148382107e-10	6
31744	8.368041236768351e-05	0.03763275371102328	5.098243405028739e-12	6.5947920408485055e-12	2
126976	2.1315599558076714e-05	0.01864114662150483	3.867097746367433e-13	2.0230820754225915e-13	1
}
\pgfplotstabletypeset[
    ErrorplotStyle,
    every head row/.style={before row={\\ \multicolumn{6}{c}{$tol_{n} = 10^{-6}$} \\}, after row=\midrule},
]{
EL	l2up	h1up_1	h1up_0	l2up_0	it
496	0.005326835318611587	0.34409474427806486	9.070241694248987e-08	1.0707206542709531e-07	5
1984	0.0013025184662751861	0.16525035579967276	2.7570779965971162e-09	4.410884117670592e-09	5
7936	0.00032643545669429276	0.07809679219428534	9.849408870586608e-11	1.5750047063341557e-10	4
31744	8.368041236768416e-05	0.03763275371102318	5.0982434050286986e-12	6.594792040823184e-12	2
126976	2.1315599558076755e-05	0.018641146621504782	3.8670977463672877e-13	2.023082075164672e-13	1
}
\pgfplotstabletypeset[
    ErrorplotStyle,
    every head row/.style={before row={ \\\multicolumn{6}{c}{$tol_{n} = 10^{-9}$} \\}, after row=\midrule},
]{
EL	l2up	h1up_1	h1up_0	l2up_0	it
496	0.005326835316342375	0.3440947477848743	9.070239453817463e-08	1.0707190943963934e-07	3
1984	0.0013025184044383193	0.16525040452562312	2.7570589150275445e-09	4.41064495692102e-09	2
7936	0.0003264350351540063	0.07809696772390866	9.848320215336036e-11	1.5708639949745168e-10	2
31744	8.368162752546712e-05	0.03763239667466656	5.095026597780015e-12	6.099647644328529e-12	2
126976	2.1315730492178288e-05	0.018641126646743073	3.867112312466749e-13	2.0760337417627505e-13	1
}
    \caption{Error convergence and number of iterations for the example of Section \ref{sec:smooth_solution} with $tol_m = 10^{-12}, \gamma = 10,  \beta = 4$ and varying tolerances $tol_n$.}
    \label{tab::convergence_smooth_ex}
\end{table}

\paragraph{The choice of $\gamma$:}In Figure \ref{fig::iterations_smooth_ex} we present the number of iterations for different penalty factors $\gamma$ and a tolerance of the inner loop set to $tol_n = 10^{-9}$. In the left plot, the numbers are given for $\epsilon = 10^{-5}$ and on the right for $\epsilon = 1$. We observe that the number of iterations is barely influenced by the choice of $\gamma$, even for a small $\epsilon$. To motivate this behavior, we recall the stabilizing term in the bilinear form $a_h(\cdot, \cdot)$ given by $\langle \gamma \frac{\epsilon+\mu h_\face^2}{h_\face^{\beta}} \jump{w_h},\jump{v_h} \rangle$. We see that even in the case of small $\epsilon$ and a coarse mesh, since $\mu = 1$ and $\beta \ge 4$, we get a relatively big penalization of the jumps of the solution even for a moderate $\gamma$. This would not be the case (particularly for vanishing $\epsilon$) if we  use the penalty $\gamma \frac{\epsilon}{h_F^\beta}$ instead, see also Remark~\ref{rem::prop_EG}, point 5.

\pgfplotstableread[]
{
EL	l2up	h1up_1	h1up_0	l2up_0	it
496	0.005305843054010786	0.3443776611480764	0.00011761061101725302	0.0001106098514317654	9
1984	0.0012974470107324414	0.16533029683721984	2.8193932106892192e-05	2.1518615493459276e-05	9
7936	0.00032496475707489357	0.0781053585531581	8.23996573520508e-06	3.813666631694202e-06	10
31744	8.31066137845135e-05	0.03763308036145698	3.4743075091366835e-06	1.075577936351774e-06	11
126976	2.1013710062841414e-05	0.018641201404783158	2.117156493064559e-06	5.088962674515535e-07	13
}\gzeroXXboneXXefive 

\pgfplotstableread[]
{
EL	l2up	h1up_1	h1up_0	l2up_0	it
496	0.0053248171052219465	0.3441256262726605	1.1088574789612546e-05	1.5019892697678394e-05	7
1984	0.001302277579058789	0.16525589310076846	1.3276038746353872e-06	2.3770179175238547e-06	7
7936	0.0003264011216964642	0.07809729820744601	1.8808263332262926e-07	3.101918443822782e-07	7
31744	8.367509581627882e-05	0.03763240852771218	3.9023696342345235e-08	4.201279432464422e-08	7
126976	2.131402417622253e-05	0.018641125592097733	1.1840496825216057e-08	5.261720759038095e-09	7
}\gzeroXXbtwoXXefive 

\pgfplotstableread[]
{
EL	l2up	h1up_1	h1up_0	l2up_0	it
496	0.005326668960221333	0.3440971480586646	9.97172632102371e-07	1.2989940007443575e-06	4
1984	0.0013025079990931668	0.1652506332633215	6.013641955618179e-08	1.0753881655573759e-07	3
7936	0.0003264342924961023	0.07809697340619871	4.268723324237393e-09	7.654669129139836e-09	2
31744	8.368155502494644e-05	0.03763239682224326	4.416520485586122e-10	5.907844147208761e-10	2
126976	2.1315726110686563e-05	0.018641126028293948	6.699103389665889e-11	3.910660548702125e-11	2
} \gzeroXXbthreeXXefive 

\pgfplotstableread[]{
EL	l2up	h1up_1	h1up_0	l2up_0	it
496	0.005326835316342375	0.3440947477848743	9.070239453817463e-08	1.0707190943963934e-07	3
1984	0.0013025184044383193	0.16525040452562312	2.7570589150275445e-09	4.41064495692102e-09	2
7936	0.0003264350351540063	0.07809696772390866	9.848320215336036e-11	1.5708639949745168e-10	2
31744	8.368162752546712e-05	0.03763239667466656	5.095026597780015e-12	6.099647644328529e-12	2
126976	2.1315730492178288e-05	0.018641126646743073	3.867112312466749e-13	2.0760337417627505e-13	1
} \gzeroXXbfourXXefive 

\pgfplotstableread[]
{
EL	l2up	h1up_1	h1up_0	l2up_0	it
496	0.005326614407366669	0.34409860265206166	1.2999816595482066e-06	2.0140489806844766e-06	5
1984	0.0013024633926458494	0.16525187509710781	3.073259460131034e-07	6.575329583102086e-07	5
7936	0.00032641963509484894	0.07809714613643542	8.530307630622613e-08	1.7225173608699547e-07	6
31744	8.367573739511142e-05	0.03763240885687244	3.5152711725095925e-08	4.1932045827349826e-08	7
126976	2.1312654576055548e-05	0.01864112597606922	2.1299188446441313e-08	8.565014759335054e-09	9
} \gthreeXXboneXXefive 

\pgfplotstableread[]
{
EL	l2up	h1up_1	h1up_0	l2up_0	it
496	0.005326830859899114	0.3440948575211394	1.1247570785015739e-07	1.6163204819553413e-07	3
1984	0.0013025164373616012	0.1652504553532146	1.344868455234822e-08	2.6853687005103267e-08	2
7936	0.0003264347113435074	0.07809697052546806	1.8919374465348097e-09	3.812530045128172e-09	2
31744	8.36815626852203e-05	0.037632396822524915	3.9077257073859035e-10	5.828599799447611e-10	2
126976	2.1315718533563136e-05	0.018641126031591106	1.1841931915571493e-10	7.467391436183456e-11	2
} \gthreeXXbtwoXXefive 

\pgfplotstableread[]
{
EL	l2up	h1up_1	h1up_0	l2up_0	it
496	0.005326849780272059	0.3440945526034218	9.9826031699179e-09	1.3069827279646687e-08	2
1984	0.0013025187811101246	0.16525039946547024	6.016763827786624e-10	1.0810992450712459e-09	2
7936	0.0003264350446368532	0.07809696766424262	4.269286852828116e-11	7.691188217197034e-11	2
31744	8.3681627617558e-05	0.03763239667463891	4.416603921067223e-12	5.932827994574468e-12	2
126976	2.1315730450111826e-05	0.018641126646764965	6.699108338331853e-13	3.9171557922348514e-13	1
}\gthreeXXbthreeXXefive 

\pgfplotstableread[]
{
    EL	l2up	h1up_1	h1up_0	l2up_0	it
496	0.005326851443059862	0.34409453422325903	9.0709375293755e-10	1.0712477652901434e-09	2
1984	0.0013025188843393788	0.16525039788330506	2.7571067446083893e-11	4.411573725100081e-11	2
7936	0.00032643505206510513	0.07809696760730542	9.848342245014887e-13	1.5710149681789225e-12	2
31744	8.368162397083477e-05	0.037632397647335426	5.095036444380154e-14	6.101367211868848e-14	1
126976	2.13157305461712e-05	0.018641126646718076	3.8671123296828095e-15	2.0760605001597222e-15	1
} \gthreeXXbfourXXefive 

\pgfplotstableread[]{
EL	l2up	h1up_1	h1up_0	l2up_0	it
496	0.005326851607066976	0.34409453012779634	9.070945371674064e-13	1.0712535743873182e-12	1
1984	0.0013025189028002986	0.16525038708286838	2.7571114206029987e-14	4.411635587434758e-14	1
7936	0.0003264350641741133	0.0780969626171449	9.848373479553545e-16	1.5711347205655317e-15	1
31744	8.368162397907765e-05	0.03763239764732014	5.095036452797281e-17	6.101369828332865e-17	1
126976	2.1315730546716357e-05	0.018641126646717975	3.867112329856302e-18	2.0760607693864844e-18	1
} \gsixXXbfourXXefive 

\pgfplotstableread[]
{
EL	l2up	h1up_1	h1up_0	l2up_0	it
496	0.005326851368404417	0.34409453565854253	1.302943704940873e-09	2.0339450162352084e-09	2
1984	0.001302518833969107	0.16525039886738385	3.0869912883507206e-10	6.79807292865062e-10	2
7936	0.00032643503681198047	0.07809696775628866	8.566384833545691e-11	1.929607482460396e-10	2
31744	8.368162242550677e-05	0.0376323966884866	3.521181320535499e-11	5.863577062976262e-11	2
126976	2.131573266400097e-05	0.018641126026040646	2.1303532929049236e-11	1.457616545869304e-11	2
} \gsixXXboneXXefive 

\pgfplotstableread[]
{
EL	l2up	h1up_1	h1up_0	l2up_0	it
496	0.005326851584947145	0.3440945328222438	1.1249353955327063e-10	1.6175810892454697e-10	2
1984	0.001302518886755417	0.16525039785743398	1.3450767252592217e-11	2.6889738587905978e-11	2
7936	0.00032643505189499235	0.0780969676090498	1.892088748478088e-12	3.821741643970442e-12	2
31744	8.368162391342365e-05	0.037632397647471394	3.907823943606156e-13	5.855061627880178e-13	1
126976	2.1315730529470512e-05	0.01864112664672681	1.184195309722162e-13	7.501021848680758e-14	1
}\gsixXXbtwoXXefive 

\pgfplotstableread[]
{
EL	l2up	h1up_1	h1up_0	l2up_0	it
496	0.005326851603821316	0.34409453259566375	9.982710468614053e-12	1.3070618998573991e-11	2
1984	0.00130251890269731	0.16525038708444745	6.016805944083034e-13	1.0811693162565102e-12	1
7936	0.0003264350641666911	0.07809696261720203	4.269309178317889e-14	7.692144776071556e-14	1
31744	8.368162397835466e-05	0.03763239764732168	4.416614385755408e-15	5.9345291380992136e-15	1
126976	2.1315730546619867e-05	0.018641126646717913	6.699108401367177e-16	3.9172610994735203e-16	1
} \gsixXXbthreeXXefive 

\pgfplotstableread[]{
EL	l2up	h1up_1	h1up_0	l2up_0	it
496	0.0126937306746898	0.30446750541519035	1.494045259450004e-05	8.434185087780678e-07	3
1984	0.003037188140250967	0.1506258431582411	9.084117778826116e-07	2.3176233680627144e-08	2
7936	0.0007438122037893364	0.07485974671974371	5.585371767233989e-08	6.723785665114485e-10	2
31744	0.00017905744941521147	0.03730705400864482	3.458253387417049e-09	1.98734284577646e-11	1
126976	4.399894694314133e-05	0.018621363605610487	2.1505902228202873e-10	6.048476158765675e-13	1
} \gzeroXXbfourXXeone 

\pgfplotstableread[]{
EL	l2up	h1up_1	h1up_0	l2up_0	it
496	0.012687426607833644	0.3044690399109952	0.00016158187547219438	9.863471432385207e-06	3
1984	0.003036780917040394	0.1506258829485671	1.95276599455428e-05	5.365940573656357e-07	3
7936	0.0007437849390504711	0.07485974778389029	2.3942633390857297e-06	3.090972075889927e-08	2
31744	0.00017905640827082934	0.0373070540291233	2.9602141605813714e-07	1.815231667096876e-09	2
126976	4.39988887186114e-05	0.018621363606097317	3.678678380932234e-08	1.1008542846599882e-10	1
} \gzeroXXbthreeXXeone 

\pgfplotstableread[]{
EL	l2up	h1up_1	h1up_0	l2up_0	it
496	0.012613855479333467	0.3044855405909922	0.0017813199450670938	0.00011681325908589382	10
1984	0.003027299812123496	0.15062670823464264	0.000428483968255375	1.2604241012880756e-05	6
7936	0.0007425603701787231	0.07485979103800926	0.00010485005545033115	1.4440803549177991e-06	4
31744	0.00017889882947250274	0.03730705591445376	2.5896772922916358e-05	1.6876155100990208e-07	3
126976	4.397519402768637e-05	0.01862136371320461	6.4323682403842315e-06	2.041230362246217e-08	2
} \gzeroXXbtwoXXeone 

\pgfplotstableread[]{
EL	l2up	h1up_1	h1up_0	l2up_0	it
496	0.01175555509673319	0.30468967309558564	0.02006859507647562	0.0014298032952053314	21
1984	0.0028083164776925343	0.15064667720995997	0.009588732364804514	0.0003055402975824401	18
7936	0.0006862040164611112	0.07486189996285077	0.004681391616217421	6.952427297458192e-05	15
31744	0.00016445125055642538	0.03730724260534213	0.00230975072040356	1.6133992720751997e-05	13
126976	3.9557765980401734e-05	0.018621384873099774	0.0011467069823175005	3.832813400336701e-06	12
} \gzeroXXboneXXeone 

\pgfplotstableread[]{
EL	l2up	h1up_1	h1up_0	l2up_0	it
496	0.01269431898252259	0.30446734867784325	1.4940436084374152e-07	8.434073041003379e-09	2
1984	0.0030372071706780634	0.15062584116810485	9.084117620071491e-09	2.3176209112224978e-10	2
7936	0.0007438122256535443	0.07485974669972724	5.58537177250883e-10	6.723781685612816e-12	1
31744	0.00017905746819060664	0.037307054008357696	3.4582533874712963e-11	1.9873428250502368e-13	1
126976	4.399894727480442e-05	0.01862136360560746	2.150590220654023e-12	6.048476118719538e-15	1
} \gthreeXXbfourXXeone 

\pgfplotstableread[]{
EL	l2up	h1up_1	h1up_0	l2up_0	it
496	0.01269432452130399	0.3044673471129565	1.4940435917805726e-10	8.434071744501803e-12	1
1984	0.0030372067161877248	0.1506258411610605	9.084117623273261e-12	2.3176205686832573e-13	1
7936	0.0007438122317050707	0.0748597466994675	5.585371772511307e-13	6.723781678620859e-15	1
31744	0.0001790574683793323	0.037307054008354684	3.458253387471853e-14	1.987342824829914e-16	1
126976	4.399894729120729e-05	0.018621363605607403	2.150590220632191e-15	6.048476119587548e-18	1
} \gsixXXbfourXXeone 

\begin{figure}
    \begin{center}
    \begin{tikzpicture}
    \begin{groupplot}[
        group style ={group size = 2 by 1,horizontal sep=50pt},
        width = 7cm,
        height = 6cm,
        grid = both,
        xlabel = {$|\mathcal{T}_h|$},
        legend style={at={(0.65,-0.35)},anchor=west, legend columns=6},
        scaled y ticks = false,
        y tick label style={/pgf/number format/fixed},
        xmode = log,
        cycle list name=philipcolors
        ]
        \nextgroupplot[title = {$\#its, \epsilon = 10^{-5}$}]
        \addplot table[x=EL, y=it] {\gzeroXXbfourXXefive};
        \addlegendentry{$\gamma = 10$}
        \addplot table[x=EL, y=it] {\gthreeXXbfourXXefive};
        \addlegendentry{$\gamma = 10^3$}
        \addplot table[x=EL, y=it] {\gsixXXbfourXXefive};
        \addlegendentry{$\gamma = 10^6$}

        \nextgroupplot[title = {$\#its, \epsilon = 1$}]
        \addplot table[x=EL, y=it] {\gzeroXXbfourXXeone};
        \addplot table[x=EL, y=it] {\gthreeXXbfourXXeone};
        \addplot table[x=EL, y=it] {\gsixXXbfourXXeone};


\end{groupplot}
\end{tikzpicture}
\end{center}
\caption{Number of iterations for the example of Section \ref{sec:smooth_solution} for different penalty factors $\gamma$ and with $tol_n = 10^{-9}$ and $tol_m = 10^{-12}$.}
\label{fig::iterations_smooth_ex}
\end{figure}

\begin{figure}
    \begin{center}
    \begin{tikzpicture}
    \begin{groupplot}[
        group style ={group size = 2 by 1,horizontal sep=50pt},
        width = 7cm,
        height = 6cm,
        grid = both,
        xlabel = {$|\mathcal{T}_h|$},
        legend style={at={(0.55,-0.35)},anchor=west, legend columns=6},
        scaled y ticks = false,
        y tick label style={/pgf/number format/fixed},
        xmode = log,
        cycle list name=philipcolors
        ]
        \nextgroupplot[title = {$\#its, \gamma = 10, \epsilon = 10^{-5}$}]
        \addplot table[x=EL, y=it] {\gzeroXXboneXXefive};
        \addlegendentry{$\beta = 1$}
        \addplot table[x=EL, y=it] {\gzeroXXbtwoXXefive};
        \addlegendentry{$\beta = 2$}
        \addplot table[x=EL, y=it] {\gzeroXXbthreeXXefive};
        \addlegendentry{$\beta = 3$}
        \addplot table[x=EL, y=it] {\gzeroXXbfourXXefive};
        \addlegendentry{$\beta = 4$}


        \nextgroupplot[title = {$\#its, \gamma = 10, \epsilon = 1$}]
        \addplot table[x=EL, y=it] {\gzeroXXboneXXeone};
        \addplot table[x=EL, y=it] {\gzeroXXbtwoXXeone};
        \addplot table[x=EL, y=it] {\gzeroXXbthreeXXeone};
        \addplot table[x=EL, y=it] {\gzeroXXbfourXXeone};

\end{groupplot}
\end{tikzpicture}
\end{center}
\caption{Number of iterations for the example of Section \ref{sec:smooth_solution} for different factors $\beta$ and with $tol_n = 10^{-9}$ and $tol_m = 10^{-12}$.}
\label{fig::iterations_smooth_ex_beta}
\end{figure}

\paragraph{The choice of $\beta$:} Although the theory requires that $\beta\ge 4$, we discuss in the following the convergence for different (smaller) choices $\beta = 2,3,4$. We choose $tol_n = 10^{-9}$ and set $\gamma = 10$.  Furthermore, in contrast to the previous example, we choose $\varepsilon=10^{-3}$, as our computations showed faster pre-asymptotic convergence of some errors for smaller choices. Consequently, we increased the diffusion coefficient to make the jumps more pronounced. In Table \ref{tab::beta_smooth_ex} we again present the error $\| \nabla (u - u_h^{+})\|_{L^2(\Omega)}$ but further present the values of the jump norm $\| \jump{u_h^{+}} \|_{\mathcal{F}_h} = \| \jump{(u_h^{+})^0}\|_{\mathcal{F}_h}$ and the $L^2$-norm $\| (u_h^{+})^0\|_{L^2(\Omega)}$. We observe that the error $\| \nabla (u - u_h^{+})\|_{L^2(\Omega)}$ is independent of the choice of $\beta$ and converges with the optimal order. Note that the same conclusions can be made for the error $\|  (u - u_h^{+})\|_{L^2(\Omega)}$, which is omitted here for brevity. The jump norm $\| \jump{u_h^{+}} \|_{\mathcal{F}_h}$ shows a faster convergence. This suggests that the convergence analysis presented in Corollary~\ref{Cor1} may be sharpened. More precisely, the numerical results indicate that \( \| \jump{u_h^{+}} \|_{\mathcal{F}_h} \) converges at a rate of order~$h^\beta$. As in the previous paragraph, this observation can be motivated by the fact that for $\beta = 1$, the penalty essentially reduces to a standard interior penalty method, for which linear convergence of order~$\mathcal{O}(h)$ is typically expected.

Since standard error estimates are often based on ellipticity with respect to $a_h$, as used in the first step of the proof of Lemma~\ref{LEM:stab2}, we can argue that for $\beta > 1$
\[
 \| \jump{u_h^{+}} \|_{\mathcal{F}_h}^2  = \| \jump{ u_h^0} \|_{\mathcal{F}_h}^2 \cdot \frac{h^{\beta-1}}{h^{\beta-1}} \lesssim h^{\beta-1} a_h(u_h^0, u_h^0) \lesssim h^{\beta-1} \cdot \mathcal{O}(h) = \mathcal{O}(h^\beta).
\]
In accordance to the discussions from the previous paragraph ("\textit{The choice of} $\gamma$"), Figure~\ref{fig::iterations_smooth_ex_beta} reports the number of outer iterations as a function of $\beta$ for a fixed $\gamma=10$. We make the following observations. First, in the right plot for $\epsilon=1$, all choices of $\beta$ provided a decreasing number of iterations with respect to the number of elements. Further, we see that the number of iterations on a fixed mesh also decreases as $\beta$ increases. This is explained by the stronger jump penalization for larger $\beta$, which drives the piecewise-constant component $u_h^0$ toward zero more rapidly (see Table \ref{tab::beta_smooth_ex}); hence the outer update (which corrects $u_h^0$) becomes smaller and fewer outer iterations are needed to meet the tolerance.\\
However, in the left plot for $\epsilon=10^{-5}$, we see that for $\beta = 1,2$ the iteration count does not decrease with respect to the number of elements, while for $\beta=3,4$ it does. Here we can clearly see the influence of $\gamma$ which has to chosen big enough (and obviously in dependence of $\epsilon$). Fortunately, as motivated by the theory, the choice $\beta = 4$ provides a robust number of iterations also for small $\epsilon$.

\pgfplotstableset{
ErrorplotStyleall/.style={
columns/EL/.style={column name = $|\mesh|$,int detect, precision = 0,column type = {r},set thousands separator={}},
    columns/l2up/.style={column name = {$\| u - u_h^{+}\|_{0,\Omega}$},  NUMStyle},
    columns/h1up_1/.style={column name = {$\| \nabla (u - u_h^{+})\|_{0, \Omega}$}, NUMStyle},
    columns/l2up_0/.style={column name = {$\| (u_h^{+})^0\|_{0, \Omega}$}, NUMStyle},
    columns/h1up_0/.style={column name = {$\| \jump{u_h^{+}}\|_{\mathcal{F}_h}$}, NUMStyle},
    create on use/eoc_l2up/.style={create col/expr={ln(\prevrow{l2up}/\thisrow{l2up})/ln(2)}},
    columns/eoc_l2up/.style={EOCStyle},
    create on use/eoc_h1up_1/.style={create col/expr={ln(\prevrow{h1up_1}/\thisrow{h1up_1})/ln(2)}},
    columns/eoc_h1up_1/.style={EOCStyle},
    create on use/eoc_h1up_0/.style={create col/expr={ln(\prevrow{h1up_0}/\thisrow{h1up_0})/ln(2)}},
    columns/eoc_h1up_0/.style={EOCStyle},
    create on use/eoc_l2up_0/.style={create col/expr={ln(\prevrow{l2up_0}/\thisrow{l2up_0})/ln(2)}},
    columns/eoc_l2up_0/.style={EOCStyle},
    columns={EL, h1up_1, eoc_h1up_1, l2up_0, eoc_l2up_0, h1up_0, eoc_h1up_0}, 
 every last row/.style={},
 clear infinite,
 empty cells with={\ensuremath{--}}
}
}

\begin{table}[]
    \centering
    \pgfplotstabletypeset[
    ErrorplotStyleall,
    every head row/.style={before row={ \multicolumn{7}{c}{$\beta=1$} \\},
    after row=\midrule},
]{
EL	l2up	h1up_1	h1up_0	l2up_0	it
496	0.005655261627930733	0.3119065043125357	0.000348948038822953	0.0003159912266682878	12
1984	0.0014105428245257425	0.15139721913341117	0.00020108768656974856	9.844871740083095e-05	19
7936	0.00034404890746174427	0.07494953031422179	0.0001246224710360672	2.8518246452422256e-05	23
31744	8.476270120847223e-05	0.0373171041243312	6.931705412622905e-05	8.191017353823746e-06	28
126976	2.105619255877519e-05	0.0186224039021743	3.5761193980029094e-05	2.1002015862357376e-06	23
}
\pgfplotstabletypeset[
    ErrorplotStyleall,
    every head row/.style={before row={\\ \multicolumn{7}{c}{$\beta = 2$} \\}, after row=\midrule},
]{
EL	l2up	h1up_1	h1up_0	l2up_0	it
496	0.0057181299669398535	0.311684296420849	3.226635484885639e-05	4.033549861434804e-05	8
1984	0.0014414299522002639	0.15135853602550653	9.098103820369704e-06	8.643209569302162e-06	11
7936	0.00035708190011123254	0.07494444992417253	2.794264941939642e-06	1.3554278496927352e-06	11
31744	8.886799373985763e-05	0.0373166147803779	7.771395957497758e-07	1.831764907835323e-07	7
126976	2.2153137616195464e-05	0.01862238774619563	2.005862075402643e-07	2.0284469486641067e-08	2
}
\pgfplotstabletypeset[
    ErrorplotStyleall,
    every head row/.style={before row={\\ \multicolumn{7}{c}{$\beta = 3$} \\}, after row=\midrule},
]{
EL	l2up	h1up_1	h1up_0	l2up_0	it
496	0.005724873050655195	0.3116611019776811	2.8784471716850823e-06	3.471729177173912e-06	3
1984	0.0014430802520916418	0.15135624380549242	4.101345403302213e-07	3.8165947848015005e-07	3
7936	0.00035742065269994954	0.07494429735296455	6.345396612456604e-08	3.008375157929989e-08	2
31744	8.892069930983287e-05	0.03731660807949101	8.865673292698793e-09	2.007459390639887e-09	2
126976	2.2159999139969726e-05	0.018622387705573902	1.1465192520963946e-09	1.1080135676709274e-10	2
}
\pgfplotstabletypeset[
    ErrorplotStyleall,
    every head row/.style={before row={ \\\multicolumn{7}{c}{$\beta = 4$} \\}, after row=\midrule},
]{
EL	l2up	h1up_1	h1up_0	l2up_0	it
496	0.005725471333065051	0.31165913769380266	2.5979717112130723e-07	2.870917969323652e-07	3
1984	0.001443154375819953	0.15135614316890336	1.880554916484644e-08	1.5903814563340148e-08	2
7936	0.0003574283532759761	0.07494429351412576	1.4704137413474487e-09	6.361545487696744e-10	2
31744	8.892129357559207e-05	0.037316607996808006	1.0333178135981105e-10	2.1530009506662544e-11	2
126976	2.216003354330325e-05	0.01862238771248032	6.698341573562167e-12	6.058654974314214e-13	1
}
\caption{Error convergence for the example of Section \ref{sec:smooth_solution} with the diffusion coefficient $\varepsilon = 10^{-3}$, penalty $\gamma = 10$ and tolerances $tol_n = 10^{-9}$ and $tol_m = 10^{-12}$ and varying $\beta = 1,2,3,4$.}
\label{tab::beta_smooth_ex}
\end{table}

\subsection{Interior layer example}\label{sec:interior_layer}

We consider the domain $\Omega = (0,1)^2$ and choose the parameters $\varepsilon = 10^{-7}$ and $\mu = 1$. This time, the right-hand side is given by 
\begin{align*}
    f = 
    \begin{cases}
        0 & \text{ in } [\tfrac{1}{4}, \tfrac{3}{4}]^2, \\
        1 & \text{ else},
    \end{cases}
\end{align*}
while we impose homogeneous Dirichlet boundary conditions on the whole boundary $\partial \Omega$. The exact solution of this problem is not known analytically. However, we know that the solution satisfies $a = 0 \le u \le 1 = b$ in $\Omega$.
Due to the discontinuous right-hand side and the small parameter $\varepsilon$ we expect that the solution will exhibit an interior layer. 

To investigate this in more detail, Figure~\ref{fig::interior_layer_3d} shows the solution of the standard EG method and the solution $u_h^+$ of our proposed method for different penalty parameters. Herein, we show results obtained using $\gamma = 10$, but larger values of $\gamma$ give very similar results. For the standard method, we directly solved the problem (i.e., without using an iterative scheme) and employed $\beta = 1$, $\gamma = 10$, and $\alpha = 0$ (no stabilization by \eqref{EQ:stabilizer}). The computations were made on a mesh with $|\mesh| = 242$ and the same tolerances and damping parameters as in the previous section. We observe that the solution of the standard EG method exhibits a highly oscillatory behavior in the interior layer and fails to preserve the limits. In fact, the values of the solution on the left range from approximately $-0.2$ to $1.2$, as can be seen in the left panel of Figure~\ref{fig::interior_layer_3d}. In contrast, the solution of our proposed method is bound-preserving. We want to emphasize that, although the solution appears to be approximated solely by the linear Lagrange Finite element - similarly to \cite{BarrenecheaGPV23} - our method is locally conservative since the piecewise constant part of $u_h^+$ is very small thanks to the over-penalization. 

One final point is that, since the constant part of $u_h^+$ is very small and the linear part strictly satisfying the homogeneous Dirichlet boundary conditions, the overall solution $u_h^+$ is very close to the boundary datum on the boundary. In contrast to that, the piecewise constants in the standard EG method (left panel) are not small conveying the impression that the standard EG method does not respect the Dirichlet boundary condition, as can be expected since it is only enforced via Nitsche's method (guaranteeing that the boundary condition holds strictly in the limit \( h \to 0 \)).

\begin{figure}
    \begin{center}
    \includegraphics[width=0.3\textwidth, clip = true, trim = 0 1cm 0 0]{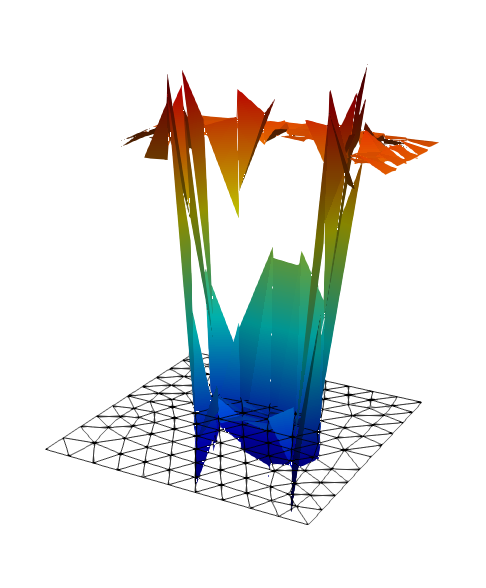}
    \includegraphics[width=0.3\textwidth, clip = true, trim = 0 1cm 0 0]{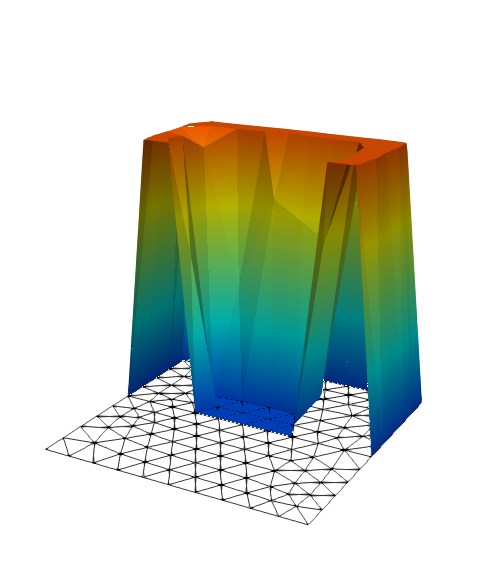}
    \begin{tikzpicture}
        \pgfplotscolorbardrawstandalone[colormap/jet,colorbar sampled,point meta min=-0.2,point meta max=1.2, 
            colorbar style={samples=22, width=0.2cm, height = 4cm, ytick={-0.2,0,0.2, 0.4, 0.6, 0.8, 1.0, 1.2}, 
            scaled y ticks=false, 
            yticklabel style={style={font=\footnotesize}, /pgf/number format/fixed,  /pgf/number format/precision=1}}];
    \end{tikzpicture}
\end{center}
\caption{Solutions of the interior layer example of Section \ref{sec:interior_layer} for the standard EG method (left) and the proposed method with $\gamma = 10$ (right). For better visualization, the solution on the partial domain $[0,1] \times [0.5,1]$ is shown here.}
 \label{fig::interior_layer_3d}   
\end{figure}

\section{Conclusion}
%
We have proposed a bound-preserving EG method whose solution is locally (and globally) conservative. A fundamental tool to achieve this is the way we have built the limiting process, by leaving the piecewise constant part free, and limiting the piecewise linear part in such a way that the sum respects the bounds given by the continuous problem. As a result, the stabilization needed to compensate for the ``unconstrained'' part of the solution does not depend on the piecewise constant part of it, which implies conservation. In addition, it is important to insist on the fact that, although for the analysis a variational inequality was used, the method itself is not equivalent to a variational inequality (unlike \cite{BarrenecheaGPV23,AmiriBP23,BPT25}). The method is proven to approximate smooth solutions with optimal convergence rates if the analytical solution respects pre-defined upper and lower bounds. If the solution is not smooth, our approach still respects the upper and lower bounds, is mass conservative, and convergent, but the significance of the penalty parameter $\gamma$ increases: a larger $\gamma$ suppresses oscillations stronger than a smaller $\gamma$. However, the nonlinear problem remains solvable independent of our tuning parameters $\gamma$ and $\beta$ (which we fix to $4$). In addition, the use of the splitting algorithm presented in Section \ref{SEC:algo}  allows us to completely bypass the ill-conditioning of the linear systems that would arise were \eqref{EQ:stab_prob} to be solved in a monolithic way.

Notably, our analysis requires \( \beta \ge 2 \) for the existence of discrete solutions and \( \beta \ge 4 \) for optimal convergence rates. However, numerical experiments indicate that the method performs well even for \( \beta = 1 \), which is commonly used in interior penalty methods. This suggests that the theoretical requirements on \( \beta \) may be too restrictive, and further investigation into this aspect could be a valuable direction for future research. In addition, the following problems are also open at the moment, and investigation on them is ongoing:
\begin{itemize}
 \item An adaptive strategy to select $\beta$ and $\gamma$ in the spirit of Section \ref{SEC:positivity}.
 \item The consideration of non-linear and hyperbolic equations.
 \item The open problem whether the super-convergence of the $L^2$ error can be explained by a more involved proof strategy that allows for mimicking the Aubin--Nitsche trick.
 \item The proposal of, possibly a posteriori, strategies to fix the stopping criteria for the nested fixed-point iterations in an adaptive way, with the aim of reducing computational complexity.
\end{itemize}


\bibliographystyle{siamplain}
\bibliography{references,references_gpt}

\end{document}